\documentclass[12pt]{amsart}
\usepackage{amsmath, amsthm, amssymb, amscd}
\usepackage[dvipsnames]{xcolor}

\usepackage[titletoc,toc, title]{appendix}

\usepackage{mathtools}

\usepackage{hyperref}
\usepackage{cleveref}
\usepackage{mabliautoref}

\usepackage{geometry}
\DeclareMathOperator{\rank}{rank}
\DeclareMathOperator{\ncr}{nc-rank}
\DeclareMathOperator{\Hom}{Hom}

\DeclareMathOperator{\Ann}{Ann}

\DeclareMathOperator{\Spec}{Spec}
\DeclareMathOperator{\Supp}{Supp}

\DeclareMathOperator{\Mat}{Mat}

\DeclareMathOperator{\Min}{Min}
\DeclareMathOperator{\Minh}{Minh}

\DeclareMathOperator{\eh}{e}
\DeclareMathOperator{\FL}{FL}
\DeclareMathOperator{\GL}{GL}

\DeclareMathOperator{\fsig}{s} 
\DeclareMathOperator{\dsig}{s_{dual}} 
\newcommand{\Zsig}[1]{\operatorname{s_{Tr}^{#1}}}

\newcommand{\gdsig}[2]{\fsig_{\mathrm{surj}}(#1 \twoheadrightarrow #2)} 
\newcommand{\basegdsig}[3]{\fsig^{#3}_{\mathrm{surj}}(#1 \twoheadrightarrow #2)} 
\newcommand{\asn}[2]{\operatorname{asn}(#1 \twoheadrightarrow #2)}

\newcommand{\red}[1]{{#1}_{\textup{red}}}

\newcommand{\cotimes}[1]{%
  \mathbin{\widehat{\mathop{\otimes}_{#1}}}%
}

\newcommand{\RR}{\mathbb{R}}
\newcommand{\CC}{\mathbb{C}}
\newcommand{\ZZ}{\mathbb{Z}}

\newcommand{\length}{\ell}

\newcommand{\Z}{\mathcal{Z}}

\newcommand{\ehk}{\eh_{HK}}
\newcommand{\mf}{\mathfrak}

\newcommand{\ul}[1]{\underline{#1}}

\newcommand{\blank}{\ul{\;\;\;}}

\newcommand{\m}{{\mathfrak m}}
\newcommand{\n}{{\mathfrak n}}
\newcommand{\p}{{\mathfrak p}}
\newcommand{\q}{{\mathfrak q}}

\newtheorem*{statement*}{Statement}
\newtheorem*{fact*}{Fact}
\numberwithin{equation}{theorem}

\begin{document}

\title[The asymptotic surjective number and  F-surjective signature]{The asymptotic surjective number \\and  F-surjective signature}

\author{Ilya Smirnov}
\address{BCAM -- Basque Center for Applied Mathematics, Mazarredo 14, 48009 Bilbao, Basque Country, Spain \quad and \quad
Ikerbasque -- Basque Foundation for Science, Plaza Euskadi 5, 48009 Bilbao, Basque Country, Spain}
\email{ismirnov@bcamath.org}

\author{Kevin Tucker}
\address{Department of Mathematics, University of Illinois at Chicago, Chicago, IL 60607, USA}
\email{kftucker@uic.edu}

\dedicatory{Dedicated to the memory of Professor Mitsuyasu Hashimoto}

\begin{abstract}
We study two invariants measuring the asymptotic abundance of surjections between modules.
We prove that Hashimoto's asymptotic surjective number is attained over local rings, hence it is a rational number. We also prove that its associated function is constructible in families and on spectra; on the latter, it is moreover lower semicontinuous on the support of the target.
We establish the existence of the $F$-surjective signature, a common generalization of $F$-signature and dual $F$-signature, and prove that it is  superadditive in its source and varies lower semicontinuously across the spectrum under mild assumptions.
As applications, we obtain new rationality results for dual $F$-signature and prove a transformation inequality under finite maps, with consequences for quotient singularities and reduction modulo $p$.
\end{abstract}

\maketitle

\section*{Introduction}
\label{intro}
In the study of singularities of positive characteristic 
various classes of singularities are defined by 
requiring the Frobenius endomorphism to have special properties. This approach originates from the work of Kunz \cite{Kunz1}
characterizing regular rings by the flatness of Frobenius.
Within this program \emph{strongly F-regular} singularities emerged as the mildest class satisfying many desirable 
properties \cite{HochsterHuneke2}. 
For example, they are normal and Cohen--Macaulay, the torsion in the class group is finite \cite{PolstraGroup, Martin}, 
and the local {\'e}tale fundamental group is finite \cite{CRST}.  

An important tool for working with this class is \emph{F-signature}, 
an invariant of local rings $(R, \m,k)$ of positive characteristic $p$ 
that \emph{quantifies} strong F-regularity:
$\fsig(R) > 0$ precisely when $R$ is strongly F-regular \cite{AberbachLeuschke} and 
$\fsig(R) = 1$ if and only if $R$ is regular \cite{HunekeLeuschke}. 
If we assume that the Frobenius endomorphism $F \colon R \to R$ is a 
finite map (e.g., if $R$ is complete and its residue field is perfect), then
the F-signature of $R$ was defined in \cite{HunekeLeuschke} as
\[
\fsig(R) = \lim_{e \to \infty} \frac{\text{the maximal rank of a free summand of $F_*^e R$}}{[k:k^{p^e}]p^{e \dim R}},
\]
where the denominator equals $\rank F_*^e R$ when $R$ is a domain.
The origin of this definition is \cite{SmithVanDenBerghSimplicityOfDiff}
where more general limits of the form
\[
     \lim_{e \to \infty} \frac{\max \{n \mid N^{\oplus n} \text{ is a direct summand of $F_*^e M$}\}}{[k: k^{p^e}]p^{e \dim R}} \tag{$\star$}
\]
were considered in a special case where $M$ has the \emph{finite Frobenius representation type} (FFRT) property -- by definition, the property requires that the set of all indecomposable direct summands of $\{F_*^e M\}_{e \geq 1}$
is finite. It is not known whether the limit in ($\star$) exists in full generality,
the FFRT case was settled in \cite{SmithVanDenBerghSimplicityOfDiff, Yao} 
and existence for quotient singularities was investigated in \cite{HashimotoSymonds, HashimotoKobayashi}. However, when $N = R$ 
the limit was shown to exist in \cite{Tucker}. 

Following Sannai's approach in \cite{Sannai} to generalizing F-signature, we consider the invariant
\[
\gdsig{M}{N} = \limsup_{e \to \infty}\frac{\max \{n \mid \text{there exists a surjection } F_*^e M \twoheadrightarrow N^{\oplus n}\} }{[k: k^{p^e}] p^{e \dim R}}, 
\]
which we will call the \emph{F-surjective signature}
of $M$ onto $N$. We note that F-signature is the F-surjective signature of $R$ onto $R$ and by taking $M = N = \omega_R$, the 
dualizing module of a Cohen-Macaulay local ring $R$, 
we obtain Sannai's \emph{dual F-signature} 
$\dsig(R) = \gdsig{\omega_R}{\omega_R}$.
This invariant quantifies F-rational singularities \cite{Sannai, SmirnovTucker}, which are a class defined as a cohomological weakening of strong F-regularity \cite{FedderWatanabe, Velez}: while strong F-regularity requires that the map $R \to R^{1/p^e}$
given by $1 \mapsto x^{1/p^e}$ is \emph{pure} (universally injective) for any nonzero $x$ and $e \gg 0$, F-rationality requires the same map to be injective on local cohomology. Building on the preceding work of Hochster and Yao \cite{HochsterYao} that pursued a different approach,
\cite{Sannai} shows that $\dsig(R) > 0$ if and only if $R$
is F-rational, while the maximal value $1$ is taken if and only if $R$ is regular. 

In \cite{SmirnovTucker}, by reconciling the Hochster--Yao and Sannai's approaches, the authors showed that the limit in $\dsig(R)$ exists. 
The first principal result of this note is the existence of F-surjective signatures, $\gdsig{M}{N}$,  in full generality. 
\begin{theorem*}[]
Let $(R, \m, k)$ be an F-finite local ring with the residue field $k$ and $M, N$ be finitely generated $R$-modules with $N \neq 0$. 
Then the sequence 
\[
\frac{\max \{n \mid \text{there exists a surjection } F_*^e M \twoheadrightarrow N^{\oplus n}\} }{[k: k^{p^e}] p^{e \dim R}}
\]
converges, i.e., $\gdsig{M}{N}$ is well-defined. We further set $\gdsig{M}{0} = \infty$.

The function $M \mapsto \gdsig{M}{N}$
is superadditive along short exact sequences. Moreover, if $R$ is a domain and not a perfect field, then $\gdsig{M}{N} = \rank (M) \gdsig{R}{N}$
for any $R$-module $M$ of full support.
\end{theorem*}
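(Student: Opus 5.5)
The plan has three parts. First, re-express the numerator via Hashimoto's asymptotic surjective number. Then use the uniform convergence technique of \cite{Tucker} and \cite{SmirnovTucker} to get existence of the limit, superadditivity, and the rank formula.

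Write $a_e(M) = \max\{n \mid F^e_*M \twoheadrightarrow N^{\oplus n}\}$ and set $\gamma = \log_p[k:k^p]$, so that the normalizing factor is $p^{e(\dim R+\gamma)}$.

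\textbf{Key observation.} A surjection $L \twoheadrightarrow N^{\oplus n}$ exists exactly when $n$ surjections $L \to N$ are jointly surjective. By Nakayama, this can be tested after tensoring with $k$, i.e. on $\Hom(L,N) \to \Hom_k(L\otimes k, N\otimes k)$. Two consequences follow.
\begin{itemize}
\item $a_e$ is \emph{superadditive} on direct sums.
\item Along a short exact sequence $0\to M'\to M\to M''\to 0$ we have $a_e(M)\ge a_e(M'')$.
\end{itemize}
More importantly, we expect an estimate
\[
a_e(M)\ \ge\ a_e(M')+a_e(M'') - C\,p^{e(\dim R-1+\gamma)}.
\]
To prove it, I would lift a surjection from $F^e_*M'$ through $F^e_*M$. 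The obstruction lies in $\mathrm{Ext}^1$, and its contribution is bounded by the number of generators of $F^e_*$ of a module of smaller dimension. The cleanest route is to reduce to comparisons $M \subseteq M'$ with cokernel of dimension $<\dim R$, as in \cite{Tucker}.

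\textbf{Existence of the limit.} I would follow the strategy of \cite{SmirnovTucker} for the dual F-signature. Choose $F_*R \cong R^{\oplus p^{\gamma+\dim R}}\oplus T$ up to lower-dimensional error, which is possible when $R$ is a domain after localizing or filtering by prime cyclic modules. Applying this to $F^{e+1}_*M = F^e_*(F_*M)$ together with the estimate above gives
\[
\Big|\frac{a_{e+1}(M)}{p^{(e+1)(d+\gamma)}}-\frac{a_e(M)}{p^{e(d+\gamma)}}\Big|\le \frac{C}{p^e}.
\]
Hence the sequence is Cauchy. In the non-domain case, filter $R$ by prime cyclic modules $R/\mathfrak p$. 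The components with $\dim R/\mathfrak p<\dim R$ contribute $O(p^{e(d-1+\gamma)})$ and can be discarded, which reduces the problem to the minimal primes of maximal dimension.

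\textbf{Superadditivity and the rank formula.} Once the limit exists, superadditivity follows by passing the estimate to the limit. For the rank formula, take $R$ a domain, not a field, and $M$ of full support and rank $r$. There are exact sequences $0\to R^{r}\to M\to C\to 0$ and $0\to M\to R^{r}\oplus M_{\mathrm{tors}}\dots$; more simply, there are maps $R^r\hookrightarrow M/\text{torsion}\hookrightarrow R^r$ with cokernels of dimension $<d$. Monotonicity up to the lower-order error gives $\fsig(M)=\fsig(R^r)=r\,\fsig(R)$, where torsion contributes zero because its dimension is smaller. The hypothesis that $R$ is not a perfect field is needed because otherwise dimension-zero torsion, such as $k$ itself, could contribute nontrivially. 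Two cases remain: for a non-perfect field $R=k$, $[k:k^{p^e}]\to\infty$ handles it, and for $\dim R\ge1$ the torsion is negligible.

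\textbf{Main obstacle.} I expect the main difficulty to be the uniform lower-order estimate for $a_e$ across short exact sequences. Surjections onto $N^{\oplus n}$ do not lift freely from a submodule. The bound on this error by $\mu(F^e_*(\text{lower-dim module}))$ must be checked carefully, probably by using that $\mathrm{Ext}^1(F^e_*C,N)$ is killed by a fixed element $c$ of the annihilator of $C$ and then using $F^e_*(cM)$ tricks as in \cite{Tucker}.
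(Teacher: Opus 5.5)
\textbf{There is a genuine gap.} Your argument needs $a_e$ to be asymptotically \emph{additive} on direct sums, but everything you establish only gives $a_e(M^{\oplus r})\ge r\,a_e(M)$. Write $s(M)=\gdsig{M}{N}$ and $\delta=\delta(R)$. Your two-sided Cauchy estimate, after comparing $F_*M$ with $M^{\oplus\delta}$, amounts to $a_e(M^{\oplus\delta})\le \delta\,a_e(M)+C\delta^e p^{-e}$. The step $s(R^{\oplus r})=r\,s(R)$ in your rank formula is the same statement in the limit. Neither is justified, and neither is formal: a surjection $F^e_*M^{\oplus r}\twoheadrightarrow N^{\oplus n}$ need not be assembled from surjections out of the individual summands. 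Over a perfect field, for instance, $\gdsig{k}{k^{\oplus 2}}=0$ while $\gdsig{k^{\oplus 2}}{k^{\oplus 2}}=1$ (Remark~\ref{rem: signature via asn}). This failure of additivity, not torsion (a field has none), is why perfect fields are excluded. Your filtration by $R/\p$ runs into the same problem.

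The ingredient the paper uses here is the submodules $\Z_e^M(V)$. The quantity $\dim_k(F^e_*M/\Z_e^M(V))$ is additive in $M$ (Proposition~\ref{prop: basic property Z}) and stable under generic isomorphism up to $O(p^{e(\dim R-1)}[k:k^{p^e}])$ (Lemma~\ref{lem: comparison generically equal}). Corollary~\ref{cor: Z formula for surjections}, which rests on Theorem~\ref{thm:linalgsurjections} and \cite[Corollary~A.12]{SmirnovTucker}, shows that $a_e$ differs from $\inf_V \dim_k(F^e_*M/\Z_e^M(V))/\dim_k(N/V)$ by at most the $e$-independent constant $(\dim_k N/\m N)^4$. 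Existence, invariance under generic isomorphism, and additivity on direct sums (hence superadditivity and the rank formula) all follow from this.

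Within your own framework the gap can in fact be closed without that linear algebra.

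\emph{One-sided comparison.} If $\beta\colon X\to Y$ has cokernel $T$, compose a surjection $F^e_*Y\twoheadrightarrow N^{\oplus n}$ with $F^e_*\beta$. The cokernel of the composite is a quotient of $F^e_*T$, so dropping at most $\mu(F^e_*T)$ coordinates makes the map surjective modulo $\m$, hence surjective by Nakayama. This gives $a_e(X)\ge a_e(Y)-\mu(F^e_*T)$, with no $\mathrm{Ext}^1$ lifting needed.

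\emph{Existence.} Once $M$ is an $\red{R}$-module, there is a generic isomorphism $F_*M\to M^{\oplus\delta}$. Applying the comparison to it yields $x_{e+1}\ge x_e-Cp^{-e}$ for $x_e=a_e(M)/\delta^e$. Since $x_e$ is bounded by the normalized Hilbert--Kunz function, and an almost nondecreasing bounded sequence with summable defects converges, the limit exists.

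\emph{Additivity.} Comparing in both directions gives $s(M^{\oplus\delta})=s(F_*M)=\delta\,s(M)$, hence $s(M^{\oplus\delta^j})=\delta^j s(M)$. Superadditivity then gives $\delta^j s(M)\ge s(M^{\oplus r})+(\delta^j-r)s(M)$, which forces $s(M^{\oplus r})=r\,s(M)$ as soon as $\delta>1$. This is exactly where the hypothesis ``not a perfect field'' enters.

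\emph{Short exact sequences.} Use that $F^{e_0}_*$ of the sequence splits at every $\p\in\Minh(R)$ for $e_0\gg 0$. The sequence itself need not split there when $R$ is not reduced.

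What the paper's route buys beyond this is the formula $\gdsig{M}{N}=\inf_V \Zsig{M}(V)/\dim_k(N/V)$, a two-sided rate $Cp^{-e}$, and the link with $\asn{F^e_*M}{N}$ used later.
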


This rank formula over a domain, \emph{source independence},  was shown in the case $N = R$ in \cite{Tucker}. 
Likewise, our result has several consequences for the dual F-signature, 
but let us first discuss its proof.  
The proof builds on the results of \cite{SmirnovTucker} and utilizes an analogue of Frobenius splitting ideals for the dual F-signature. 
We show that the surjective F-signatures can be similarly recovered from 
a generalization of Frobenius splitting ideals, 
generalizing an equivalent definition of F-signature originating from \cite{Tucker, Yao, AberbachEnescu2}. The second 
ingredient is another asymptotic invariant introduced by Hashimoto in \cite{HashimotoDual} for the purpose of computing the dual F-signature of invariant subrings. 

\begin{definition*}[\cite{HashimotoDual}]\label{def: asn}
Let $R$ be a ring and $M, N$ be finitely generated $R$-modules with $N \neq 0$. 
The \emph{asymptotic surjective number} of $M$ onto $N$ is defined by setting
\[
\asn {M}{N} = 
\lim_{m \to \infty}
\frac{\max \{n \mid \text{there exists a surjection }  M^{\oplus m} \twoheadrightarrow N^{\oplus n}\}}{m}.
\]
It is often convenient to set $\asn {M}{0} = \infty$. 
\end{definition*}

Due to the superadditivity of the sequence, 
this limit is in fact the supremum.
We use the linear algebra machinery of \cite{SmirnovTucker} to obtain 
a stabilization result 
for the \emph{asymptotic surjective number} and a number of consequences 
of this fact and of the machinery developed.

\begin{theorem*}[{Theorem~\ref{thm: asn is achieved}, Propositions~\ref{prop: asn semi},~\ref{prop: asn under flat extension}}]
    Let $R$ be a Noetherian ring and $M, N$ be finitely generated $R$-modules. Then
    \begin{enumerate}
        \item if $R$ is local and $N \neq 0$, then $\asn{M}{N}$ is achieved at a finite $m$;
        \item if $R$ is local and $R \to S$ is a flat local map, 
        then $\asn{M}{N} = \asn {M \otimes_R S}{N \otimes_R S}$;
        \item if $\dim R < \infty$ and $N \neq 0$, then $\asn{M}{N}$ is a rational number;
        \item $\Spec (R) \ni \p \mapsto \asn {M_\p}{N_\p}$ is constructible and lower semicontinuous on $\Supp (N)$.
    \end{enumerate}
\end{theorem*}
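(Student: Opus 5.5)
The plan is to reduce everything to linear algebra over the residue field, then invoke the noncommutative-rank machinery of \cite{SmirnovTucker}.

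\textbf{Reduction to linear algebra (local case).} Let $(R,\m,k)$ be local, $a=\dim_k M/\m M$ and $b=\dim_k N/\m N$. By Nakayama's lemma, a map $\varphi\colon M^{\oplus m}\to N^{\oplus n}$ is surjective if and only if the composite $M^{\oplus m}\to (N/\m N)^{\oplus n}$ is surjective. This composite factors through $(M/\m M)^{\oplus m}$. Let $V\subseteq \Hom_k(M/\m M,N/\m N)$ be the image of $\Hom_R(M,N)$. Then a surjection $M^{\oplus m}\twoheadrightarrow N^{\oplus n}$ exists exactly when some $n\times m$ block matrix with all blocks in $V$ has full rank $nb$ as a map $k^{ma}\to k^{nb}$. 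So $\asn{M}{N}$ is the asymptotic "rectangular blow-up rank" ratio of the matrix space $V$.

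\textbf{Part (1).} I expect a Hall/shrunk-subspace formula of Fortin--Reutenauer type:
\[
\asn{M}{N}=\min_{0\neq W\subseteq (N/\m N)^*}\frac{\dim_k V^{*}(W)}{\dim_k W},
\]
where $V^*(W)\subseteq (M/\m M)^*$ is the span of the transposes of elements of $V$ applied to $W$.
\begin{itemize}
\item The inequality $\le$ is elementary. A surjective block matrix forces $m\dim V^*(W)\ge n\dim W$ for every $W$, by applying it to $W^{\oplus n}$.
\item The inequality $\ge$ with exact attainment at a finite $m$ is the substantive step. Here I would use the regularity lemma and blow-up stabilization results from \cite{SmirnovTucker}, following Ivanyos--Qiao--Subrahmanyam and Derksen--Makam. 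These show that for $m$, $n$ divisible enough, the blow-up actually attains the rank predicted by the noncommutative rank. Those results need $k$ large, so I would first pass to a flat extension with infinite residue field using part (2).
\end{itemize}

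\textbf{Part (2).} For a flat local map $R\to S$ with residue field $l$, flatness gives $\Hom_S(M\otimes S,N\otimes S)=\Hom_R(M,N)\otimes_R S$. Hence the new matrix space is $V\otimes_k l$. The min formula is invariant under field extension, because the minimal $W$ can be chosen Galois-descended or defined by rank conditions over $k$; this gives the equality.

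\textbf{Parts (4) and (3).} For (4), I would use $\Hom_R(M,N)_\p=\Hom_{R_\p}(M_\p,N_\p)$.
\begin{itemize}
\item \emph{Lower semicontinuity.} A surjection $M_\p^{\oplus m}\twoheadrightarrow N_\p^{\oplus n}$ lifts to a map over some $R_f$. Its cokernel has closed support missing $\p$, so the same $n/m$ is achieved on an open neighborhood of $\p$. Combined with (1), this shows the value at nearby primes of $\Supp N$ is at least the value at $\p$.
\item \emph{Constructibility.} Over a stratification of $\Spec R$ on which $M$, $N$ and $\Hom(M,N)$ have free, compatibly generated fibres, the value depends only on the generic matrix space. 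Rank conditions in the formula are given by vanishing of minors, which are constructible conditions.
\end{itemize}
For (3), I would show that $\asn{M}{N}$ equals the minimum of the finitely many local values over $\Supp N$. The value is at most this minimum by localization. For the reverse, I would build a single global map that is surjective at every prime by a basic-element/general-position argument in the style of Eisenbud--Evans. Given $\dim R<\infty$, with $m$ large the defect at primes of each dimension can be absorbed. Rationality then follows from (1).

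I expect this global gluing in (3) to be the main obstacle, since a map that is surjective locally at each prime need not come from one global map. The first approach I would try is induction on the dimension of the stratification from (4), adding generic combinations of homomorphisms.
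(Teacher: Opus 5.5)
\textbf{The gap: finite residue fields in (1).} Your route for (1) is the paper's: Nakayama reduction to the matrix space $V$, the elementary inequality, then Fortin--Reutenauer plus a blow-up theorem for exact attainment. Your spreading argument for lower semicontinuity is also the paper's. The problem is how you treat a finite residue field $k$.

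Passing to a flat local extension $R\to S$ with infinite residue field $l$ and invoking (2) transfers only the \emph{value} of $\asn{M}{N}$. An optimal surjection over $S$ is not a surjection $M^{\oplus m}\twoheadrightarrow N^{\oplus n}$ over $R$, so attainment over $R$, which is the content of (1), is never obtained. The detour is also circular as written. Your proof of (2) uses $\asn{M}{N}=\min_W \dim V^*(W)/\dim W$ over $k$, and for finite $k$ the nontrivial inequality in that formula is exactly what the detour was supposed to provide.

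What is missing is a way to descend an actual surjective block matrix from an extension field to $k$. The regular representation does this. If $[k':k]=e<\infty$, then $k'\hookrightarrow\Mat_e(k)$ turns a surjective $n\times m$ block matrix with entries in $V\otimes_k k'$ into a surjective $ne\times me$ block matrix with entries in $V$; it is the same linear map, viewed over $k$. Hence the ratio $n/m$ is attained over $R$. Full nc-rank over $k\langle\langle t\rangle\rangle$ is preserved under the embedding into $\overline{k}\langle\langle t\rangle\rangle$ \cite[Theorem~6.4.6]{Cohn}. So the blow-up theorem over the infinite field $\overline{k}$ gives an invertible blow-up defined over some finite extension $k'$, which then descends to $k$. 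With this, the blow-up step works over every field and no appeal to (2) is needed.

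\textbf{Smaller points.} For (2) you do not need invariance of the min formula under field extension. That is fortunate, because your Galois-descent justification does not cover the purely inseparable or transcendental residue field extensions that flat local maps can have. Write $\mu_k,\mu_l$ for the two minima. The inequality $\mu_l\le\mu_k$ is trivial (extend $W$ to $W\otimes_k l$). So once the min formula holds over $R$, the elementary inequality over $S$ gives $\asn{M\otimes_R S}{N\otimes_R S}\le\mu_l\le\mu_k=\asn{M}{N}$. The reverse inequality holds because surjections ascend. The paper argues via nc-rank and Cohn's embedding instead.

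For constructibility in (4), the stratification by minors is unnecessary. As phrased, it would also need the minimum over $\kappa(\p)$-rational subspaces to agree with the minimum over geometric points. Instead, a surjection at $\q$ localizes to one at any $\p\subseteq\q$, so the function decreases under specialization. Your spreading of an optimal surjection at $\p$ then makes it constant on a neighbourhood of $\p$ in $V(\p)$. Nagata's criterion gives constructibility and lower semicontinuity at once; this is the paper's argument.

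For (3), the gluing step you flag is supplied in the paper by \cite[Theorem~3.9]{Baidya}. Put $\gamma_m(\p)=\max\{n\mid M_\p^{\oplus m}\twoheadrightarrow N_\p^{\oplus n}\}$. Then the global maximum is at least $\inf_\p\gamma_m(\p)-\dim R$, a defect independent of $m$. The paper swaps the limit in $m$ with the infimum using the uniform bound of Corollary~\ref{cor: asn convergence}. Alternatively, lower semicontinuity with attainment and quasi-compactness of $\Supp(N)$ give a common $m$ with $\gamma_m(\p)\ge m\cdot\min_{\q\in\Supp(N)}\asn{M_\q}{N_\q}$ for all $\p$.
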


Extending Hashimoto's observation, 
we express the dual F-signature as an asymptotic surjective number in rings that satisfy the FFRT property. 
In particular, the dual F-signature is a rational number 
for module-finite subrings of rings with finite Frobenius representation type (FFRT), for example, non-modular quotient singularities. Together with toric singularities, which were treated in \cite{SmirnovTucker}, 
these give essentially all cases where it is known that F-signature is rational. Second, we give a lower bound for the dual F-signature of 
module-finite subrings of regular rings. This result extends the work of Sannai \cite{Sannai} and Hashimoto \cite{HashimotoDual} where it was shown that the dual F-signature of an F-regular quotient singularity is at least $1/|G|$. Third, we give an inequality bounding dual F-signature in finite extension, correcting a proof in \cite{SmirnovTucker}.
Last, it can be used to simplify the known computations of the dual F-signature \cite{Veronese, Nakajima1, Nakajima2} by removing the need for obtaining an explicit decomposition of $F_*^e \omega_R$ as a direct sum of indecomposable modules.

The stabilization result will also be used in the forthcoming work \cite{MukhopadhyaySmirnov} to 
reprove \cite{Yao}, where it was shown that F-splitting signatures of $N$ in $M$ exist when $M$ has the FFRT property, and to show that the dual F-signature defines a constructible function on the spectrum of a ring satisfying the FFRT property. 

\subsection*{Acknowledgements}
\begin{itemize}
    \item We thank Anurag Singh for discussions and 
Junquan Qin for bringing our attention to the gap in 
the proof of \cite[Proposition~3.19]{SmirnovTucker}.
    \item The first-named author was supported by the Spanish Ministry of Science, Innovation and Universities by grants PCI2024-155055-2 funded by MICIU/AEI/10.13039/501100011033 and UE, PID2024-156181NB-C31 funded by MICIU/AEI/10.13039/501100011033 and \\ FEDER, UE, RYC2020-028976-I funded by MICIU/AEI/10.13039/501100011033 and EI ESF "ESF Investing in your future”.    
    \item The second author was partially supported by
NSF Grant \#2501904 and Simons Foundation Travel Support for Mathematicians SFI-MPS-TSM-00014083.
    \item Most of this work was completed in the pre-AI era, with an important exception of Theorem~\ref{thm: matrix blowups give injections}: the connection to \cite{DerksenMakam} was suggested by  an early version of Gemini and was shaped into a complete proof by the authors.  ChatGPT was used extensively in the final stages of manuscript preparation, to help refine arguments and examples (in particular, examples in Remarks~\ref{asn not semicontinuous}, \ref{rem: signature via asn} and Example~\ref{ex:counterexample} were suggested by ChatGPT) and improve exposition.   We take full responsibility for the mathematical content and final presentation of the manuscript.
\end{itemize}

\section{Preliminaries}

A Noetherian ring $R$ is \emph{semi-local} if it has finitely many maximal ideals. 
A \emph{local} ring $(R, \m)$ is a Noetherian commutative ring such that $\m$ is the only maximal ideal of $R$. Occasionally, we will also use notation $(R, \m, k)$ to emphasize the residue field $k = R/\m$.
For a prime ideal $\p \subset R$ we will denote  
$\kappa (\p) = R_\p/\p R_\p$. 

\subsection{Frobenius basics}

\begin{definition}
    Let $R$ be a ring of characteristic $p > 0$ and $F \colon R \to R$ 
    be the Frobenius endomorphism.
    We say that $R$ is F-finite if $F_* R$ is a finitely generated $R$-module.
\end{definition}

By \cite{Gabber} a Cohen-Macaulay F-finite ring 
has  a dualizing module. 

\begin{definition}
    Let $R$ be a reduced Noetherian ring and $M$ be a finitely generated $R$-module. 
    We say that $M$ has generic rank $r$ if
    $\dim_{\kappa(\mf p)} M_{\mf p} = r$ for every minimal prime ideal $\mf p$ such that $\dim R/\mf p = \dim R$.
\end{definition}

If $(R, \m, k)$ is an F-finite local ring \cite[Proposition~2.3]{Kunz2} 
asserts that, for any $\p \in \Spec(R)$,  $[\kappa(\p): \kappa(\p)^{p}] = p^{\dim (R/\p)} [k: k^p]$. 
In particular, if $R$ is reduced and $M$ has generic rank then 
\[
\rank (F_*^e M) = p^{e\dim (R)}[k : k^{p^e}] \rank (M).
\] 

\begin{definition}
    Let $(R, \m, k)$ be an F-finite local ring. 
    We will denote $\delta(R) \coloneqq [k : k^p]p^{\dim R}$.
\end{definition}

\subsubsection{F-rational signature}
Sannai's dual F-signature $\dsig(R) = \gdsig{\omega_R}{\omega_R}$ is defined only when $R$ is F-finite, however 
it can be extended by the (normalized) F-rational signature 
\cite{HochsterYao, SmirnovTucker}. Namely, whenever  $J$ is a parameter ideal 
in a F-finite Cohen-Macaulay local ring $(R, \m)$ we have 
by \cite[Corollary~5.6]{SmirnovTucker} that
\[
\dsig(R) = \min_{J \subsetneq I} \left\{\frac{\ehk(J) - \ehk(I)}{\length (I/J)} \right\},
\]
but the right-hand side is defined even 
if $R$ is non F-finite. 
Abusing the notation, we will use $\dsig(R)$ to denote the right-hand side even when $R$ is not F-finite. For non-Cohen-Macaulay local rings, we set $\dsig(R)=0$.
When $R$ is formally equidimensional, this agrees with the
displayed formula by \cite{SmirnovTucker}.

\subsubsection{Gamma-construction}
We will use the $\Gamma$-construction to reduce to the F-finite case, 
we refer to \cite[6.11]{HochsterHuneke2} and \cite{Lyu} for more details.
Let $k$ be a field of characteristic $p > 0$ and fix a $p$-basis $\Lambda$ of $k$. For any cofinite subset $\Gamma \subset \Lambda$, we may form a field
$k^\Gamma$ by adjoining all $p^{e}$-roots of all elements of $\Gamma$ to $k$. If $A = k[[x_1, \ldots, x_n]]$
and $A \subseteq R$ is a module-finite extension, 
then we define $\widehat{A^\Gamma} \coloneqq k^\Gamma[[x_1, \ldots, x_n]]$
and $\widehat{R^\Gamma} = R \cotimes{A} \widehat{A^\Gamma}$. 

\begin{theorem}[{\cite[Theorem~7.1]{Lyu}}]\label{thm: Lyu's gamma}
Let $(R, \m)$ be a complete local ring of characteristic $p > 0$
and $A \subseteq R$ be a complete regular local ring such that $R$ is a finitely generated $A$-module.
Then for any cofinite $\Gamma$ we have
$\dsig(\widehat{R^\Gamma}) \leq \dsig(R)$ and 
for any $\varepsilon > 0$ there exists a cofinite
$\Gamma_0 \subset \Lambda$ such that 
\[
\dsig(\widehat{R^\Gamma}) \leq \dsig(R)\leq \dsig(\widehat{R^\Gamma}) + \varepsilon
\]
for all cofinite subsets $\Gamma \subseteq \Gamma_0$. In particular, 
\(\dsig (R) = \sup_{\Gamma} \dsig(\widehat{R^\Gamma}).\)
\end{theorem}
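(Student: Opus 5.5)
The plan is to prove the two inequalities separately, working throughout with the description of $\dsig$ as a minimum of normalized differences of Hilbert--Kunz multiplicities,
\[
\dsig(R) = \min_{J \subsetneq I} \frac{\ehk(J) - \ehk(I)}{\length (I/J)},
\]
for a fixed parameter ideal $J$. I first record the basic properties of $R \to \widehat{R^\Gamma}$. It is flat and local, and its closed fiber is $R/\m \otimes_A \widehat{A^\Gamma} \cong (R/\m)\otimes_k k^\Gamma$, which is Artinian local because $k^\Gamma/k$ is purely inseparable. Put $c_\Gamma = \length(\widehat{R^\Gamma}/\m\widehat{R^\Gamma})$. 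Flatness gives $\length_{\widehat{R^\Gamma}}(L\otimes_R \widehat{R^\Gamma}) = c_\Gamma\, \length_R(L)$ for every finite-length $R$-module $L$. Two consequences follow. First, $\dim$, parameter ideals and the Cohen--Macaulay property transfer in both directions, so the non-Cohen--Macaulay case (where both sides are $0$) is trivial. Second, since Frobenius powers commute with extension, $\ehk(I\widehat{R^\Gamma}) = c_\Gamma\,\ehk(I)$ for every $\m$-primary $I$.

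\textbf{The inequality $\dsig(\widehat{R^\Gamma}) \le \dsig(R)$ for every cofinite $\Gamma$.} Take $J \subsetneq I$ in $R$ attaining the minimum. Faithful flatness gives $J\widehat{R^\Gamma} \subsetneq I\widehat{R^\Gamma}$. Both the numerator $\ehk(J)-\ehk(I)$ and the length $\length(I/J)$ are multiplied by the same factor $c_\Gamma$, so the ratio is unchanged. Since $\dsig(\widehat{R^\Gamma})$ is a minimum over a larger family of pairs, the inequality follows.

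\textbf{The approximation $\dsig(R) \le \dsig(\widehat{R^\Gamma}) + \varepsilon$ for $\Gamma \subseteq \Gamma_0$.} I would first replace the minimum over ideals $I$ by a finite-level expression. Following Hochster--Yao, fix $J$ and a socle representative $u$ of $R/J$. The F-rational signature is then a limit over $e$ of normalized lengths of the form
\[
\frac{1}{q^{\dim R}}\,\length\bigl(R/(J^{[q]} : \text{(test data)})\bigr),
\]
and the same formula, with the same $J$ and $u$ extended, computes $\dsig(\widehat{R^\Gamma})$. The argument then has two steps.

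First, I will show that the level-$e$ quantity depends only on finitely many elements of the $p$-basis $\Lambda$: the $p^e$-th roots relevant at level $e$ involve only finitely many $\lambda\in\Lambda$. So for each $e$ there is a cofinite $\Gamma_e$ such that, for $\Gamma \subseteq \Gamma_e$, the level-$e$ quantities of $R$ and of $\widehat{R^\Gamma}$ agree after the normalization by $c_\Gamma$. This is the standard mechanism behind Hochster--Huneke's use of the $\Gamma$-construction.

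Second, I need the level-$e$ quantity to converge to its limit at a rate that is independent of $\Gamma$, for instance an error $C/p^e$ with $C$ not depending on $\Gamma$. Given that, the argument finishes as follows: fix $e$ with $C/p^e < \varepsilon/2$, put $\Gamma_0 = \Gamma_e$, and compare $R$ and $\widehat{R^\Gamma}$ at level $e$. The final equality $\dsig(R) = \sup_\Gamma \dsig(\widehat{R^\Gamma})$ is then immediate from the two inequalities.

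\textbf{The main obstacle} is the uniform convergence rate. Existing uniform bounds (Tucker, Polstra--Tucker) use the F-finiteness of the ambient ring, and the constants typically depend on a number of generators of $F_*R$, which does not exist here. My first attempt is to bound everything relative to the module-finite extension $A \subseteq R$ rather than $R$ itself. I would choose a generically free presentation $A^{\oplus r} \hookrightarrow R$ whose cokernel is annihilated by a fixed nonzero $c\in A$. Such a presentation persists after base change to $\widehat{A^\Gamma}$, with the same $r$ and the same $c$. The error terms of the form $\length(\text{cokernel}/\text{Frobenius power})\le C q^{\dim R-1}$ can then be estimated over the regular rings $\widehat{A^\Gamma}$, where Frobenius is flat and these lengths are explicit. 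This should make $C$ independent of $\Gamma$.
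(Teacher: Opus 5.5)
Your proof of $\dsig(\widehat{R^\Gamma})\le\dsig(R)$ is correct and is the paper's argument: faithful flatness, with the Hilbert--Kunz drop and the colength both scaling by $c_\Gamma$. For the reverse approximation the paper does not argue directly. It invokes the argument of Lyu's Theorem~7.1 together with the invariance of $\dsig$ under completion. Your reconstruction of that part has a genuine gap.

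You describe $\dsig$ as a limit attached to a fixed $J$ and a fixed socle representative $u$, and assert that the same extended data computes $\dsig(\widehat{R^\Gamma})$. But $\dsig$ is a \emph{minimum over test data}, namely over all ideals $I\supsetneq J$. In $\widehat{R^\Gamma}$ this minimum runs over all ideals $I'\supsetneq J\widehat{R^\Gamma}$, for instance those coming from $k^\Gamma$-subspaces of $\operatorname{soc}(R/J)\otimes_k k^\Gamma$, and these are in general not extended from $R$. For extended data nothing depends on $\Gamma$. Frobenius powers and colons commute with flat base change, so the level-$e$ lengths over $\widehat{R^\Gamma}$ are exactly $c_\Gamma$ times those over $R$ for every $\Gamma$, and no $p$-basis bookkeeping is needed. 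Indeed, if your description were right you would get $\dsig(\widehat{R^\Gamma})=\dsig(R)$ for all $\Gamma$ outright, and Step~2 would be superfluous. So Step~1, as formulated, never touches the only possible source of a discrepancy.

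That source is the non-extended ideals $I'$, whose normalized Hilbert--Kunz drops might be smaller. At a fixed level $e$, the subspaces whose drop is at most a given bound form a locus cut out by rank conditions defined over $k$. A purely inseparable base change can create rational points on such a locus that do not exist over $k$. Compare the conic $x^2+ay^2+bz^2$ of Example~\ref{ex:counterexample}, which has no nontrivial $k$-points but many over $k(a^{1/2},b^{1/2})$. What must be shown is that, for each fixed $e$, one can shrink $\Gamma$ by removing finitely many elements of $\Lambda$ so that no such new points appear. Your sentence ``the $p^e$-th roots relevant at level $e$ involve only finitely many $\lambda$'' asserts this without proof, and it is not automatic.

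Step~2 is also open. It needs a rate $C/p^e$ that is uniform simultaneously in the test ideal $I'$ and in $\Gamma$, and you only offer a first attempt. In particular, you do not explain how an $A$-linear presentation $A^{\oplus r}\hookrightarrow R$ controls error terms built from Frobenius powers $(I')^{[q]}$ of $\widehat{R^\Gamma}$-ideals that are not extended from $\widehat{A^\Gamma}$. As it stands, the proposal establishes neither the inequality $\dsig(R)\le\dsig(\widehat{R^\Gamma})+\varepsilon$ nor the final equality $\dsig(R)=\sup_\Gamma\dsig(\widehat{R^\Gamma})$.
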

\begin{proof}
    Since $R \to \widehat{R^\Gamma}$ is faithfully flat, $\dsig(\widehat{R^\Gamma}) \leq \dsig(R)$. The second inequality follows from the argument of \cite[Theorem~7.1]{Lyu} and the invariance of $\dsig(R)$ under the completion. 
\end{proof}

\subsection{Skew fields}
In this section, rings are not necessarily commutative.

\subsubsection{Linear algebra over skew fields}
A \emph{skew field} (or a division ring) is a nonzero ring in which every nonzero element has a multiplicative inverse. 
A vector space $V$ over a skew field $F$ is a {\bf right} $F$-module.
The dimension of $V$ can be defined as the cardinality of a basis of $V$. 

\begin{definition}
    Let $A$ be a matrix over a skew field $F$. 
    The rank of $A$ is the dimension of the image of $A$ ($A$ acts on the left on the space of columns).
\end{definition}

\begin{proposition}[{e.g., \cite[7.2]{Hungerford}}]
    Let $A$ be an $n \times m$ matrix over a skew field $F$.
    The following are equal:
    \begin{enumerate}
        \item the dimension of the image of $A$ ($A$ acts on the left on the space of columns);
        \item the number of right-independent columns of $A$; 
        \item the minimal $r$ such that $A = P Q$ where $P$ is $n \times r$ and $Q$ is $r \times m$; 
        \item the integer $r$ such that 
        $P A Q = E_r$, where $P$ and $Q$ are invertible $n \times n$ 
        and $m \times m$ matrices and $E_r$ is an $n \times m$ matrix of the form
        $E_r = \begin{bmatrix}
            I_r & 0\\
            0 & 0
        \end{bmatrix}$, 
        where $I_r$ is the $r \times r$ identity matrix. 
    \end{enumerate}
    We will call this number the rank of $A$. 
\end{proposition}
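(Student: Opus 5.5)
The plan is to follow the classical Gaussian-elimination argument, written carefully so that scalars act on the right of column vectors and matrices act on the left; this is where the noncommutativity of $F$ could cause trouble. Let $V = F^m$ and $W = F^n$ be column spaces, both regarded as right $F$-vector spaces. Left multiplication by $A$ commutes with right scalar multiplication, so $A \colon V \to W$ is a right $F$-linear map. I will use two standard facts about vector spaces over a skew field: every vector space has a basis and dimension is well defined (the exchange lemma goes through verbatim); and rank–nullity holds, since a basis of $\ker A$ extends to a basis of $V$ and the new basis vectors map to a basis of the image.

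\textbf{(1)$=$(2).} The image of $A$ is the right span of its columns $A e_1, \dots, A e_m$, because $A(\sum e_j c_j) = \sum (A e_j) c_j$. A maximal right-independent subset of the columns is therefore a basis of the image.

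\textbf{(1)$=$(3).} Suppose $A = PQ$ with $P$ of size $n \times r$. Then $\operatorname{im} A \subseteq \operatorname{im} P$, and the latter has dimension at most $r$, so $r \ge \dim \operatorname{im} A$. For the reverse inequality, let $d = \dim \operatorname{im} A$ and let $P$ be the $n \times d$ matrix whose columns are a basis of the image. Each column $Ae_j$ can be written as $P q_j$ for a column $q_j \in F^d$. Taking $Q = [q_1 \cdots q_m]$ gives $A = PQ$.

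\textbf{(1)$=$(4).} Pick a basis $v_{r+1},\dots,v_m$ of $\ker A$ and extend it to a basis $v_1,\dots,v_m$ of $V$. The vectors $w_i = A v_i$ for $i \le r$ form a basis of $\operatorname{im} A$, so $r = \dim \operatorname{im} A$ by rank–nullity. Extend $w_1,\dots,w_r$ to a basis $w_1,\dots,w_n$ of $W$. Let $Q$ be the matrix with columns $v_i$ and $P'$ the matrix with columns $w_i$; both are invertible, since a square matrix whose columns form a basis has a two-sided inverse. Then $AQ = P' E_r$, and setting $P = P'^{-1}$ gives $PAQ = E_r$.

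It remains to check that the integer $r$ in (4) is well defined. If $PAQ = E_r$ with $P, Q$ invertible, then $\operatorname{im}(A) = P^{-1}\operatorname{im}(E_r Q^{-1}) = P^{-1}\operatorname{im}(E_r)$, and $\operatorname{im}(E_r)$ has dimension $r$. Hence $r = \dim \operatorname{im} A$.

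The only real subtlety, and the step to watch, is keeping the sides consistent throughout. Columns must be treated as right vector spaces so that matrices act $F$-linearly, and the inverse of a square matrix with a basis of columns must be two-sided. The latter follows because the corresponding linear map is bijective, so its inverse map is again given by a matrix.
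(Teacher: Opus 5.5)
Your proof is correct, but despite your opening sentence it is not a Gaussian-elimination argument, since you never perform row or column operations. What you actually give is the coordinate-free change-of-basis argument. You regard $F^m$ and $F^n$ as right vector spaces and use basis extension and rank--nullity to choose bases adapted to $\ker A$ and $\operatorname{im} A$. You then read off $PAQ=E_r$ from the matrices whose columns are those bases.

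The paper instead runs the Gauss algorithm. Its only noncommutative point is that row (resp.\ column) operations are left (resp.\ right) multiplication by elementary matrices, which are invertible by construction. This reaches the normal form $E_r$ directly and algorithmically, and the remaining identifications are left as ``as in the commutative case.''

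Your route gives a complete verification that handles explicitly two points the paper glosses over. First, you show the $r$ in (4) is independent of the choice of $P,Q$, via $\operatorname{im}A = P^{-1}\operatorname{im}E_r$. Second, you show that a square matrix whose columns form a basis has a two-sided inverse. The cost is that you rely on the basis theory of vector spaces over a skew field (well-defined dimension, extension of independent sets) rather than an explicit elimination procedure that also computes $P$ and $Q$.
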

\begin{proof}
    These are verified as in the commutative case, note that the row (resp., column) operations of the Gauss algorithm are given by left (resp., right) multiplication by elementary matrices and these matrices are invertible by the construction. 
\end{proof}

\subsubsection{The skew field of rational functions}
We will now introduce the free skew field, a non-commutative analog 
of the field of rational functions. 

\begin{definition}
    If $K$ is a field we will use $K \langle x_1, \ldots, x_n\rangle$ to 
    denote the ring of non-commutative polynomials
    on indeterminate variables $x_1, \ldots, x_n$.
\end{definition}

\begin{definition}
    Let $R$ be a non-commutative ring. A skew field $R \subseteq K$
    is called a \emph{skew field of fractions} of $R$ if $K$ is generated by $R$ as a skew field. 
    
    Furthermore, $K$ is called the \emph{universal skew field of fractions} of $R$ if for any skew field $L$ and any morphism of rings $\phi \colon R \to L$ there exists a subring $R \subseteq K_0 \subseteq K$ and a morphism of rings $\theta \colon K_0 \to L$ such that 
        $\phi$ is the restriction of $\theta$ to $R$ and 
        for every $x \in K_0$ the image $\theta(x)$ is nonzero if and only if $x$ is invertible in $K_0$. 
\end{definition}

The subtlety is that already $k \langle x, y \rangle$
admits non-isomorphic field of fractions. The universal skew field of fractions is the initial object in the category of skew fields of fractions of $R$ (i.e., skew fields $R \subseteq K$
with morphisms $K \to L$ given by a ring homomorphism $\beta \colon K_0 \to L$
from a subring $R \subseteq K_0 \subseteq K$ with the property that $\beta$ is the identity on $R$ and $\ker (\beta)$ is precisely the set of non-units of $K_0$). 

\begin{definition}
    Let $K$ be a field. 
    The free skew field $K \langle \langle x_1, \ldots, x_n\rangle\rangle$
    is the universal skew field of fractions of $K \langle x_1, \ldots, x_n\rangle$. 
\end{definition}

The existence of the free skew field was first shown by Amitsur \cite{Amitsur}. We refer to \cite{KVVBackground} for the explicit construction of $K \langle \langle x_1, \ldots, x_n\rangle\rangle$ and for an overview of other approaches. 

\subsubsection{Non-commutative rank of a linear matrix}

\begin{definition}
Let $F$ be a field and let $M_0, \ldots, M_k \in \Mat_{n \times m} (F)$, 
let $t_1, \ldots, t_k$ be formal indeterminates. 
We will call $M = M_0 + M_1t_1 + \cdots + M_kt_k$ a \emph{linear matrix}
associated to $M_0, \ldots, M_k$.  

The rank of $M$, $\rank (M)$, is the rank of $M$ as a matrix over $F(t_1, \ldots, t_k)$ and the non-commutative rank of $M$, $\ncr (M)$
is the rank of $M$ over $F \langle \langle t_1, \ldots, t_k\rangle\rangle$.
\end{definition}

The next result characterizes non-commutative rank through the \emph{tensor blow-up} where for a subspace $H \subseteq \Mat_{n \times m} (F)$
the subspace $H \otimes \Mat_d (F) \subseteq \Mat_{nd \times md}(F)$
consists of block matrices with blocks in $H$.

\begin{proposition}
    Let $F$ be an infinite\footnote{Strictly speaking, a large enough finite field will do.} field. Let $H = \langle M_1, \ldots, M_k\rangle$ be a subspace of $n \times m$ matrices over $F$ 
    and let $\mathcal{M} = \sum M_i t_i$ be the linear matrix associated to $H$. 
    Then 
    \[
    \rank (\mathcal{M}) = \max \{\rank (M) \mid M \in H \}, \text{ and }
    \] 
    \begin{align*}
    \ncr (\mathcal{M})  &= 
    \lim_{d \to \infty} \frac{1}{d}\max \{\rank (M) \mid M \in H \otimes \Mat_d (F)\} \\ &= 
    \max_{d \geq 1} \{\frac{1}{d}\max \left \{\rank (M) \mid M \in H \otimes \Mat_d (F)\} \right \}.
    \end{align*}
\end{proposition}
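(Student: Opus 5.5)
The first equality is a specialization statement, and I will prove it in two directions. Write $\mathcal{M}=\sum M_i t_i$ over $F(t_1,\dots,t_k)$. If $M=\sum c_iM_i\in H$ then $M$ is obtained from $\mathcal{M}$ by substituting $t_i\mapsto c_i$. An $r\times r$ minor of $\mathcal M$ specializes to the corresponding minor of $M$, so if all $r\times r$ minors of $\mathcal M$ vanish identically, so do those of $M$. This gives $\rank(M)\le\rank(\mathcal M)$.

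Conversely, let $r=\rank(\mathcal M)$. Some $r\times r$ minor of $\mathcal M$ is a nonzero polynomial in $t_1,\dots,t_k$. Since $F$ is infinite, that polynomial has a non-vanishing point $(c_i)\in F^k$, and then $M=\sum c_iM_i$ has rank at least $r$. A large finite field suffices here, because the minor has degree at most $r$.

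For the noncommutative rank I will use the tensor blow-up characterization of Derksen--Makam (together with the Cohn / Ivanyos--Qiao--Subrahmanyam description of $\ncr$), which I take as the input. Set
\[
r_d=\max\{\rank(M)\mid M\in H\otimes\Mat_d(F)\}.
\]
The plan has four steps.

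\textbf{Step 1: superadditivity.} Show $r_{d+d'}\ge r_d+r_{d'}$ by taking block-diagonal sums of elements of $H\otimes\Mat_d$ and $H\otimes\Mat_{d'}$; these lie in $H\otimes\Mat_{d+d'}$. By Fekete's lemma, $\lim_{d\to\infty} r_d/d$ exists and equals $\sup_d r_d/d$.

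\textbf{Step 2: upper bound.} Show $r_d\le d\cdot\ncr(\mathcal M)$. By the first part applied to the space $H\otimes\Mat_d$, the quantity $r_d$ is the commutative rank of the generic element $\sum_i M_i\otimes T_i$, where the $T_i$ are generic $d\times d$ matrices. By Cohn's theory, $\ncr(\mathcal M)=\rho$ means $\mathcal M$ has inner rank $\rho$ over $F\langle t\rangle$. So $\mathcal M=PQ$ with $P$ an $n\times\rho$ matrix and $Q$ a $\rho\times m$ matrix, both with polynomial entries. Substituting $t_i\mapsto T_i$ factors the blown-up matrix through $F[T]^{\rho d}$, which gives rank at most $\rho d$. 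Equivalently, I could use the shrunk-subspace (Hall-type) obstruction: $\ncr=m-\max_U(\dim U-\dim HU)$, and $U\otimes F^d$ is shrunk by $H\otimes\Mat_d$ by the same amount.

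\textbf{Step 3: attainment.} Show that there is some $d$ with $r_d=d\cdot\ncr(\mathcal M)$. This is the Derksen--Makam regularity lemma: from some $d$ (e.g., $d\ge\rho-1$) onward, blow-ups reach $r_d=\rho d$. I will cite this result rather than reprove it; it is the main obstacle, since proving it from scratch requires the Ivanyos--Qiao--Subrahmanyam rank-increasing/regularity argument over large enough fields.

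\textbf{Step 4: conclusion.} Combining Steps 1--3 gives
\[
\sup_d \frac{r_d}{d}=\ncr(\mathcal M),
\]
and the supremum is attained, so it is a maximum. This yields both displayed equalities.
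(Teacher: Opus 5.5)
Your proposal is correct and follows essentially the same route as the paper: the paper reduces to square matrices ``by taking minors'' and then cites the discussion in Ivanyos--Qiao--Subrahmanyam, while you unpack the elementary parts (specialization of minors, Fekete's lemma for the superadditive sequence $r_d$, and the upper bound $r_d \le d\,\ncr(\mathcal{M})$ via an inner-rank factorization or a shrunk subspace) and cite Derksen--Makam for attainment. One point should be made explicit: the cited attainment results are stated for square linear matrices, so you should apply them to a $\rho\times\rho$ submatrix of $\mathcal{M}$ of full noncommutative rank (which exists because $\mathcal{M}$ has rank $\rho$ over the free skew field) and note that its blow-ups are submatrices of the blow-ups of $\mathcal{M}$; this is exactly the paper's reduction to $n = m$.
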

\begin{proof}
    By taking minors it suffices to assume that $n = m$, 
    in this case these characterizations are discussed on \cite[p.3]{IQS}.  
\end{proof}

\section{The asymptotic surjective number}

The goal of this section is to study further an asymptotic invariant introduced by Hashimoto in \cite{HashimotoDual}. 
The results of this section are characteristic-free. 

\subsection{Convergence of the asymptotic surjective number}

It was shown by Hashimoto that the limit in $\asn{M}{N}$ exists by elementary means (\cite[Lemma~3.6]{HashimotoDual}), but the following proof provides an explicit convergence estimate. 

\begin{theorem}\label{thm:linalgsurjections}
Let $k$ be a field and $V, W$ be finite dimensional nonzero $k$-vector spaces. 
Fix a non-empty subspace $H \subseteq \Hom_k (V, W)$ and 
denote by $n(H)$ the maximal integer $n$ for which there is a surjection 
$V \twoheadrightarrow W^{\oplus n}$ with all components in $H$.  
Then 
\[
0 \leq
\min_{Q \neq 0} \left \{ \frac{\dim \left( V/(\cap_{h \in H} \ker (\pi_Q \circ h))\right)}{\dim Q} \right \} - n(H)
< (\dim W)^4
\]
where the minimum ranges through all nonzero quotients $\pi_Q\colon W \twoheadrightarrow Q \neq 0$.
\end{theorem}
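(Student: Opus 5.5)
The inequality $0\le\cdot$ is formal, and I would prove it first. Let $h_1,\dots,h_n\in H$ be the components of a surjection $V\twoheadrightarrow W^{\oplus n}$. For any nonzero quotient $\pi_Q\colon W\twoheadrightarrow Q$, the composite $V\to Q^{\oplus n}$ with components $\pi_Q\circ h_i$ is still surjective. It kills $\bigcap_{h\in H}\ker(\pi_Q\circ h)$, so it factors through $V/\bigcap_{h\in H}\ker(\pi_Q\circ h)$. Hence $n\dim Q\le \dim V/\bigcap_h\ker(\pi_Q\circ h)$. Minimizing over $Q$ gives $n(H)\le c(H)$, where $c(H)$ denotes the minimum in the statement.

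For the upper bound I would dualize and pass to a Hall-type condition. Write $w=\dim W$ and $U=Q^*\subseteq W^*$. Then $\dim V/\bigcap_h\ker(\pi_Q\circ h)=\dim H^*(U)$, where $H^*(U)=\sum_{h\in H}h^*(U)\subseteq V^*$. So $c(H)=\min_{0\ne U\subseteq W^*}\dim H^*(U)/\dim U$. A surjection $V\twoheadrightarrow W^{\oplus n}$ with components in $H$ is the same as an injection $(W^*)^{\oplus n}\to V^*$ with components $h_i^*$. Equivalently, the linear matrix $\mathcal M_n$ with $n$ block rows, each a generic element $\sum_j M_j t^{(i)}_j$ of $H$ with its own variables, has commutative rank $nw$. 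The plan has two steps.

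\textbf{Step 1: full non-commutative rank.} I would show that $\ncr(\mathcal M_n)=nw$ whenever $n\le c(H)$ (or $n\le \lfloor c(H)\rfloor$). By the Fortin--Reutenauer / shrunk-subspace characterization, $\mathcal M_n$ fails to have full nc-rank exactly when some subspace $\mathcal U\subseteq (W^*)^{\oplus n}$ satisfies $\dim \mathcal H_n^*(\mathcal U)<\dim\mathcal U$. Here $\mathcal H_n=H^{\oplus n}$ acts blockwise, so $\mathcal H_n^*(\mathcal U)=H^*(\sum_i p_i(\mathcal U))$, where $p_i$ are the coordinate projections. Setting $U=\sum_i p_i(\mathcal U)$, we have $\dim\mathcal U\le n\dim U$. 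Hence $\dim \mathcal H_n^*(\mathcal U)\ge c(H)\dim U\ge \dim\mathcal U$, which rules out a shrunk subspace.

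\textbf{Step 2: from nc-rank to commutative rank.} I would apply a blow-up regularity result (Derksen--Makam, as in Theorem~\ref{thm: matrix blowups give injections}) to the single-block space $H$, not to $\mathcal M_n$, so that the loss is controlled by $w$ and not by $n$. Concretely, $d\times d$ blow-ups of $H$ with $d\le w-1$ realize injections $(W^*)^{\oplus d}\otimes(\cdot)$ at the nc-rate. The key observation is that a block matrix in $H\otimes\Mat_{d\times d'}$ is again a tuple of elements of $H$. Writing the target multiplicity as $n=qd+r$ with $0\le r<d$ and applying the blow-up statement to a suitable rectangular $d'\times d$ blow-up should produce $h_1,\dots,h_{n'}$ with $n'\ge c(H)-O(w^4)$. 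The crude bound $w^4$ would come from $d\le w$ together with the denominators $\dim U\le w$ appearing in the rational number $c(H)$, with some room to spare.

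\textbf{Main obstacle.} I expect Step 2 to be the main difficulty: a naive use of regularity for $\mathcal M_n$ gives an error growing with $n$. If the grouping trick fails, I would fall back on a greedy argument. Choose $h_1,h_2,\dots$ generically so that $\bigoplus h_i^*$ stays injective. When the process gets stuck with image $I\subseteq V^*$, pass to the induced space $\bar H\subseteq\Hom(W^*,V^*/I)$. Then show via the Hall condition that $c(\bar H)$ drops by at most the number of steps, so the total deficit is bounded by a polynomial in $w$.
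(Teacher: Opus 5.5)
Your lower bound is correct and is the same argument as in the paper. Your Step~1 is also correct: since $\mathcal H_n^*(\mathcal U)=H^*\bigl(\sum_i p_i(\mathcal U)\bigr)$ and $\dim\mathcal U\le n\dim U$, the column blow-up $H\otimes\Mat_{n\times 1}(k)$ has full non-commutative rank $nw$ for every $n\le c(H)$. But this is the easy half, and it does not produce surjections. The $3\times 3$ example from \cite{SmirnovTucker}, recalled in the remark after Theorem~\ref{thm: matrix blowups give injections}, becomes, after transposing, a space with $\dim V=\dim W=3$, $c(H)=1$ and $n(H)=0$. So full nc-rank of $\mathcal M_1$ does not give even one surjection. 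The entire upper bound is the quantitative passage from the Hall-type condition to an honest tuple $(h_1,\dots,h_{n'})$ with $n'\ge c(H)-O(w^4)$, uniformly in $n$ and $\dim V$. Your Step~2 does not supply this.

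\textbf{The grouping trick fails.} A block matrix in $H\otimes\Mat_{d\times d'}(k)$ is a map $V^{\oplus d'}\to W^{\oplus d}$. As soon as $d'>1$ its source is no longer $V$, and a surjection $V^{\oplus d'}\twoheadrightarrow W^{\oplus d}$ gives no surjection out of $V$. Here the source is fixed, so the only admissible blow-ups are the column blow-ups $H\otimes\Mat_{n\times 1}(k)$. Derksen--Makam regularity is about square blow-ups $\mathcal A\otimes\Mat_d(k)$, which enlarge the source. Applied to $\mathcal M_n$ itself, it again enlarges the source, and its size parameter grows with $n$. Blowing up the source is legitimate in Theorem~\ref{thm: asn is achieved}, which is why Theorem~\ref{thm: matrix blowups give injections} is used there and not here.

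\textbf{The greedy fallback is unsupported.} The naive estimate $\dim\bigl((H^*(U)+I)/I\bigr)\ge\dim H^*(U)-\dim I$ with $\dim I=mw$ only gives $c(\bar H)\ge c(H)-mw$. So the claim that $c(\bar H)$ drops by at most the number of steps needs an actual argument. You also do not show that a stuck configuration forces $c(\bar H)$ to be bounded in terms of $w$ alone. That would be a one-surjection version of the theorem, and it is again not implied by nc-rank.

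The missing ingredient is exactly \cite[Corollary~A.12]{SmirnovTucker}: if the minimum is at least $n+\frac{w^4-w^2}{6}$, then there is a surjection $V\twoheadrightarrow W^{\oplus n}$ with all components in $H$. The paper applies it with $n=n(H)+1$ to reach a contradiction. Without that result, or an equivalent proof of it, your proposal does not establish the upper bound.
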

\begin{proof}
First, if there is a surjection $V \twoheadrightarrow W^{\oplus n}$ given by $(h_1, \ldots, h_n) \in H^{\oplus n}$ for some $n \in \mathbb{Z}_{\geq 0}$, then the compositions $(\pi_Q \circ h_1, \ldots, \pi_Q \circ h_n)$
define a surjective homomorphism $V \twoheadrightarrow Q^{\oplus n}$. 
Thus
\[
\dim \left( V/(\cap_{h \in H} \ker (\pi_Q \circ h))\right)
\geq \dim \left( V/(\cap_{i = 1}^n \ker (\pi_Q \circ h_i))\right) = n \dim Q.
\]
This gives the nonnegativity. 

The opposite inequality is a consequence of \cite[Corollary~A.12]{SmirnovTucker}. Set
\[
\mu = \min_{Q \neq 0} \left \{ \frac{\dim \left( V/(\cap_{h \in H} \ker (\pi_Q \circ h))\right)}{\dim Q} \right \}.
\]
Note that the infimum is in fact a minimum, since the set consists of rational numbers with denominators bounded by \(\dim W\).

Suppose for contradiction that
\[
\mu \ge n(H)+1 + \frac{(\dim W)^4 - (\dim W)^2}{6}.
\]
Then by \cite[Corollary~A.12]{SmirnovTucker}, there exists a surjection
$V \twoheadrightarrow W^{\oplus (n(H)+1)}$
with all components in \(H\), contradicting the definition of \(n(H)\). Therefore,
\[
\mu < n(H)+1 + \frac{(\dim W)^4 - (\dim W)^2}{6}.
\]
In particular,
\(\mu < n(H) + (\dim W)^4,\)
which proves the desired inequality.
\end{proof}

This theorem is well-suited for proving the existence of various asymptotic measures of surjections. For convenience, we introduce the following notation.

\begin{definition}\label{def: Z submodule}
    Let $R$ be a ring and $M, N$ be $R$-modules. 
    For a submodule $K \subseteq N$, we define
    \[
    \Z^M (K) \coloneqq \left\{ x \in M \mid h(x) \in K \; \forall h \in \Hom_R (M, N) \right\} = \bigcap_{h \in \Hom_R (M, N)} h^{-1} (K).
    \]
\end{definition}

\begin{corollary}\label{cor: asn convergence}
Let $(R, \m, k)$ be a local ring and $M, N$ be finitely generated $R$-modules with $N$ nonzero. Then
\[
0 \leq \asn{M}{N} - \frac{\max \{n \mid \text{there exists a surjection } M^{\oplus m} \twoheadrightarrow N^{\oplus n}\}}{m}
\leq \frac{(\dim_k (N/\m N))^4}{m}.
\]

Moreover, 
\[
\asn{M}{N} = 
\min \Bigl\{ \frac{\dim_k(M/\Z^M(K))}{\dim_k(N/K)} \,\Bigm|\, \m N \subseteq K \subsetneq N \Bigr\}
\]
and, in particular, it is a rational number. 
\end{corollary}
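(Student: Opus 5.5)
The plan is to reduce to Theorem~\ref{thm:linalgsurjections} over the residue field. Set $V = M^{\oplus m}/\m M^{\oplus m}$ and $W = N/\m N$. Let $H \subseteq \Hom_k(V, W)$ be the image of $\Hom_R(M^{\oplus m}, N)$ under reduction modulo $\m$. Nakayama's lemma shows that a map $M^{\oplus m} \to N^{\oplus n}$ is surjective if and only if its reduction $V \to W^{\oplus n}$ is surjective. Its components lie in $H$. Conversely, every tuple of elements of $H$ lifts to a map over $R$. Hence $n(H) = a_m$, where $a_m := \max\{n \mid M^{\oplus m} \twoheadrightarrow N^{\oplus n}\}$.

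Next we compute the minimum in Theorem~\ref{thm:linalgsurjections}. Nonzero quotients $W \twoheadrightarrow Q$ correspond to submodules $\m N \subseteq K \subsetneq N$ with $Q = N/K$. Every $h \in \Hom_R(M^{\oplus m}, N)$ is a sum of maps $M \to N$ on the coordinates. Therefore
\[
\bigcap_h \ker(\pi_Q \circ h) = \Z^M(K)^{\oplus m}/\m M^{\oplus m},
\]
using $\m M \subseteq \Z^M(K)$, which holds because $K \supseteq \m N$. The quotient then has dimension $m \dim_k(M/\Z^M(K))$. Write
\[
\mu := \min_{\m N \subseteq K \subsetneq N} \frac{\dim_k(M/\Z^M(K))}{\dim_k(N/K)}.
\]
Theorem~\ref{thm:linalgsurjections} then gives
\[
0 \le m\mu - a_m < (\dim_k N/\m N)^4.
\]
Dividing by $m$ yields $\mu - \frac{(\dim W)^4}{m} < \frac{a_m}{m} \le \mu$. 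Letting $m \to \infty$ shows that $\asn{M}{N}$ exists and equals $\mu$, and the displayed estimate follows immediately.

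The set of values in this minimum consists of rationals with denominators at most $\dim_k(N/\m N)$. They are bounded below by $0$, so a minimum is attained even if $k$ is infinite. Thus $\asn{M}{N}$ is rational.

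The only delicate step is the identification of $n(H)$ with $a_m$, and of the intersection of kernels with $\Z^M(K)^{\oplus m}$. This requires checking two things. First, a surjection onto $W^{\oplus n}$ with components in $H$ really lifts to a surjection over $R$; this is Nakayama's lemma applied to finitely generated $N^{\oplus n}$. Second, $\Hom_R(M^{\oplus m}, N) = \Hom_R(M, N)^{\oplus m}$. Everything else is bookkeeping.
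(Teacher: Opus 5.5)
Your proposal is correct and follows essentially the same route as the paper: reduce modulo $\m$ via Nakayama, apply Theorem~\ref{thm:linalgsurjections} to $V=(M/\m M)^{\oplus m}$ and $W=N/\m N$ with component maps coming from $\Hom_R(M,N)^{\oplus m}$, and observe that the intersection of kernels splits as $(\Z^M(K)/\m M)^{\oplus m}$, so the minimum scales exactly by $m$. The paper phrases this last step as the constancy $s_m(Q)=s_1(Q)$ of an auxiliary sequence, which is the same computation you carry out directly.
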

\begin{proof}
Let \(X = M/\mathfrak m M\) and \(Y = N/\mathfrak m N\), and let 
\(H \subseteq \Hom_k(X,Y)\) be the image of the natural map 
\(\Hom_R(M,N) \to \Hom_k(X,Y)\).
If \(f \colon M^{\oplus m} \to N^{\oplus n}\) is an \(R\)-linear map such that 
\(f \otimes_R k\) is surjective, then \(f\) is surjective by Nakayama’s lemma.
Thus,  to compute the asymptotic surjective number, it suffices to consider surjections \(X^{\oplus m} \twoheadrightarrow Y^{\oplus n}\) of \(k\)-vector spaces arising from maps with component maps in \(H\).

Fix a nonzero quotient $\pi_Q\colon Y \twoheadrightarrow Q$ and consider the sequence 
\[
m \mapsto s_m (Q) \coloneqq \frac{1}{m \dim_k Q} \dim_k \left( \frac{X^{\oplus m}}{\bigcap_{h_1, \ldots, h_m \in H} \ker \left( X^{\oplus m} \xrightarrow{(h_1, \ldots, h_m)} Y \xrightarrow{\pi_Q} Q \right) }\right).
\]
By Theorem~\ref{thm:linalgsurjections}, applied with $V = X^{\oplus m}$, $W = Y$, and the maps $X^{\oplus m} \xrightarrow{(h_1, \ldots, h_m)} Y$ with $h_1, \ldots, h_m \in H$, we know that 
\begin{equation}\label{eq: asn and s}
    0 \leq \min_Q s_m(Q) -
\frac{\max \{n \mid \text{there exists a surjection } M^{\oplus m} \twoheadrightarrow N^{\oplus n} \}}{m} 
\leq \frac{(\dim_k Y)^4}{m}.
\end{equation}
 
Now let
\[
(x_1, \ldots, x_m) \in 
\bigcap_{h_1, \ldots, h_m \in H}
\ker\bigl( X^{\oplus m} \xrightarrow{(h_1, \ldots, h_m)} Y \xrightarrow{\pi_Q} Q \bigr).
\]
Fix \(1 \leq i \leq m\). By taking \(h_i = h\) and \(h_j = 0\) for all \(j \neq i\), we see that
\(h(x_i) \in \ker \pi_Q\) for all \(h \in H\).
Therefore,
\[
\bigcap_{h_1, \ldots, h_m \in H} \ker \left( X^{\oplus m} \xrightarrow{(h_1, \ldots, h_m)} Y \xrightarrow{\pi_Q} Q\right) 
= \left(\bigcap_{h \in H} \ker (\pi_Q \circ h)\right)^{\oplus m}
\]
since the previous argument shows the left-hand side is contained in the right-hand side, and the reverse inclusion is clear.
Thus $s_m(Q) = s_1(Q)$ for all $m \ge 1$, so the sequence is constant. Taking the limit as $m \to \infty$ in (\ref{eq: asn and s}) yields that
\(\asn{M}{N}  = \min_{Q \neq 0} s_1(Q).\)
It also provides the required convergence estimate after noting that 
if $Q = N/K$, then 
$s_1(Q) = \frac{\dim_k(M/\Z^M(K))}{\dim_k(N/K)}$.
\end{proof}

\subsection{The asymptotic surjective number is achieved}

We will now show that the limit in Definition~\ref{def: asn}
is achieved.

\begin{theorem}\label{thm: asn is achieved}
Let $(R, \m)$ be a local ring and $M, N$ be finitely generated $R$-modules. If $N$ is nonzero, then there is $m \geq 1$ such that 
\[
\asn{M}{N}  = \frac{\max \{n \mid \text{there exists a surjection }  M^{\oplus m} \twoheadrightarrow N^{\oplus n}\}}{m}.
\]    
\end{theorem}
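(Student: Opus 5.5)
The plan is to reduce, as in the proof of Corollary~\ref{cor: asn convergence}, to a statement about a subspace of matrices over the residue field, and then to use that non-commutative rank is attained by a tensor blow-up of bounded size.

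\textbf{Reduction to linear algebra.} Set $X = M/\m M$ and $Y = N/\m N$, and let $H \subseteq \Hom_k(X,Y)$ be the image of $\Hom_R(M,N)$. By Nakayama's lemma, a surjection $M^{\oplus m} \twoheadrightarrow N^{\oplus n}$ is the same as a surjection $X^{\oplus m} \twoheadrightarrow Y^{\oplus n}$ whose $mn$ components lie in $H$. By Corollary~\ref{cor: asn convergence}, $\mu = \asn{M}{N}$ is a rational number $a/b$ in lowest terms, with $b \le \dim_k Y$; if $\mu = 0$ there is nothing to prove. Such a map is exactly an element of the blown-up space $\mathcal H_{a,b}\otimes \Mat_d(k)$, where $\mathcal H_{a,b}$ is the space of $a\times b$ block matrices with blocks in $H$. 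It is surjective exactly when its rank equals $ad\dim Y$. So it suffices to produce one $d$ with a full-row-rank element in $\mathcal H_{a,b}\otimes\Mat_d(k)$, and then take $m = bd$.

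\textbf{Full non-commutative rank.} By the tensor blow-up Proposition, this is equivalent to $\ncr$ of the linear matrix associated to $\mathcal H_{a,b}$ being $a\dim Y$. The value of $\ncr$ is then automatically attained at a finite blow-up, because it is a maximum over $d \ge 1$ rather than only a limit; this is exactly what gives attainment. To see that $\ncr$ is full, I would use the defining property of $\mu$ in two ways.

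\emph{Asymptotic side.} The sequence of normalized maximal surjection counts is superadditive with limit $\mu$, so $\frac1d\max\rank$ over $\mathcal H_{a,b}\otimes\Mat_d$ tends to $a\dim Y$. Indeed, for each $d$ there are surjections $X^{\oplus bd} \twoheadrightarrow Y^{\oplus n_d}$ with $n_d \ge ad - (\dim Y)^4$ by Corollary~\ref{cor: asn convergence}. Completing such a map by zero rows gives rank at least $(ad-(\dim Y)^4)\dim Y$.

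\emph{Conclusion.} Dividing by $d$ and letting $d\to\infty$, the limit formula in the blow-up Proposition gives $\ncr \ge a\dim Y$. The reverse inequality is trivial, so $\ncr = a\dim Y$. The max formula in the same Proposition then yields a finite $d$ with a full-rank element, which is the surjection we need.

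\textbf{Main obstacle.} The main obstacle is that the blow-up Proposition requires an infinite (or large enough) field, while $k$ may be finite. I would first try to bypass this with a regularity bound of Derksen--Makam type. Such a bound states that full $\ncr$ is already witnessed at blow-up size $d \le a\dim Y - 1$ over the algebraic closure. One would then try to descend to $k$ by enlarging $d$, since a rank condition over $\overline{k}$ involves only finitely many scalars from some finite extension $k'$. Restriction of scalars then turns a $d$-blow-up over $k'$ into a $d[k':k]$-blow-up over $k$ with the same relative rank: view $k'$ as $[k':k]\times[k':k]$ matrices over $k$. This preserves surjectivity and keeps the components in $H$, since $H$ is a $k$-space tensored with matrix blocks. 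I expect making this descent precise to be the most delicate point. The rest follows formally from Corollary~\ref{cor: asn convergence}.
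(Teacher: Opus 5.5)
Your proof is correct, but it establishes full non-commutative rank differently from the paper. Your setup with $\mathcal H_{a,b}$ matches the paper's, which rescales by replacing $M,N$ with $M^{\oplus b},N^{\oplus a}$ so that $\asn{M}{N}=1$.

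\textbf{The paper's route.} It feeds the minimum formula of Corollary~\ref{cor: asn convergence}, dualized from quotients of $Y$ to subspaces of $X$, into the Hall-type condition $\dim\sum_h h(V)\ge \dim V$. It then proves Theorem~\ref{thm: matrix blowups give injections}: Fortin--Reutenauer's zero-block description of $\ncr$ shows that this condition forces full $\ncr$. Hrube\v{s}--Wigderson/IQS then give an invertible blow-up of a square truncation.

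\textbf{Your route.} You bypass the Hall condition and Fortin--Reutenauer. The estimate $n_{bd}\ge ad-(\dim_k Y)^4$ gives elements of $\mathcal H_{a,b}\otimes \Mat_d(k)$ of rank at least $(ad-(\dim_k Y)^4)\dim_k Y$. The limit formula of the blow-up proposition then gives full $\ncr$; you only need its inequality $\rank\le d\cdot \ncr$ for $d$-blow-ups. The deep input you share with the paper is just the attainment (``max'') part, the regularity of non-commutative rank.

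\textbf{What each buys.} Your argument is shorter and shows that attainment is exactly this regularity. The paper's argument yields a standalone Hall-type criterion, which it reuses in Proposition~\ref{prop: asn under flat extension}, although your argument would also work there.

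\textbf{Finite residue fields.} The paper does not spell out this case, and your fix is easier than you fear. Rank lower bounds for elements of $\mathcal H_{a,b}\otimes\Mat_d(k)$ persist in $\mathcal H_{a,b}\otimes\Mat_d(\overline{k})$, since rank is invariant under field extension. So the proposition over $\overline{k}$ gives full $\ncr$, together with a full-row-rank blow-up whose finitely many entries lie in a finite extension $k'\supseteq k$. Choose a $k$-basis of $k'$ and view this $k'$-linear surjection as a $k$-linear map. It becomes an element of $\mathcal H_{a,b}\otimes \Mat_{d[k':k]}(k)$ and is still surjective, because surjectivity does not depend on the scalars. No Derksen--Makam bound is needed.
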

\begin{proof}
It suffices to assume that $\asn{M}{N} > 0$.
Since $\asn{M}{N}$ is a rational number by Corollary~\ref{cor: asn convergence}, we may replace $M$ and $N$ by direct sums and assume that $\asn{M}{N}  = 1$. Now, the alternative characterization of
$\asn{M}{N} $ in Corollary~\ref{cor: asn convergence}, 
reduces the conclusion to a linear algebra problem: 
in the notation of the proof of Corollary~\ref{cor: asn convergence}
it suffices to demonstrate that the condition
\[
\dim_k \left( \frac{X}{\cap_{h \in H} \ker \{X \xrightarrow{h} Y \xrightarrow{\pi_Z} Z\}}\right)
\geq \dim_k Z
\]
for all $\pi_Z$ implies the existence, for some $m$, of a surjection
$X^{\oplus m} \twoheadrightarrow Y^{\oplus m}$
with all components in $H$. 
Theorem~\ref{thm: matrix blowups give injections} provides the dual form of 
this claim.     
\end{proof}

\begin{theorem}\label{thm: matrix blowups give injections}
Suppose that $X, Y$ are finite-dimensional $k$-vector spaces and $H \subseteq \Hom_k (X, Y)$ is a subspace. If
\[
\dim_k \bigg ( \sum_{h \in H} h(V) \bigg ) \geq \dim_k V \text{ for every } V \subseteq X,
\]
then there exists $m \geq 1$ and an injection $\phi_m \colon X^{\oplus m} \to Y^{\oplus m}$ in $H \otimes \Mat_{m \times m} (k)$, \emph{i.e.}, $\phi_m$ is given by a matrix with entries in $H$.
\end{theorem}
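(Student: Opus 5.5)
The plan is to translate the hypothesis into a statement about the non-commutative rank of the linear matrix attached to $H$, and then use the tensor blow-up characterization of $\ncr$ from the preliminaries. Set $n=\dim_k X$ and $n'=\dim_k Y$. Choose a basis $h_1,\dots,h_r$ of $H$ and form the linear matrix $\mathcal{M}=\sum_i h_i t_i$, an $n'\times n$ matrix acting on columns indexed by $X$. If $X=0$ there is nothing to prove, so assume $n>0$. The hypothesis with $V=X$ then forces $H\neq 0$.

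\textbf{Step 1: identify the non-commutative rank.} I will invoke the Cohn / Fortin--Reutenauer formula for the non-commutative rank of a linear matrix:
\[
\ncr(\mathcal{M})=\min_{V\subseteq X}\Big(\dim_k X-\dim_k V+\dim_k \textstyle\sum_{h\in H}h(V)\Big).
\]
In other words, the rank deficit is measured by the worst ``shrunk'' subspace. This formula is exactly the connection to \cite{DerksenMakam} and the related literature (e.g.\ \cite{IQS}). Under our hypothesis, every term in the minimum is at least $\dim_k X$, and the choice $V=0$ attains $\dim_k X$. Hence $\ncr(\mathcal{M})=n$, so $\mathcal{M}$ has full column rank over the free skew field. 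If the formula is only available for square matrices, I will first reduce to that case. When $n'\ge n$, add zero columns or compose with suitable coordinate projections of $Y$, as in the proof of the preliminary proposition, and check that the shrinking condition and the target conclusion both transfer.

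\textbf{Step 2: realize the rank by a blow-up.} Assume first that $k$ is infinite. The preliminary proposition gives
\[
\ncr(\mathcal{M})=\max_{d\ge1}\frac1d\max\{\rank(M)\mid M\in H\otimes\Mat_d(k)\}.
\]
So there are $d$ and $M\in H\otimes\Mat_d(k)$ with $\rank M=dn$. Such an $M$ is a map $X^{\oplus d}\to Y^{\oplus d}$ whose matrix has entries in $H$, and full column rank means it is injective. Take $\phi_m=M$ with $m=d$.

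\textbf{Step 3: finite fields.} If $k$ is finite, I will pass to a finite extension $L/k$ that is large enough for the proposition (the footnote indicates this suffices). The hypothesis is preserved under base change: the subspace $\sum_h h(V)$ for $V\subseteq X_L$ can be compared with the $k$-version, or more simply, the Cohn formula for $\ncr$ is insensitive to extending scalars. This yields an injection $X_L^{\oplus d}\to Y_L^{\oplus d}$ with entries in $H\otimes_k L$. Now embed $L\hookrightarrow\Mat_s(k)$ via the regular representation, where $s=[L:k]$. Then each entry of the form $h\otimes\lambda$ becomes an $s\times s$ block matrix with entries in $H$. An $L$-injective map is in particular $k$-injective, so this gives the required injection $X^{\oplus ds}\to Y^{\oplus ds}$ in $H\otimes\Mat_{ds}(k)$.

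\textbf{Main obstacle.} I expect the main obstacle to be Step 1, namely justifying the min-formula for $\ncr$ in the possibly non-square setting and making sure the shrinking-subspace condition is oriented correctly relative to the column/row convention used in the paper's definition of rank. Once $\ncr(\mathcal{M})=n$ is established, Steps 2--3 are formal.
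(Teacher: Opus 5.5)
Your proposal is correct and follows essentially the same route as the paper. The Fortin--Reutenauer/Cohn description of $\ncr$, which the paper uses in its equivalent block-form contrapositive version, turns the hypothesis into $\ncr(\mathcal{M})=\dim_k X$. The tensor blow-up characterization then produces the injection: the paper truncates to a square matrix and cites Hrube\v{s}--Wigderson, which is what underlies the preliminary proposition you invoke.

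Your Step 3 for finite $k$ is a sound addition that the paper leaves implicit. It passes to a large finite extension $L$ and uses the regular representation $L\hookrightarrow \Mat_s(k)$. You do need to justify the invariance of $\ncr$ under extending scalars, e.g.\ via Cohn's embedding $k\langle\langle t_1,\ldots,t_r\rangle\rangle\hookrightarrow L\langle\langle t_1,\ldots,t_r\rangle\rangle$, as in the paper's flat-extension proposition.
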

\begin{proof}
    It follows from the assumption that $y \coloneqq \dim Y \geq x \coloneqq \dim X$.
    Choose a basis $h_0, \ldots, h_s$ of $H$
    and define a matrix $M = h_0 + h_1t_1 + \cdots + h_st_s$ 
    with entries $k[t_1, \ldots, t_s]$.
    We claim that $\ncr (M) = x$. By contradiction, if $\ncr(M) < x$, then 
    \cite[Theorem~1]{FR} (based on \cite[Corollary~6.3.6]{Cohn}) shows that there are non-negative integers $i, j$ and 
    invertible matrices $U \in \GL_y(k), V \in \GL_x(k)$
    such that $i + j = \ncr(M)$ and 
    \[
    \{UhV \mid h \in H\} \subseteq \left \{\begin{bmatrix}
        A & C\\
        B & 0
    \end{bmatrix} \mid A \in \Mat_{j \times i} (k), B \in \Mat_{(y - j) \times i} (k), C \in \Mat_{j \times (x - i)} (k)
    \right \}.
    \]
    Working with the second block, we see that 
    there is $(x - i)$-dimensional subspace of $X$ mapped into a $j$-dimensional subspace of $Y$ by all $h \in H$. Hence, by the assumption we must have $x \leq j + i = \ncr (M)$, 
    a contradiction. 

    We now truncate (for example, using the Gauss algorithm) $M$ to obtain an $x \times x$ square matrix $N$ such that $\ncr(N) = \ncr(M)$. 
    Write $N = n_0 + n_1t_1 + \cdots + n_st_s$.
    By \cite[Proposition~7.3]{HW} (see also \cite[1.2]{IQS} and the proof of \cite[Proposition~2.1]{KVV}), there exists a positive integer $m$
    and matrices $Q_0, \ldots, Q_s \in \Mat_{m\times m} (k)$
    such that
    \[
    \sum_{i = 0}^s n_i \otimes Q_i \text{ is invertible.}
    \]
    Therefore $\sum_{i = 0}^s h_i \otimes Q_i \in H \otimes \Mat_{m \times m} (k)$ has full rank and the assertion follows. 
\end{proof}

\begin{remark}
An example from \cite{SmirnovTucker} illustrates the difference 
between commutative and non-commutative ranks.
In the example, $M$ and $T$ are three-dimensional vector spaces 
and $H$ is spanned by 
\[
A = \begin{bmatrix}
    1 & 0 & 0 \\
    0 & 1 & 0 \\
    0 & 0 & 0 \\    
\end{bmatrix}, 
B = \begin{bmatrix}
    0 & 0 & 1\\
    0 & 0 & 0\\
    0 & 1 & 0\\    
\end{bmatrix},
\text{ and }
C = \begin{bmatrix}
    0 & 0 & 0 \\
    0 & 0 & -1\\
    1 & 0 & 0 
\end{bmatrix}.
\]
It is easy to see that there is no injection $M \to T$ in $H$, but one can form an injection $M^{\oplus 2} \to T^{\oplus 2}$ by the matrix 
\[
\begin{bmatrix}
    A & B\\
    C & A
\end{bmatrix}.
\]
\end{remark}

\begin{remark}
    If $k$ is infinite and $\dim X > 2$, then \cite[Theorem~1.8]{DerksenMakam} can be used to show that $m < \dim X$ in Theorem~\ref{thm: matrix blowups give injections}.
    It would be interesting to know if there is an effective (not requiring rescaling $N$ and $M$ as in the present proof) bound 
    in Theorem~\ref{thm: asn is achieved} as it would help 
    computing dual F-signature.
\end{remark}

\subsection{Further properties of the asymptotic surjective number}

\begin{proposition}\label{prop: asn under flat extension}
    Let $(R, \m, k) \to (S, \mf n, \ell)$ be a faithfully flat map of local rings and $M, N$ be finitely generated $R$-modules. 
    Then 
    $\asn{M \otimes_R S}{N \otimes_R S} = \asn{M}{N}$.
\end{proposition}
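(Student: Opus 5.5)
The inequality $\asn{M \otimes_R S}{N \otimes_R S} \geq \asn{M}{N}$ is the easy direction. Tensoring with $S$ is right exact, so a surjection $M^{\oplus m} \twoheadrightarrow N^{\oplus n}$ stays surjective after $\otimes_R S$. Also $N \otimes_R S \neq 0$ by faithful flatness. Hence every ratio $n/m$ realized over $R$ is realized over $S$, and we may take the supremum.

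For the reverse inequality I will reduce to linear algebra over the residue fields, as in the proof of Corollary~\ref{cor: asn convergence}.

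\emph{Reduction.} Set $X = M/\m M$, $Y = N/\m N$, and let $H \subseteq \Hom_k(X,Y)$ be the image of $\Hom_R(M,N)$. Since $\m S \subseteq \n$, we have $(M\otimes_R S)\otimes_S \ell = X \otimes_k \ell$, and likewise for $N$. Since $M$ is finitely presented and $S$ is flat, $\Hom_S(M\otimes_R S, N \otimes_R S) = \Hom_R(M,N)\otimes_R S$. Consequently the image of this module in $\Hom_\ell(X_\ell, Y_\ell)$ is the $\ell$-span of $H$, namely $H \otimes_k \ell$. By Nakayama's lemma, as in Corollary~\ref{cor: asn convergence}, it then suffices to prove the following. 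If for some $m, n$ there is an $n \times m$ block matrix with entries in $H\otimes_k \ell$ defining a surjection $X_\ell^{\oplus m} \to Y_\ell^{\oplus n}$, then $\asn{M}{N} \geq n/m$.

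\emph{Reducing to a finite extension.} Such a matrix involves finitely many coefficients $a_1, \ldots, a_r \in \ell$ with respect to a $k$-basis of $H$. Surjectivity amounts to the nonvanishing of some maximal minor, which is a polynomial $f \in k[T_1, \ldots, T_r]$ with $f(a) \neq 0$. Thus $f$ is a nonzero polynomial over $k$, and it has a nonvanishing point $b$ in $\bar k^{\,r}$. All coordinates of $b$ lie in some finite extension $\ell'/k$, say of degree $d$. Substituting $b$ for $a$ yields a surjection $X_{\ell'}^{\oplus m} \twoheadrightarrow Y_{\ell'}^{\oplus n}$ whose block entries lie in $H \otimes_k \ell'$.

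\emph{Restriction of scalars.} Regard $\ell'$ as a $k$-vector space of dimension $d$, so that $X_{\ell'} \cong X^{\oplus d}$ and $Y_{\ell'} \cong Y^{\oplus d}$ as $k$-spaces. Multiplication by an element $c \in \ell'$ becomes a $d \times d$ matrix over $k$, giving an embedding $\ell' \hookrightarrow \Mat_d(k)$. Under this identification an element $h \otimes c \in H\otimes_k \ell'$ becomes $h \otimes [c] \in H \otimes \Mat_d(k)$, that is, a $d\times d$ block matrix with entries in $H$. Therefore the surjection above, viewed over $k$, is a surjection $X^{\oplus md} \twoheadrightarrow Y^{\oplus nd}$ with all components in $H$. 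Lifting these components to $\Hom_R(M,N)$ and applying Nakayama's lemma gives a surjection $M^{\oplus md} \twoheadrightarrow N^{\oplus nd}$. Hence $\asn{M}{N} \geq nd/md = n/m$, and taking the supremum over $m,n$ gives the equality.

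I expect the main obstacle to be the finite-residue-field and transcendental case. Over infinite fields one could simply compare generic ranks, but that comparison is not available in general. The specialization-then-restriction-of-scalars trick above is designed to handle this. Its key point is that a finite field extension of degree $d$ sits inside the tensor blow-up $\Mat_d(k)$, so a surjection over $\ell'$ becomes a surjection of the $d$-fold blow-up over $k$ without loss in the ratio $n/m$.
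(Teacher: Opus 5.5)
Your proof is correct, and it takes a genuinely different, more elementary route than the paper's.

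\textbf{The paper's route.} The paper first uses rationality (Corollary~\ref{cor: asn convergence}) to normalize to $\asn{M\otimes_R S}{N\otimes_R S}=1$. By the proof of Theorem~\ref{thm: matrix blowups give injections}, the generic linear matrix $\overline{h_0}+\overline{h_1}t_1+\cdots+\overline{h_n}t_n$ then has full non-commutative rank over $\ell\langle\langle t_1,\ldots,t_n\rangle\rangle$. Cohn's embedding of free skew fields $k\langle\langle t_1,\ldots,t_n\rangle\rangle\hookrightarrow \ell\langle\langle t_1,\ldots,t_n\rangle\rangle$ transfers this to full non-commutative rank over $k$. Finally, the blow-up results of Hrube\v{s}--Wigderson, via the proof of Theorem~\ref{thm: asn is achieved}, produce a surjection over $R$.

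\textbf{Your route.} You transport each individual surjection directly. A nonvanishing maximal minor is a nonzero polynomial over $k$, so the coefficients can be specialized into a finite extension $\ell'/k$. Restriction of scalars along $\ell'\hookrightarrow\Mat_d(k)$ then turns a surjection with blocks in $H\otimes_k\ell'$ into a $d$-fold tensor blow-up with blocks in $H$, without changing the ratio $n/m$.

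\textbf{What each buys.} Your argument needs only flat base change for $\Hom$, Nakayama's lemma, and the fact that the asymptotic surjective number is a supremum (by superadditivity). It bypasses skew fields, Cohn's theorem, and the Fortin--Reutenauer/Hrube\v{s}--Wigderson input. It also needs neither rationality nor attainment of $\asn{M}{N}$. Finite residue fields cause no ``large enough field'' caveat; indeed, your restriction-of-scalars device is exactly what one would use to remove that caveat from the blow-up theorem. The paper's route, in exchange, shows conceptually that the asymptotic surjective number is governed by a datum that is invariant under extension of the base field: the non-commutative rank of the linear matrix attached to $H$. This fits the non-commutative rank framework the paper uses elsewhere.
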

\begin{proof}
    We may assume that $M, N \neq 0$ since otherwise the assertion is trivial. 
    Since surjections ascend, we immediately obtain an inequality 
    $\asn {M \otimes_R S}{N \otimes_R S} \geq \asn{M}{N}$.
    Thus it suffices to assume that $\asn {M \otimes_R S}{N \otimes_R S} > 0$ and, by replacing $M$ and $N$ with direct sums we may assume that 
    $\asn {M \otimes_R S}{N \otimes_R S} = 1$.
    
    Since $S$ is flat, $\Hom_{S} (M \otimes_R S, N \otimes_R S) = \Hom_R (M, N) \otimes_R S$. Fix minimal generators $h_0, \ldots, h_n$ of $\Hom_R (M, N)$ and denote by $\overline{h_0}, \ldots, \overline{h_n}$
    the induced maps $M \otimes_R k \to N \otimes_R k$. 
    By the proof of Theorem~\ref{thm: matrix blowups give injections} 
    the generic matrix $A = \overline{h_0} + \overline{h_1}t_1 + \cdots + \overline{h_n}t_n$ has the full rank over the skew field $\ell \langle\langle t_1, \ldots, t_n\rangle \rangle$. 
    However, by \cite[Theorem 6.4.6]{Cohn} there is an embedding of skew fields $k \langle\langle t_1, \ldots, t_n\rangle \rangle \to \ell \langle\langle t_1, \ldots, t_n\rangle \rangle$, 
    thus $A$ has full rank over $k \langle\langle t_1, \ldots, t_n\rangle \rangle$ and hence $\asn{M}{N} = 1$ by the proof of Theorem~\ref{thm: asn is achieved}.

\end{proof}

We will now use the convergence estimate to globalize our findings, showing semicontinuity of local asymptotic surjective numbers and that the asymptotic surjective number is rational in non-local rings. 

\begin{definition}
    Let $f \colon \Spec (R) \to \mathbb{R} \cup \{+ \infty\}$ be a function.
    We say that $f$
    is \emph{lower semicontinuous} if for every $a \in \mathbb{R}$
    the set $f^{-1}((a, \infty]) = \{\p \in \Spec(R) \mid f(\p) > a\}$ is open. 
    A function $f$ is \emph{constructible} if $f^{-1} (a)$ 
    is a constructible subset of $\Spec(R)$ for all $a \in \RR \cup \{+\infty\}$ 
    and $f$ takes only finitely many values.
\end{definition}

\begin{remark}
    If $\Spec (R)$ is Noetherian, then a function $f$ is constructible if and only if $f$ is \emph{generically constant}, i.e., for every $\p \in \Spec(R)$ there is an open subset $0 \neq U \subseteq V(\p)$ such that $f(\p) = f(\q)$ for all $\q \in U$ (\cite[053Z]{stacks}).
\end{remark}

\begin{remark}
If $R$ is Noetherian, then $f$ is constructible and lower semicontinuous if and only if $f^{-1}([a, \infty]) = \{\p \in \Spec(R) \mid f(\p) \geq a\}$ is open
for all $a \in \RR \cup \{+\infty\}$. Therefore, by Nagata's criterion (e.g., \cite[Proposition~2.4]{SmirnovSurvey})
this property is equivalent to showing that for $\p, \q \in \Spec(R)$:
\begin{enumerate}
    \item $f(\p) \geq f(\q)$ whenever $\p \subseteq \q$; 
    \item $f$ is generically constant. 
\end{enumerate}
\end{remark}

\begin{proposition}\label{prop: asn semi}
Let $R$ be a commutative Noetherian ring and 
$M, N$ be finitely generated $R$-modules.
Then the function 
$
\p \mapsto \asn{M_\p}{N_\p}
$
is constructible and lower semicontinuous on $\Supp (N)$.

Moreover, if $\dim (R) < \infty$ and $N \neq 0$, then 
$
\asn{M}{N} = \min \{ \asn{M_\p}{N_\p} \mid \p \in \Supp (N)\}.
$
In particular, $\asn{M}{N}$ is a rational number. 
\end{proposition}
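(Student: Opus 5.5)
We verify the two conditions of Nagata's criterion from the preceding remark for $f(\p)=\asn{M_\p}{N_\p}$ on the closed set $\Supp(N)$. We then globalize using the convergence estimate of Corollary~\ref{cor: asn convergence}.

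\emph{Monotonicity.} Let $\p\subseteq\q$ in $\Supp(N)$. A surjection $M_\q^{\oplus m}\twoheadrightarrow N_\q^{\oplus n}$ localizes to a surjection $M_\p^{\oplus m}\twoheadrightarrow N_\p^{\oplus n}$, so $f(\p)\ge f(\q)$.

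\emph{Generic constancy.} Fix $\p\in\Supp(N)$. By Theorem~\ref{thm: asn is achieved} there are $m,n$ with $f(\p)=n/m$, realized by a surjection $M_\p^{\oplus m}\twoheadrightarrow N_\p^{\oplus n}$. This surjection comes from a map $M^{\oplus m}\to N^{\oplus n}$ whose cokernel vanishes at $\p$, hence on an open neighborhood. So $f\ge f(\p)$ on a neighborhood of $\p$; together with monotonicity this already gives lower semicontinuity in the sense that $\{f\ge a\}$ is stable under generization near points where it holds. For the reverse inequality on a dense open subset of $V(\p)$, we use the min formula of Corollary~\ref{cor: asn convergence}. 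After shrinking to $D(s)\cap V(\p)$, we may assume the following:
\begin{itemize}
\item $R/\p$ is a domain;
\item $\Hom_R(M,N)\otimes\kappa(\q)\to\Hom_{\kappa(\q)}(M\otimes\kappa(\q),N\otimes\kappa(\q))$ has image whose behaviour is "generic": by generic freeness, $M/\p M$, $N/\p N$ and the relevant Hom images are free over $(R/\p)_s$, with ranks constant in $\q$.
\end{itemize}
Then $f(\q)$ is the non-commutative-rank-type quantity attached to the space $H_\q\subseteq\Hom(X_\q,Y_\q)$. By the proofs of Theorem~\ref{thm: asn is achieved} and Theorem~\ref{thm: matrix blowups give injections}, $f(\q)\ge n/m$ exactly when a certain blown-up generic matrix has full non-commutative rank over $\kappa(\q)$. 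The upper bound $f(\q)\le f(\p)$ then amounts to showing that failure of full nc-rank at the generic point persists on an open set. The Fortin--Reutenauer shrunk-subspace witness $(U,V)$ over $\kappa(\p)$ is defined over $(R/\p)_s$ after inverting one more element, so it specializes. Equivalently, the quotient $K$ achieving the minimum in Corollary~\ref{cor: asn convergence} at $\p$ spreads out to a submodule with $\dim(M/\Z^M(K))/\dim(N/K)$ constant on an open set. This gives $f(\q)\le f(\p)$ there, hence generic constancy and constructibility. Finitely many values then follow by Noetherian induction.

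\emph{Global statement.} Since $\asn{M}{N}\le f(\p)$ for all $\p$, it suffices to show that the minimum $a=\min_{\p\in\Supp N} f(\p)$ (attained, since $f$ is constructible) satisfies $\asn{M}{N}\ge a$. For each $\p$, choose $m$ large with the local maximum ratio at least $f(\p)-C/m$, where $C$ bounds $(\mu(N_\p))^4$ uniformly. The obstacle is producing a \emph{global} surjection $M^{\oplus m}\twoheadrightarrow N^{\oplus n}$ with $n/m\approx a$. I would do this by a general-position / prime avoidance argument in the style of Serre's splitting or Forster--Swan: pass to a sufficiently large multiple, and build the map inductively over strata of $\Supp(N)$ by dimension. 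At each step, combine local surjections with generic linear combinations, losing at most a bounded number (of order $\dim R+\mu(N)^4$) of copies of $N$. This is where $\dim R<\infty$ enters. Dividing by $m$ and letting $m\to\infty$ gives $\asn{M}{N}\ge a$, so equality holds, and rationality follows from the local case. I expect this Forster--Swan type gluing to be the main difficulty.
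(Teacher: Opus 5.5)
Your proof of constructibility and lower semicontinuity is correct and is the paper's argument. Monotonicity comes from localizing surjections, and $f\ge f(\p)$ holds on some $D(t)$ by spreading out an optimal surjection, which exists by Theorem~\ref{thm: asn is achieved}. The long detour for the ``reverse inequality'' is unnecessary. For $\q\in V(\p)$ you have $\p\subseteq\q$, so the monotonicity you already proved gives $f(\q)\le f(\p)$. Hence $f\equiv f(\p)$ on $V(\p)\cap D(t)$, which is generic constancy. You can drop the generic-freeness and Fortin--Reutenauer specialization sketch entirely; as written it asserts base change of $\Hom$ and of the witness without showing it, so it should not carry weight anyway.

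The genuine gap is in the global statement. Your reduction via the uniform $C/m$ estimate of Corollary~\ref{cor: asn convergence} is fine, and it is what lets you exchange the limit and the infimum. After that, everything rests on a global inequality
\[
\max\{n \mid \text{there exists a surjection } M^{\oplus m}\twoheadrightarrow N^{\oplus n}\}\ \ge\ \inf_{\p\in\Spec(R)}\gamma_m(\p)-c ,
\]
where $\gamma_m(\p)$ is the local maximum for surjections $M_\p^{\oplus m}\twoheadrightarrow N_\p^{\oplus n}$ and $c$ is independent of $m$. You only describe how you \emph{would} prove this, and ``combine local surjections with generic linear combinations over strata'' is not an argument. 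It is also not a routine prime-avoidance step. You must choose an $n$-tuple in $\Hom_R(M^{\oplus m},N)$ whose reductions are jointly surjective onto $(N\otimes\kappa(\p))^{\oplus n}$ at every prime simultaneously, possibly over finite residue fields. That is a Serre-splitting/Eisenbud--Evans type theorem in its own right.

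The paper fills exactly this step by citing \cite[Theorem~3.9]{Baidya}, which gives the displayed inequality with $c=\dim R$; this is where $\dim R<\infty$ enters. Once that result is invoked, the rest of your outline agrees with the paper's proof: the minimum is attained, you pass to the limit after dividing by $m$, and rationality follows from the local min formula.
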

\begin{proof}
For a prime $\p \in \Supp (N)$ let us denote by $\gamma_m (\p)$
the local \emph{surjective capacity}
\[
\gamma_m (\p) = \max \{n \mid \text{there exists a surjection }  M_\p^{\oplus m} \twoheadrightarrow N_\p^{\oplus n}\}.
\] 
It is easy to see that the function $\p \mapsto \gamma_m(\p)$ is 
constructible and lower semicontinuous on $\Supp (N)$, for example by adapting the argument of \cite[Lemma~2.2]{PolstraTucker} to verify the Nagata criterion. 
By passing to the suprema, we obtain that $\asn{M_\p}{N_\p} \geq \asn{M_\q}{N_\q}$ whenever $\p \subseteq \q$.
Furthermore, since $\asn{M_\p}{N_\p}$ is achieved, 
by spreading the optimal surjection 
we may find $s \notin \p$
such that $\asn{M_\q}{N_\q}\geq \asn{M_\p}{N_\p}$
for all $s \notin \q$. 
The first conclusion follows from Nagata's criterion.

Second, since $\Supp(N)$ is quasi-compact, a lower semicontinuous function attains its minimum (e.g., \cite[Theorem~2.3]{SmirnovTucker}).
Thus, it suffices to show that 
\[
\asn{M}{N}= \inf_{\p \in \Spec(R)} \asn{M_\p}{N_\p}.
\]
Since $N$ is finitely generated, $\sup \{ \dim (N_\p/\p N_\p) \mid \p \in \Spec (R)\} < \infty$, so Corollary~\ref{cor: asn convergence}
gives uniform convergence of $\gamma_m (\p)/m$ 
that in turn allows to exchange the limit and the infimum
\[
\inf_{\p \in \Spec(R)} \asn{M_\p}{N_\p}
= \lim_{m \to \infty} \frac{1}{m}
\inf_{\p \in \Spec (R)} \gamma_m (\p).
\]
Next, \cite[Theorem~3.9]{Baidya} implies that
\[
\inf_{\p \in \Spec(R)} \gamma_m(\p) - \dim R
\le
\max \{n \mid \text{there exists a surjection } M^{\oplus m} \twoheadrightarrow N^{\oplus n}\}
\le
\inf_{\p \in \Spec(R)} \gamma_m(\p).
\]
The conclusion follows after dividing both sides by $m$ and taking the limits.    
\end{proof}

For the next result we want to introduce some notation. 
Suppose that $B$ is a ring, $R$ be a finite type $B$-algebra, and $I$ be an ideal of $R$ such that $R/I \cong B$ as $B$-algebras. 
Note that $I$ is a maximal ideal of $R_\p/\p R_\p$ for any $\p \in \Spec (B)$.
Thus $\Spec (B)$ parametrizes a family of local rings 
$R(\p)$, defined as the localization of $R_\p/\p R_\p$ at the image of $I$.
For any $R$-module $M$ we also set $M(\p) = M \otimes_R R(\p)$. 

\begin{corollary}\label{cor: semicontinuity in families}
    Suppose that $B$ is a Noetherian ring, $R$ is a finite type $B$-algebra, and $I$ is an ideal of $R$ such that $R/I \cong B$ as $B$-algebras. 
    If $M, N$ are finitely generated $R$-modules, then 
    \[
    \Spec (B) \ni \p \mapsto \asn{M(\p)}{N(\p)}
    \]
    is a constructible function.    
\end{corollary}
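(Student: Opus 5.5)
Our plan is to use Noetherian induction on $\Spec(B)$, in the form of the generic constancy criterion: it suffices to show that for every $\p \in \Spec(B)$ there is a nonempty open $U \subseteq V(\p)$ on which $\q \mapsto \asn{M(\q)}{N(\q)}$ is constant. Replacing $B$ by $B/\p$ and $R$, $M$, $N$, $I$ by their reductions mod $\p$ (which does not change the local rings $R(\q)$ for $\q \supseteq \p$, nor $M(\q)$, $N(\q)$), we may assume $B$ is a domain and $\p = 0$. We must then find a nonzero $s \in B$ such that the value is constant on $D(s)$. If $N(0) = 0$, then by generic freeness and Nakayama we can shrink so that $N(\q) = 0$ for all $\q$ in an open set, and the value is constantly $\infty$. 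So assume $N(0) \neq 0$.

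The key step is to encode $\asn{M(\q)}{N(\q)}$ through the formula of Corollary~\ref{cor: asn convergence} (or equivalently Theorem~\ref{thm: asn is achieved}) in terms of linear algebra over the residue field $\kappa(\q) = R(\q)/IR(\q)$. Set $X = M/IM$, $Y = N/IN$. These are finitely generated $B$-modules, and after shrinking $\Spec B$ they are free and compatible with base change, so $M(\q)\otimes \kappa(\q) = X\otimes_B \kappa(\q)$, and similarly for $Y$. The subtle part is the space $H_\q$, the image of $\Hom_{R(\q)}(M(\q),N(\q))$ in $\Hom_{\kappa(\q)}(X_\q, Y_\q)$. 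Here we use generic flatness: shrinking further, $R$, $M$, $N$ and the modules in a finite presentation of $M$ become flat over $B$, and $\Hom_R(M,N)$ together with the relevant cokernels can be made to commute with base change to $\kappa(\q)$ (using a presentation $R^a \to R^b \to M \to 0$, so that $\Hom(M,N)$ is the kernel of $N^b \to N^a$, and applying generic flatness to the cokernel). Localization at $I$ is flat, so $\Hom_{R(\q)}(M(\q),N(\q)) = \Hom_R(M,N)\otimes_R R(\q)$. Consequently $H_\q$ is the image of the single finitely generated $B$-module map $\Hom_R(M,N)\otimes_R R/I \to \Hom_B(X,Y)$ after tensoring with $\kappa(\q)$. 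Shrinking again so that the image and cokernel of this map are free, $H_\q = H \otimes_B \kappa(\q)$ for a fixed free $B$-submodule $H \subseteq \Hom_B(X,Y)$ with free cokernel.

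Now the value at $\q$ depends only on the triple $(X,Y,H)\otimes_B \kappa(\q)$. By Theorem~\ref{thm: asn is achieved} and its proof, $\asn{}{}$ equals $a/b$ exactly when, for some $m$, a matrix in $H^{\oplus}\otimes \Mat$ of the appropriate sizes (corresponding to surjections $X^{\oplus bm}\to Y^{\oplus am}$) has full rank, but no such matrix exists for larger ratios. The cleanest route is the non-commutative rank description: choosing a $B$-basis $h_0,\dots,h_s$ of $H$, the value is determined by the noncommutative ranks of the linear matrices $\sum \overline{h_i}\, t_i$ and of their blow-ups $X^{\oplus c}\to Y^{\oplus d}$, for finitely many $(c,d)$ once we note that $\asn{}{}$ takes values with denominator bounded by $\dim Y$ via Corollary~\ref{cor: asn convergence}. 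Over the generic point, each of these finitely many noncommutative ranks $r$ is witnessed by an explicit finite blow-up: a matrix $\sum h_i \otimes Q_i$ with $Q_i \in \Mat_m(K)$, $K = \operatorname{Frac}B$, having a nonzero $rm \times rm$ minor. After clearing denominators and inverting that minor, the same witness works on a nonempty open set, giving lower bounds. For upper bounds, the Fortin--Reutenauer shrunk-subspace certificate used in the proof of Theorem~\ref{thm: matrix blowups give injections} (invertible $U$, $V$ making a zero block) has entries in $K$, and it spreads out over a nonempty open set. Alternatively, the minimum formula of Corollary~\ref{cor: asn convergence} supplies a subspace $Q$ with $\dim(X/\Z(K))$ computed by a rank condition that also spreads out. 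Combining these, the value is constant on a nonempty open set.

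The main obstacle is the base change step for $\Hom$: we must ensure that homomorphisms over the fibre local ring $R(\q)$, modulo the maximal ideal, are exactly the specialization of global homomorphisms. We will handle it via generic flatness of the cokernel of $N^b\to N^a$. The residue fields $\kappa(\q)$ vary, and in particular may be finite, but noncommutative rank is insensitive to field extension (as in Proposition~\ref{prop: asn under flat extension}), so the witnesses transfer.
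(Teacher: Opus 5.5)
Your proof is correct. It agrees with the paper on the reduction to $B$ a domain with $\p$ its generic point, on the case $N(\p)=0$, and on the lower bound: your nonzero-minor witness for an optimal generic surjection is the paper's ``collecting denominators'' step.

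The difference is in how you prove $\asn{M(\q)}{N(\q)}\le\asn{M(\p)}{N(\p)}$. The paper never passes to linear algebra over $B$. It takes an open set on which $\Hom_R(M,N)\otimes_B\kappa(\q)\cong\Hom(M\otimes_B\kappa(\q),N\otimes_B\kappa(\q))$, which it cites from EGA rather than deriving from a presentation as you do. It then lifts an optimal surjection $M(\q)^{\oplus m}\twoheadrightarrow N(\q)^{\oplus n}$ at the special point to $R_{(\q,I)}$, makes the lift surjective by Nakayama, and localizes to the generic point.

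You instead package all fibres as one free triple $(X,Y,H)$ over $B$, with $H$ a direct summand of $\Hom_B(X,Y)$, and specialize a certificate from the generic fibre. Of your two certificates, the min-formula one of Corollary~\ref{cor: asn convergence} is the cleaner. Take an optimal quotient $P\colon Y_K\to Q$ over $K=\operatorname{Frac}(B)$, clear denominators, and invert a maximal minor of $P$. Then $\asn{M(\q)}{N(\q)}$ is at most $\rank$ of the specialized stacked matrix with blocks $\overline{Ph_i}$, divided by $\dim Q$. This is at most the generic value, simply because ranks can only drop under specialization. The Fortin--Reutenauer route also works, but it needs your finite list of candidate values with denominators bounded by $\dim Y$.

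What each approach buys:
\begin{itemize}
\item Yours costs extra bookkeeping: the image and cokernel of $\Hom_R(M,N)\otimes_R R/I\to\Hom_B(X,Y)$ must be free so that $H$ commutes with base change.
\item In return, yours needs no lifting of maps from fibres, and it uses Theorem~\ref{thm: asn is achieved} only at the generic point.
\item The paper's argument is shorter and stays with modules throughout.
\end{itemize}

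Your closing remark on the insensitivity of non-commutative rank to field extension is beside the point, since $\kappa(\q)$ is not an extension of $\operatorname{Frac}(B)$. Your argument does not actually use it, because every witness is transported by direct specialization.
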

\begin{proof}
    We will prove that the function is generically constant.
	By passing to $B/\p B$, we may assume that $B$ is a domain and $\p$ is its generic point. Since $B$ is a domain, there is an open set $\emptyset \neq U \subset \Spec(B)$ such that for any $\q \in U$
    \[
    \Hom_R (M, N) \otimes_B \kappa(\q) \cong 
    \Hom_{R \otimes_B \kappa(\q)} (M \otimes_B \kappa(\q), N \otimes_B \kappa(\q)),
    \]
    see, for example, \cite[(9.4.3)]{EGA43}.
    If $N(\p) = 0$, then after shrinking $U$ the modules $N(\q)$ vanish for all $\q \in U$, so the function is constantly $\infty$ on $U$. We may therefore assume $N(\p) \neq 0$, whence $N(\q) \neq 0$ for every $\q \in U$.
    Any map in $\Hom_R (M(\q), N(\q))$
    lifts to $\Hom_R (M_{(\q, I)}, N_{(\q, I)})$. 
    In particular, if $\phi\colon M(\q)^{\oplus m} \twoheadrightarrow N(\q)^{\oplus n}$ is a surjection such that $\asn{M(\q)}{N(\q)} = n/m$, then there is $\Phi \in \Hom_R (M_{(\q, I)}^{\oplus m}, N_{(\q, I)}^{\oplus n})$ such that $\phi$ is a base change of $\Phi$. 
	Applying Nakayama's lemma in $R_{(\q, I)}$, we obtain that 
	$\Phi \otimes_R R_{(\q, I)}$ is surjective. Localizing further, it follows that $\asn{M(\p)}{N(\p)} \geq n/m = \asn{M(\q)}{N(\q)}$.
	
    Conversely, take a surjection $M(\p)^{\oplus m} \twoheadrightarrow N(\p)^{\oplus n}$ such that $\asn{M(\p)}{N(\p)} = n / m$. Collecting denominators, we find $r\in R\setminus I$ such that there is a surjection 
    $(M/\p M)_r^{\oplus m} \twoheadrightarrow (N/\p N)_r^{\oplus n}$.
    After shrinking $U$ by inverting the image of $r$
    in $R/I\cong B$, base change gives
    $\asn{M(\q)}{N(\q)}\ge n/m=\asn{M(\p)}{N(\p)}$
    for every $\q\in U$.
\end{proof}

\begin{remark}\label{asn not semicontinuous}
    Semicontinuity does not hold in general because the asymptotic surjective number need not always increase or decrease under specialization. 
    Namely, take $B = k[t]$, $R = B[x]$, $I = (x)$,
    $M = R \oplus R/(x-t)$, and $N = R \oplus R/(x)$.
    Let $\p = (0)$ and $\m = (t)$. 
    Then an easy computation shows that
    \[
    \asn{M(\p)}{N(\p)} = 1/2 < 1 = \asn{M(\m)}{N(\m)}, \text{while} 
    \]
    \[
    \asn{R(\p)}{M(\p)} = 1 > 1/2 = \asn{R(\m)}{M(\m)}.
    \]

\end{remark}

\section{F-surjective signatures}
We now turn to positive characteristic and establish existence of $F$-surjective signatures. We first set a definition that presents 
an analogue of Frobenius splitting ideals \cite{Yao2, AberbachEnescu0}
and generalizes \cite[Definition~4.3]{SmirnovTucker}.

\begin{definition}\label{def Z}
Let $(R, \mf m, k)$ be a local F-finite ring of characteristic $p > 0$, and $M, N$ finitely generated $R$-modules. 
For a submodule $V \subseteq N$,  
we denote $\Z_e^M (V) \coloneqq \Z^{F_*^e M} (V)$, i.e.,  
the submodule of $F_*^e M$ given by 
\[
\Z_e^M (V) \coloneqq 
\{
x \in F_*^e M \mid \phi (x) \in V \quad \forall \phi \in \Hom_R (F_*^e M, N)
\}.
\]
\end{definition}

We now state the basic properties of these submodules.

\begin{proposition}\label{prop: basic property Z}
Consider a local $F$-finite ring $(R, \mf m, k)$ of characteristic $p > 0$ 
and finitely generated $R$-modules $M, M', N$.
Let $V \subseteq N$ be a submodule and denote by $\pi$ the quotient map $N \to N/V$.
Then 
\begin{enumerate}
\item if $\m N \subseteq V$, then $\mf m F_*^e M \subseteq \Z_e^M (V)$,
\item $\Z_e^M (V) = \cap_{\phi \in \Hom_R (F_*^e M, N)} \ker (\pi \circ \phi)$,
\item $\Z_e^M (V)$ is an $F_*^e R$-submodule of $F_*^e M$, and
\item if $\psi \colon M' \to M$ is a homomorphism of $R$-modules, 
then $F_*^e \psi (\Z_e^{M'}(V)) \subseteq \Z_e^M (V)$,  \label{part: Z is functorial}
\item $\Z_e^{M \oplus M'} (V) = \Z_e^{M} (V) \oplus \Z_e^{M'} (V)$.
\label{part: Z and direct sum}
\end{enumerate}
\end{proposition}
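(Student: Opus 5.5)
Each of the five items is a direct verification from Definition~\ref{def Z}, so I will check them one at a time in the stated order, reusing earlier items where convenient. The guiding fact is that every $\phi \in \Hom_R(F_*^e M, N)$ is $R$-linear. Here $R$ acts on $F_*^e M$ through $r \cdot x = r^{p^e} x$.

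For (1), take $x \in \m F_*^e M$, written as $x = \sum r_i x_i$ with $r_i \in \m$ acting via the $R$-module structure. Then $\phi(x) = \sum r_i \phi(x_i) \in \m N \subseteq V$. For (2), note that $\phi(x) \in V$ holds if and only if $\pi(\phi(x)) = 0$, so the two descriptions coincide. Equivalently, $\Z_e^M(V)$ is the intersection over $\phi$ of the submodules $\phi^{-1}(V)$.

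Item (3) is the only point needing some care, because $\phi$ is merely $R$-linear and not $F_*^e R$-linear. Let $a \in F_*^e R$ and $x \in \Z_e^M(V)$. Multiplication by $a$ is an $R$-linear endomorphism $\mu_a$ of $F_*^e M$, since $R$ is commutative. Hence for any $\phi$ the composite $\phi \circ \mu_a$ again lies in $\Hom_R(F_*^e M, N)$. Applying the defining condition to $\phi\circ\mu_a$ gives $\phi(ax) = (\phi \circ \mu_a)(x) \in V$, so $ax \in \Z_e^M(V)$. Closure under addition is immediate, so $\Z_e^M(V)$ is an $F_*^e R$-submodule.

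Item (4) uses the same precomposition trick. Since $F_*^e\psi$ is $R$-linear, $\phi \circ F_*^e\psi \in \Hom_R(F_*^e M', N)$ for every $\phi \in \Hom_R(F_*^e M, N)$. Therefore, for $x \in \Z_e^{M'}(V)$ we get $\phi(F_*^e\psi(x)) \in V$ for all $\phi$.

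For (5), I will use $F_*^e(M\oplus M') = F_*^e M \oplus F_*^e M'$ and the identification
\[
\Hom_R(F_*^e M \oplus F_*^e M', N) = \Hom_R(F_*^e M, N) \oplus \Hom_R(F_*^e M', N).
\]
The inclusion $\supseteq$ follows from (4), applied to the two inclusions $M \to M\oplus M'$ and $M' \to M\oplus M'$, together with closure under sums. For $\subseteq$, take $(x,x') \in \Z_e^{M\oplus M'}(V)$ and test it against the maps $(\phi,0)$; this gives $\phi(x)\in V$ for all $\phi$, so $x \in \Z_e^M(V)$. By symmetry $x' \in \Z_e^{M'}(V)$.

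I expect no genuine obstacle. The one point to watch is (3), and that is handled by the observation that multiplication by elements of $F_*^e R$ is $R$-linear.
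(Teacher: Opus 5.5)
Your proposal is correct and follows essentially the same route as the paper: (1)--(2) come straight from the definition, (3) and (4) come from precomposing $\phi$ with the $R$-linear maps $\mu_a$ and $F_*^e\psi$, and (5) uses the decomposition of $\Hom_R(F_*^e(M\oplus M'),N)$ into maps vanishing on one summand. The only variation is that you derive the inclusion $\supseteq$ in (5) from (4), whereas the paper gets both inclusions directly from that decomposition; this is immaterial.
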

\begin{proof}
The first two properties are immediate from the definition. Third, 
if $x \in \Z_e^M (V)$ and $r \in R$, then for any $\phi \in \Hom_R (F_*^e M, N)$ we may view $\phi (F_*^e r x)$ as the evaluation of the composition map at $x$, so $\phi (F_*^e r x) \in \ker \pi$ and the property follows.
Property (\ref{part: Z is functorial}) follows from the definition upon noting that the composition with $F^e_*\psi$ gives a map $\Hom_R (F_*^e M, N) \to \Hom_R (F_*^e M', N)$.
Last, (\ref{part: Z and direct sum}) is immediate from the definition since $\Hom_R (F_*^e (M\oplus M'), N)$ is generated by maps vanishing on one of the summands.
\end{proof}

We use submodules $\Z_e$ to define a generalization of F-signature. 
We will later show that this generalization recovers the $F$-surjective signature. 

\begin{definition}\label{def: Z signature}
Let $(R, \mf m, k)$ be a local F-finite ring of characteristic $p > 0$ and 
$M, N$ be finitely generated $R$-modules. 
For a submodule $\m N \subseteq V \subsetneq N$ 
the corresponding generalized F-signature is defined as
\[
\Zsig{M} (V) = 
\lim_{e \to \infty} \delta(R)^{-e}\dim_k ((F_*^e M)/\Z_e^M(V)) = 
\lim_{e \to \infty} \frac{\dim_k ((F_*^e M)/\Z_e^M(V))}{[k : k^{p^e}]p^{e\dim R}}. 
\]
\end{definition}

We will now prove that this limit exists and that convergence is uniform in $V$.

\subsection{Existence and uniform convergence}

Recall that $\Minh (R) = \{\mf p \in \Spec R \mid \dim R/\mf p = \dim R\}$. 

\begin{lemma}\label{lem: comparison generically equal}
Let $(R, \mf m, k)$ be a local F-finite ring of characteristic $p > 0$ and 
suppose that $L$ and $M$ are two finitely generated $R$-modules
and $L_\mf p \cong M_\mf p$ for 
every prime $\mf p \in \Minh (R)$. 
Then there exists a constant $C$ such that for any
finitely generated $R$-module $N$,
any submodule $\m N \subseteq V \subsetneq N$, and any $e \geq 1$ one has an estimate
\[
|\dim_k ((F_*^e M)/\Z_e^M(V)) - \dim_k ((F_*^e L)/\Z_e^L(V)) | < Cp^{e (\dim R - 1)}[k^{1/p^e}: k].
\]
\end{lemma}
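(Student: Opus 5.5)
Since $L_\p \cong M_\p$ for all $\p \in \Minh(R)$, the standard first move is to connect $L$ and $M$ by maps that are generic isomorphisms. Pick an isomorphism $L_\p \cong M_\p$ for each $\p\in\Minh(R)$; clearing denominators gives homomorphisms $\alpha\colon L \to M$ and $\beta\colon M\to L$ (after multiplying by a suitable element $c$ avoiding all primes of $\Minh(R)$, via prime avoidance) such that $\alpha_\p$ and $\beta_\p$ are isomorphisms for every $\p \in \Minh(R)$. Hence $\ker\alpha$, $\operatorname{coker}\alpha$, $\ker\beta$ and $\operatorname{coker}\beta$ all have dimension $<\dim R$. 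By symmetry it suffices to bound $\dim_k (F_*^e M/\Z_e^M(V)) - \dim_k (F_*^e L/\Z_e^L(V))$ from above, using $\alpha\colon L\to M$; the reverse inequality comes from $\beta$.

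Apply Proposition~\ref{prop: basic property Z}(\ref{part: Z is functorial}): $F_*^e\alpha$ maps $\Z_e^L(V)$ into $\Z_e^M(V)$, so we get an induced map $F_*^e L/\Z_e^L(V) \to F_*^e M/\Z_e^M(V)$ of $k$-vector spaces (killed by $\m$, by part (1)). Its cokernel is a quotient of $F_*^e(\operatorname{coker}\alpha)$ tensored down, so
\[
\dim_k (F_*^e M/\Z_e^M(V)) \le \dim_k (F_*^e L/\Z_e^L(V)) + \mu_R(F_*^e \operatorname{coker}\alpha).
\]
Here I use that $F_*^e M/\Z_e^M(V)$ is a quotient of $F_*^eM/\m F_*^eM$, and the image of $F_*^eL$ together with the image of $F_*^e M \to F_*^e\operatorname{coker}\alpha$ generates. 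The bound is independent of $N$ and $V$, which is what uniformity requires.

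The remaining step is the standard estimate $\mu_R(F_*^e T) \le C p^{e\dim T}[k^{1/p^e}:k]$ for a finitely generated module $T$. To prove it, take a prime filtration of $T$ with factors $R/\q$, $\dim R/\q \le \dim T$. Then $\mu(F_*^e R/\q)\le \length(F_*^e(R/(\q+ J))) = [k^{1/p^e}:k]\,\length(R/(\q+J^{[p^e]}))$ for a system of parameters $J$ of $R/\q$, and the latter is $O(p^{e\dim R/\q})$. With $\dim\operatorname{coker}\alpha\le \dim R-1$ this gives the claimed bound; the same applies to $\beta$. The case $\dim R=0$ is trivial because then $\alpha$ is an isomorphism. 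I expect the main subtlety to be the first step: arranging that a single map $\alpha$ is a generic isomorphism simultaneously at all primes of $\Minh(R)$, handled by prime avoidance on the denominators. The kernels play no role, since only cokernels enter the estimate.
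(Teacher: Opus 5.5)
Your proposal is correct and follows essentially the same route as the paper: a generic isomorphism $L\to M$ whose cokernel $T$ has $\dim T<\dim R$, functoriality and $\m F_*^eM\subseteq\Z_e^M(V)$ to bound the difference by $\dim_k F_*^eT/\m F_*^eT$, the $O(p^{e\dim T})$ growth of the Hilbert--Kunz function, and symmetry in $L$ and $M$. For the step you flag as the main subtlety, the paper gets the single map exactly as you suggest: since $W^{-1}L\cong W^{-1}M$ for $W=R\setminus\bigcup_{\p\in\Minh(R)}\p$, one clears denominators.
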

\begin{proof}
Let $W = R \setminus \bigcup_{\mf p \in \Minh (R)} \mf p$.  By the assumption, 
$W^{-1} M \cong W^{-1} L$, so there is an exact sequence 
$L \xrightarrow{\alpha} M \xrightarrow{\beta} T \to 0$
such that $\dim T < \dim R$. 
By Proposition~\ref{prop: basic property Z} 
we therefore have the exact sequence
\[
F_*^e L/\Z_e^L (V) \xrightarrow{\overline{F_*^e \alpha}} F_*^eM/\Z_e^M(V) \xrightarrow{\overline{F_*^e \beta}} F_*^eT/F_*^e \beta(\Z_e^M(V)) \to 0.
\]
Taking lengths, we immediately obtain that 
\[
\dim_k \frac {F_*^e M}{\Z_e^M(V)} - \dim_k \frac{ F_*^e L}{\Z_e^L(V)} 
\leq \dim_k \frac{F_*^e T}{F_*^e \beta(\Z_e^M(V))}
\leq \dim_k \frac{F_*^e T}{F_*^e \beta(\mf m F_*^e M)}
\leq \dim_k F_*^e \frac{T}{\mf m^{[p^e]}T}.
\]
Next, since the Hilbert--Kunz function is $O(p^{e \dim (T)})$, (see \cite[Lemma~3.2]{Tucker}) we have an estimate
\[
\dim_k F_*^e (T/\mf m^{[p^e]}T) =  
[k^{1/p^e}: k] \length_R \left (T/\mf m^{[p^e]}T \right)
< C_T [k^{1/p^e}: k] p^{e (\dim R - 1)}
\]
for some constant $C_T$ that depends only on $T$. 
This further allows to bound
\[
\dim_k ((F_*^e M)/\Z_e^M(V)) < \dim_k ((F_*^e L)/\Z_e^L(V))  + C_Tp^{e (\dim R - 1)}[k^{1/p^e}: k]
\]
and the assertion follows since we may switch $L$ and $M$ in the argument. 
\end{proof}

\begin{corollary} \label{cor: existence of Z signature}
Let $(R, \mf m, k)$ be an  F-finite local ring of characteristic $p > 0$ and $M$ be a finitely generated $R$-module.
Then there exists a constant $C$ such that 
for every finitely generated $R$-module $N$, 
any submodule $\m N \subseteq V \subsetneq N$, and any $e \geq 1$ 
we have 
\[
\left |\delta(R)^{-e}\dim_k ((F_*^e M)/\Z_e^M(V)) - \Zsig{M} (V) \right | < Cp^{-e}.
\]
\end{corollary}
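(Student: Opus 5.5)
The plan is the standard Tucker-style argument: compare $F_*^{e+1}M$ with $(F_*^e M)^{\oplus \delta(R)}$ via Lemma~\ref{lem: comparison generically equal}, then show that the normalized sequence is Cauchy at rate $p^{-e}$.

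\emph{Setup.} Write $a_e(V) = \delta(R)^{-e}\dim_k (F_*^e M/\Z_e^M(V))$. Since $F_*^{e+1}M = F_*(F_*^e M)$, the natural move is to compare $F_*M$ with a direct sum of copies of $M$. First reduce to the case where $R$ is reduced, or at least where $M$ has a well-behaved generic structure on $\Minh(R)$.
\begin{itemize}
\item If $\dim M < \dim R$, then $\dim_k(F_*^eM/\Z_e^M(V)) \le \dim_k F_*^e(M/\m^{[p^e]}M) = O(p^{e(\dim R-1)}[k^{1/p^e}:k])$, uniformly in $V$. Hence $a_e(V) \to 0$ at rate $p^{-e}$ and there is nothing to prove.
\item In general, the key input is that for $\p \in \Minh(R)$ the module $(F_*M)_\p$ should be comparable to $M_\p^{\oplus \delta(R)}$. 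This follows from Kunz's formula $[\kappa(\p):\kappa(\p)^p] = p^{\dim R/\p}[k:k^p] = \delta(R)$ when $R_\p$ is a field.
\end{itemize}
For nonreduced $R_\p$ this identification can fail. I would handle that case by a filtration argument: filter $M$ by a prime filtration, or pass to $M \oplus M'$ with suitable $M'$, and use superadditivity-type inequalities. Alternatively, one can assume (as is standard in such proofs) an isomorphism $(F_*M)_\p \cong M_\p^{\oplus \delta(R)}$ on $\Minh(R)$ after replacing $M$ by $F_*^{e_0}M$, using that $R_\p$ is Artinian. \textbf{Identifying this generic isomorphism is the main obstacle.}

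\emph{Key recursion.} Assume $L := M^{\oplus \delta(R)}$ and $F_*M$ are isomorphic at every $\p \in \Minh(R)$. Then $F_*^eL = (F_*^eM)^{\oplus\delta(R)}$ and $F_*^e(F_*M) = F_*^{e+1}M$ are likewise generically isomorphic, since $F_*^e$ preserves localization at minimal primes of maximal dimension. Apply Lemma~\ref{lem: comparison generically equal} to the pair $F_*M$ and $L$, together with Proposition~\ref{prop: basic property Z}(\ref{part: Z and direct sum}), which gives $\Z_e^{L}(V) = \Z_e^M(V)^{\oplus\delta(R)}$. Note that $\Z_e^{F_*M}(V) = \Z_{e+1}^M(V)$ directly from the definitions. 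The lemma then yields a constant $C$, independent of $N$, $V$ and $e$, with
\[
\bigl|\dim_k(F_*^{e+1}M/\Z_{e+1}^M(V)) - \delta(R)\dim_k(F_*^eM/\Z_e^M(V))\bigr| < Cp^{e(\dim R-1)}[k^{1/p^e}:k].
\]
Dividing by $\delta(R)^{e+1}$ gives $|a_{e+1}(V)-a_e(V)| < C' p^{-e}$, with $C'$ independent of $V$ and $N$.

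\emph{Conclusion.} Summing the geometric series shows that $a_e(V)$ is Cauchy, uniformly in $V$ and $N$. This proves that the limit $\Zsig{M}(V)$ exists and that
\[
|a_e(V)-\Zsig{M}(V)| \le \sum_{j\ge e} C'p^{-j} = \frac{C'p}{p-1}\,p^{-e}.
\]
This is the claimed estimate with $C = C'p/(p-1)$.
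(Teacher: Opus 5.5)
Your proposal is correct and follows essentially the paper's proof. The recursion comparing $F_*M$ with $M^{\oplus\delta(R)}$ via Lemma~\ref{lem: comparison generically equal}, together with Proposition~\ref{prop: basic property Z}(\ref{part: Z and direct sum}), $\Z_e^{F_*M}(V)=\Z_{e+1}^M(V)$ and the geometric series, is exactly the paper's argument, and your ``main obstacle'' is settled by the alternative you list, not the filtration route: for $e_0\gg 0$ the module $F_*^{e_0}M$ is killed by the nilradical, so its localizations at $\p\in\Minh(R)$ are $\kappa(\p)$-vector spaces and Kunz's formula gives the generic isomorphism, after which you reindex by $e_0$ and absorb the finitely many exponents $e\le e_0$ into $C$ via $0\le\dim_k(F_*^eM/\Z_e^M(V))\le\dim_k(F_*^eM/\m F_*^eM)$.
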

\begin{proof}
First, assume that $M$ is an $\red{R}$ module. For every $\p \in \Minh(R)$, $M_\p$ and $F_*M_\p$ are $\kappa(\p)$-vector spaces, and counting dimensions using that $[\kappa(\p):\kappa(\p)^p] = \delta(R)$ (cf. \cite[Corollary~2.4]{PolstraTucker}) gives $(F_*M)_\p \cong M_\p^{\oplus\delta(R)}$.
By Lemma~\ref{lem: comparison generically equal} there exists a 
constant $C > 0$, independent of $N$ and $V$, such that 
\[
\left |
\delta(R)
\dim_k ((F_*^e M)/\Z_e^M(V))
- \dim_k ((F_*^{e} F_* M)/\Z_{e}^{F_* M}(V))
\right | 
< C[k^{1/p^e}: k]p^{e (\dim R - 1)}.
\]
After noting that $\Z_{e}^{F_* M}(V) = \Z_{e + 1}^M (V)$
and dividing by $\delta(R)^{e + 1} = [k^{1/p^{e+1}}: k]p^{(e+1)\dim R}$, we  bound
\[
\left |
\frac{\dim_k ((F_*^e M)/\Z_e^M(V))}{\delta(R)^{e}}
- \frac{\dim_k ((F_*^{e + 1} M)/\Z_{e + 1}^M(V))}{\delta(R)^{e + 1}}
\right | 
< \frac{C}{\delta(R) p^e}.
\]
Summing up these inequalities (see  \cite[Lemma~3.5]{PolstraTucker}) yields that for any $e' \geq 1$
\[
\left |
\frac{\dim_k ((F_*^e M)/\Z_e^M(V))}{\delta(R)^{e}}
- \frac{\dim_k ((F_*^{e +e'} M)/\Z_{e + e'}^M(V))}{\delta(R)^{e + e'}}
\right | 
< \frac{C}{\delta(R)} \sum_{i = 0}^{e' - 1} \frac{1}{p^{e + i}}
< \frac{2C}{\delta(R)} p^{-e} 
\]
and the assertion follows in this case after taking the limit as $e' \to \infty$.

In general, choose $e_0$ such that $F^{e_0}_*M$ is an $\red{R}$-module. From the argument above and dividing by $\delta(R)^{e_0}$, we obtain 
that the bound 
\begin{equation}\label{eq: bound in the proof of uniform convergence}
    \left |
\frac{\dim_k ((F_*^{e_0 + e} M)/\Z_{e + e_0}^M(V))}{\delta(R)^{e_0 + e}}
- \frac{\dim_k ((F_*^{e_0 + e +e'} M)/\Z_{e_0 + e + e'}^M(V))}{\delta(R)^{e_0 + e + e'}}
\right | 
<  \frac{2C}{\delta(R)^{e_0+1}p^e}
= \frac{2Cp^{e_0}}{\delta(R)^{e_0+1}}p^{-(e+e_0)}
\end{equation}
still holds for all $e,e' \geq 1$. Taking the limit as $e' \to \infty$ gives the assertion for exponents at least $e_0+1$, after adjusting $C$. The remaining finitely many exponents can be accommodated uniformly in $N$ and $V$,
since
\(
0 \leq \dim_k ((F_*^eM)/\Z_e^M(V))
\leq \dim_k ((F_*^eM)/\m F_*^eM).
\)
\end{proof}

A standard consequence of uniform convergence is the ability to interchange limits.

\begin{corollary}\label{cor: change limits in Z signature}
Let $(R, \mf m, k)$ be a local F-finite ring of characteristic $p > 0$ and $M$ be a finitely generated $R$-module. 
Then there exists a constant $C$ such that 
for any finitely generated $R$-module $N \neq 0$ and any $e \geq 1$
\[
\left | \delta(R)^{-e} \inf_V \{ \dim_k ((F_*^e M)/\Z_e^M(V)) \} - \inf_V \{\Zsig{M} (V)\} \right | < Cp^{-e},
\]
where both infima range over all submodules $\m N \subseteq V \subsetneq N$.
\end{corollary}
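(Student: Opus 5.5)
This follows from Corollary~\ref{cor: existence of Z signature} by interchanging the infimum with the limit, using that its constant is independent of $N$ and $V$. Let $C$ be the constant from Corollary~\ref{cor: existence of Z signature} for the module $M$; by construction it does not depend on $N$ or on $V$. Fix $N \neq 0$. The set of submodules $\m N \subseteq V \subsetneq N$ is then nonempty, since it contains $V = \m N$, which is proper by Nakayama's lemma. For $e \geq 1$ and such $V$, write
\[
a_e(V) = \delta(R)^{-e}\dim_k ((F_*^e M)/\Z_e^M(V)), \qquad b(V) = \Zsig{M}(V).
\]
Corollary~\ref{cor: existence of Z signature} gives $|a_e(V) - b(V)| < Cp^{-e}$ for every admissible $V$ and every $e$.

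The key step is the elementary fact that two families uniformly within $\varepsilon$ of each other have infima within $\varepsilon$. For every $V$ we have $\inf_{V'} a_e(V') \leq a_e(V) < b(V) + Cp^{-e}$. Taking the infimum over $V$ gives $\inf a_e \leq \inf b + Cp^{-e}$, and symmetrically $\inf b \leq \inf a_e + Cp^{-e}$.

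Both infima are finite, since all quantities are nonnegative, so the difference makes sense. The inequalities above are non-strict, while the statement asks for a strict one. To fix this, note that both infima are attained. For each $e$, the quantity $a_e(V)$ depends only on $V/\m N$, which ranges over the proper subspaces of the finite-dimensional space $N/\m N$. Since $\Z_e^M(V) = \Z^{F_*^eM}(V)$, this integer-valued function has bounded range and so attains its minimum. For $b$, I will instead simply replace $C$ by $2C$, which makes the inequality strict with no further argument.

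There is no real obstacle here. The only point needing care is that the constant $C$ must be independent of $N$ and $V$, and this is exactly what Corollary~\ref{cor: existence of Z signature} provides.
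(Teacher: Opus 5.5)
Your proposal is correct and follows the paper's approach: the paper gives no separate proof, calling the corollary a standard consequence of the convergence in Corollary~\ref{cor: existence of Z signature}, which is uniform in $N$ and $V$, and that is exactly the interchange of infimum and limit you carry out. The digression about the infima being attained is unnecessary, since replacing $C$ by $2C$ (legitimate because the strict bound forces $C>0$) already gives the strict inequality.
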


\subsection{Properties of F-surjective signatures}

We start studying properties of F-surjective signatures with a trivial vanishing result for F-surjective signature 
and its computation in the simplest case.

\begin{lemma}\label{lem: trivial vanishing of signature}
    Let $R$ be an F-finite Noetherian ring and $M, N$ be finitely generated $R$-modules with $N \neq 0$.
    If $\sqrt{\Ann (M)} N \neq 0$, then there are no surjections
    $F_*^e M \twoheadrightarrow N$ for $e \gg 0$.
    In particular, if $R$ is local and $\gdsig{M}{N} > 0$,
    then $\sqrt{\Ann (M)} N = 0$.
\end{lemma}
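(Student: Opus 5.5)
Let $I = \sqrt{\Ann(M)}$; since $R$ is Noetherian, $I$ is finitely generated, so there is $e_0$ with $I^{[p^{e_0}]} \subseteq I^{p^{e_0}} \cdots$ — more simply, there is $e_0$ such that $x^{p^{e_0}} \in \Ann(M)$ for every $x \in I$. The plan is to show that the image of any $R$-linear map $\phi \colon F_*^e M \to N$ with $e \geq e_0$ is killed by $I$, i.e., lies in the submodule $0 :_N I$. Indeed, for $x \in I$ and $m \in F_*^e M$, the $R$-module structure gives $x \cdot m = F_*^e(x^{p^e} m) = 0$ because $x^{p^e} = x^{p^{e-e_0}} \cdot x^{p^{e_0}}$ wait — more carefully, $x^{p^e} = (x^{p^{e_0}})^{p^{e-e_0}} \in \Ann(M)$. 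Hence $x \phi(m) = \phi(x m) = 0$, so $\phi(F_*^e M) \subseteq 0:_N I$.

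Consequently, if $\phi$ were surjective we would get $N = 0 :_N I$, i.e., $IN = 0$, contradicting the hypothesis $\sqrt{\Ann(M)}N \neq 0$. Thus no surjection $F_*^e M \twoheadrightarrow N$ exists for $e \geq e_0$. Note that this also rules out surjections $F_*^e M \twoheadrightarrow N^{\oplus n}$ for any $n \geq 1$, since composing with a projection gives a surjection onto $N$.

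For the ``in particular'' statement, assume $R$ local and $\gdsig{M}{N} > 0$. Then, since the normalized sequence has a positive limsup (indeed a limit, but only limsup is needed), there are infinitely many $e$ for which the numerator $\max\{n \mid F_*^e M \twoheadrightarrow N^{\oplus n}\}$ is at least $1$, giving surjections $F_*^e M \twoheadrightarrow N$ for arbitrarily large $e$. By the first part this forces $\sqrt{\Ann(M)}N = 0$. There is no real obstacle here; the only point requiring care is the choice of a uniform $e_0$, which uses that $I$ is finitely generated (take $e_0$ with $I^{p^{e_0}} \subseteq \Ann(M)$, using that a power of $I$ lies in $\Ann(M)$ and that $x^{p^{e_0}} \in I^{p^{e_0}}$).
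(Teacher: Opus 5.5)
Your proof is correct and is the paper's argument: once $e$ is large enough that $x^{p^e}\in\Ann(M)$ for every $x\in\sqrt{\Ann(M)}$, the module $F_*^eM$ is an $R/\sqrt{\Ann(M)}$-module, so any quotient of it, in particular $N$, is killed by $\sqrt{\Ann(M)}$. You make explicit two steps the paper leaves implicit, namely the uniform choice of $e_0$ and the passage from $\gdsig{M}{N}>0$ to surjections $F_*^eM\twoheadrightarrow N$ for infinitely many $e$; in a final write-up, delete the stray false starts such as the trailing ``$\cdots$'' and the ``wait''.
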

\begin{proof}
    If  $\sqrt{\Ann (M)} N \neq 0$, then 
    there are no surjections $F_*^e M \twoheadrightarrow N$
    for $e \gg 0$, since $F_*^e M$ are eventually $R/\sqrt{\Ann (M)}$-modules.
\end{proof}

\begin{lemma}\label{lem: F-surjective signature for artinian}
    Let $(R, \m, k)$ be an Artinian local F-finite ring of characteristic $p > 0$ and $M, N$ be finitely generated $R$-modules with $N$ nonzero. 
    Then the sequence 
    \[
    e \mapsto \max \{n \mid \text{there exists a surjection } F_*^e M \twoheadrightarrow N^{\oplus n}\}/[k:k^{p^e}] 
    \]
    converges, and 
    \[
    \gdsig{M}{N} =
    \begin{cases}
        \left\lfloor \dfrac{\length_R(M)}{\dim_k N} \right\rfloor
            & \text{if } \m N = 0 \text{ and } k = k^p,\\[10pt]
        \dfrac{\length_R(M)}{\dim_k N}
            & \text{if } \m N = 0 \text{ and } k \neq k^p,\\[6pt]
        0   & \text{otherwise}.
    \end{cases}
    \]
    If $k$ is perfect, the sequence is eventually constant.
\end{lemma}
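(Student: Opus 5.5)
The plan is to observe that for large $e$ the module $F_*^e M$ is simply a $k$-vector space of known dimension, after which the count of surjections is elementary. Since $R$ is Artinian local, $\m$ is nilpotent, say $\m^t = 0$. For $r \in \m$ and $x \in F_*^e M$ we have $r \cdot x = F_*^e(r^{p^e} x)$, and $r^{p^e} = 0$ as soon as $p^e \geq t$. Hence for all $e \geq e_0$, where $p^{e_0} \geq t$, the module $F_*^e M$ is annihilated by $\m$. Only the tail of the sequence matters for the limit, so we may restrict to $e \geq e_0$.

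Next we compute $\dim_k F_*^e M$ for such $e$. Take a composition series of $M$. Its $\length_R(M)$ factors are all isomorphic to $k$, and $F_*^e$ is exact, so $\length_R(F_*^e M) = \length_R(M)\cdot \length_R(F_*^e k) = \length_R(M)\,[k:k^{p^e}]$. Since $\m F_*^e M = 0$ for $e \geq e_0$, length equals $k$-dimension, and therefore $F_*^e M \cong k^{\oplus \length_R(M)[k:k^{p^e}]}$ as $R$-modules.

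Now we split into cases. If $\m N \neq 0$, then $\m N^{\oplus n} \neq 0$ for every $n \geq 1$. The image of any map from $F_*^e M$ is killed by $\m$, so there is no surjection $F_*^e M \twoheadrightarrow N^{\oplus n}$ with $n \ge 1$ once $e \geq e_0$. The sequence is therefore eventually $0$ (this also follows from Lemma~\ref{lem: trivial vanishing of signature}). If $\m N = 0$, then $N \cong k^{\oplus d}$ with $d = \dim_k N \geq 1$. A surjection of $k$-vector spaces $k^{\oplus \length_R(M)[k:k^{p^e}]} \twoheadrightarrow k^{\oplus dn}$ exists exactly when $dn \leq \length_R(M)[k:k^{p^e}]$. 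So the maximal $n$ is $\lfloor \length_R(M)[k:k^{p^e}]/d \rfloor$.

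Finally we take the limit of $q_e^{-1}\lfloor c\, q_e\rfloor$, where $c = \length_R(M)/d$ and $q_e = [k:k^{p^e}] = [k:k^p]^e$. If $k = k^p$, then $q_e = 1$ and the sequence is constantly $\lfloor c \rfloor$ for $e \ge e_0$, which also gives the eventual-constancy claim for perfect $k$. If $k \neq k^p$, then $q_e \geq p^e \to \infty$, and $c - 1/q_e < q_e^{-1}\lfloor c q_e\rfloor \leq c$, so the sequence converges to $c$. There is no real obstacle here. The only point requiring care is the identification $\length_R(F_*^e k) = [k:k^{p^e}]$, which uses F-finiteness, together with the remark that the finitely many terms with $e < e_0$ do not affect convergence.
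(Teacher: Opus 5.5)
Your proposal is correct and takes the same route as the paper. You make $\m$ act trivially on $F_*^eM$ for $e \gg 0$, identify $F_*^eM$ with a $k$-vector space of dimension $[k:k^{p^e}]\length_R(M)$, compute the maximum as $\lfloor [k:k^{p^e}]\length_R(M)/\dim_k N\rfloor$, and divide; the only differences are that you handle $\m N \neq 0$ directly instead of citing Lemma~\ref{lem: trivial vanishing of signature}, and you write out the length computation via a composition series.
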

\begin{proof}
By Lemma~\ref{lem: trivial vanishing of signature} 
we may assume $\m N = 0$. Now choose $e_0$ such that $\m^{[p^{e_0}]} = 0$, so that $\m F^e_*M = 0$ and $\dim_k F^e_*M = [k:k^{p^e}] \length_R(M)$ for all $e \geq e_0$. Then
\[
    \max\{n \mid \text{there exists a surjection }
    F_*^e M \twoheadrightarrow N^{\oplus n}\}
    =
    \left\lfloor
    \frac{[k:k^{p^e}]\length_R(M)}{\dim_k N}
    \right\rfloor.
    \]
    Dividing by $[k:k^{p^e}]$ gives the remaining conclusions.
\end{proof}

Extending one of the main results  in \cite{SmirnovTucker}, we will now show that the generalization of F-signature defined through $\Z_e$ coincides with the surjective F-signature. 
This comes through the linear algebra machinery obtained in \cite{SmirnovTucker} and will 
imply that the surjective F-signature is often \emph{source-independent}, i.e., it depends only on the rank of the source module. Our main ingredient is a result connecting two signatures.

\begin{corollary}\label{cor: Z formula for surjections}
Let $(R, \mf m, k)$ be a local F-finite ring of characteristic $p > 0$ and 
$M, N$ be finitely generated $R$-modules with $N \neq 0$.
Denote $b_e = \max \{n \mid \text{there exists a surjection } F_*^e M \twoheadrightarrow N^{\oplus n}\}$.
Then 
\[
b_e + (\dim_k N/\mf mN)^4 \geq 
\inf \left \{ \frac{\dim_k ((F_*^e M)/\Z_e^M(V))}{\dim_k (N/V)} \mid \mf m N \subseteq V \subsetneq N \right \} \geq 
b_e.
\]
\end{corollary}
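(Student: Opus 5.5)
This is a direct specialization of Theorem~\ref{thm:linalgsurjections}, applied exactly as in the proof of Corollary~\ref{cor: asn convergence}, with $M^{\oplus m}$ replaced by $F_*^e M$. Set $X = F_*^e M/\m F_*^e M$ and $Y = N/\m N$, and let $H \subseteq \Hom_k(X,Y)$ be the image of $\Hom_R(F_*^e M, N)$. Both $X$ and $Y$ are finite-dimensional since $R$ is F-finite. If $X = 0$ then $F_*^e M = 0$ and everything is trivial, so assume $X \neq 0$.

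\emph{Step 1: identify $b_e$ with $n(H)$.} A map $F_*^e M \to N^{\oplus n}$ is surjective if and only if its reduction modulo $\m$ is surjective, by Nakayama's lemma. Its reduction has components in $H$. Conversely, every tuple in $H^{\oplus n}$ lifts to a tuple of $R$-linear maps. Hence $b_e = n(H)$ in the notation of Theorem~\ref{thm:linalgsurjections}.

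\emph{Step 2: identify the quotient dimensions.} Nonzero quotients $\pi_Q\colon Y \twoheadrightarrow Q$ correspond to submodules $\m N \subseteq V \subsetneq N$ via $Q = N/V$, so $\dim Q = \dim_k(N/V)$. Take $\phi \in \Hom_R(F_*^e M, N)$ with reduction $h \in H$, and let $\pi\colon N \to N/V$ be the quotient map. Then $\pi\circ\phi$ factors through $X$ as $\pi_Q \circ h$, because $\phi(\m F_*^e M) \subseteq \m N \subseteq V$. Using Proposition~\ref{prop: basic property Z}(1),(2), the preimage in $F_*^e M$ of $\bigcap_{h\in H}\ker(\pi_Q\circ h)$ is exactly $\Z_e^M(V)$. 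Since $\m F_*^e M \subseteq \Z_e^M(V)$, this gives
\[
\dim_k\bigl(X/\textstyle\bigcap_{h}\ker(\pi_Q\circ h)\bigr) = \dim_k\bigl(F_*^eM/\Z_e^M(V)\bigr).
\]

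\emph{Step 3: conclude.} Substituting Steps 1 and 2 into Theorem~\ref{thm:linalgsurjections}, with $\dim W = \dim_k N/\m N$, gives $0 \le \inf - b_e < (\dim_k N/\m N)^4$. This is the claimed pair of inequalities; the infimum is attained since the ratios have bounded denominators.

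The only point needing care is Step 2, specifically that the intersection over maps in $H$ computed in $X$ matches $\Z_e^M(V)$ taken over all $R$-linear $\phi$. This holds because every $\pi\circ\phi$ kills $\m F_*^e M$. The remaining steps are bookkeeping.
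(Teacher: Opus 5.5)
Your proposal is correct and is essentially the paper's proof. The paper likewise takes $X = k\otimes_R F_*^eM$, $Y = N/\m N$ and $H$ the image of $\Hom_R(F_*^eM,N)$. It identifies $b_e$ with $n(H)$, matches $F_*^eM/\Z_e^M(V)$ with $X/\bigcap_{h\in H}\ker(\pi_Q\circ h)$ via $V\mapsto N/V$, and then applies Theorem~\ref{thm:linalgsurjections}; you additionally spell out the Nakayama/lifting step and the degenerate case $F_*^eM=0$, which the paper leaves implicit.
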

\begin{proof}
Similar to Corollary~\ref{cor: asn convergence} 
we let $X = k \otimes_R F_*^e M$, $Y = N / \mf m N$, and $H \subseteq \Hom_k (X, Y)$ consists of the maps induced by $\Hom_R (F_*^e M, N)$ so that
\[
b_e = \max \{n \mid \text{there exists a surjection } X \to Y^{\oplus n} \text{ with all components in } H\}.
\]
Now, the map $V \mapsto N/V$ is a bijection between submodules $V$ of $N$ 
in the statement and nonzero quotients $\pi_Q \colon Y \twoheadrightarrow Q$
such that $F_*^e M/\Z_e^M(V) = X/\bigcap_{h \in H} \ker (\pi_Q \circ h)$. The conclusion thus follows from Theorem~\ref{thm:linalgsurjections}.
\end{proof}

\begin{theorem}\label{thm: surjections through F-signature}
Let $(R, \mf m, k)$ be a local F-finite ring of characteristic $p > 0$ and $M, N$ be finitely generated $R$-modules with $N$ nonzero. 
Then there exists a constant $C$ and a real number $\gdsig{M}{N}$ such that 
\[
\left | \gdsig{M}{N} -
\delta(R)^{-e}\max \{n \mid \text{there exists a surjection } F_*^e M \twoheadrightarrow N^{\oplus n}\}
\right | < Cp^{-e}.
\]
Moreover, if $\delta(R) > 1$, then
\[
\gdsig{M}{N} = 
\inf \left \{ \frac{\Zsig{M} (V)}{\dim_k (N/V)} \mid \mf m N \subseteq V \subsetneq N\right \}.
\]
\end{theorem}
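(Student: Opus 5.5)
Set $b_e = \max \{n \mid F_*^e M \twoheadrightarrow N^{\oplus n}\}$ and $d = \dim_k (N/\m N)$. The plan is to sandwich $b_e$ between infima of $\Z_e$-quotients using Corollary~\ref{cor: Z formula for surjections}, and then pass to the limit using the uniform convergence in Corollary~\ref{cor: existence of Z signature}. Corollary~\ref{cor: Z formula for surjections} gives
\[
b_e \le \iota_e \coloneqq \inf_V \frac{\dim_k ((F_*^e M)/\Z_e^M(V))}{\dim_k (N/V)} \le b_e + d^4 ,
\]
where $V$ ranges over $\m N \subseteq V \subsetneq N$. This is a finite set of conditions up to the value of $\dim_k(N/V)$, which lies in $\{1,\dots,d\}$. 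By Corollary~\ref{cor: existence of Z signature}, for each $V$ we have
\[
\left|\delta(R)^{-e}\dim_k ((F_*^e M)/\Z_e^M(V)) - \Zsig{M}(V)\right| < C p^{-e}
\]
with $C$ independent of $V$. Dividing by $\dim_k(N/V) \ge 1$ and taking infima as in Corollary~\ref{cor: change limits in Z signature}, set
\[
\sigma \coloneqq \inf_V \frac{\Zsig{M}(V)}{\dim_k(N/V)} .
\]
Then $|\delta(R)^{-e}\iota_e - \sigma| < C p^{-e}$.

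\emph{Case $\delta(R) > 1$.} Here $\delta(R) = [k:k^p]p^{\dim R} \ge p$, so $\delta(R)^{-e} d^4 \le d^4 p^{-e}$. Combining this with the sandwich gives
\[
|\delta(R)^{-e} b_e - \sigma| < (C + d^4) p^{-e} .
\]
We therefore set $\gdsig{M}{N} = \sigma$, which proves both the convergence estimate and the infimum formula.

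\emph{Case $\delta(R) = 1$.} This forces $\dim R = 0$ and $k$ perfect, so $R$ is Artinian with perfect residue field. The error term $d^4$ is then not small, and the formula may genuinely fail because of the floor in Lemma~\ref{lem: F-surjective signature for artinian}. Instead, that lemma shows $b_e$ is eventually constant, equal to $\lfloor \length_R(M)/\dim_k N\rfloor$ or $0$. Taking $\gdsig{M}{N}$ to be this eventual value, the difference is zero for $e \gg 0$, and the finitely many remaining $e$ are absorbed by enlarging $C$, using the bound $b_e \le \dim_k(F^e_*M/\m F^e_* M)$.

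The main obstacle is confirming that the constant in Corollary~\ref{cor: existence of Z signature} is genuinely uniform in $V$, so that the interchange of the infimum and the limit is legitimate. That corollary asserts this uniformity, so it suffices to apply it and note that the denominators $\dim_k(N/V)$ are at least $1$. The only remaining subtlety is recognizing that the case $\delta(R)=1$ must be split off, because there the additive error $d^4$ does not vanish after normalization.
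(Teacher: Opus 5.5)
Your proof is correct and follows the paper's argument: the case $\delta(R)=1$ is handled by Lemma~\ref{lem: F-surjective signature for artinian}, and otherwise you combine the sandwich of Corollary~\ref{cor: Z formula for surjections} with the uniform-in-$V$ convergence of Corollaries~\ref{cor: existence of Z signature} and~\ref{cor: change limits in Z signature}, using $\delta(R)^{-e}(\dim_k N/\m N)^4 \le (\dim_k N/\m N)^4 p^{-e}$. Dividing by $\dim_k(N/V)\in\{1,\dots,d\}$ before taking infima is a useful clarification, since Corollary~\ref{cor: change limits in Z signature} is stated without these denominators; note also that after taking infima the strict inequality only survives as $\le$, which is harmless once $C$ is enlarged.
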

\begin{proof}
    The first claim follows easily from Lemma~\ref{lem: F-surjective signature for artinian} if $\delta(R) \coloneqq [k: k^{p}]p^{\dim R} = 1$.  
    Otherwise, the claim follows from Corollaries~\ref{cor: Z formula for surjections}, \ref{cor: change limits in Z signature}, 
    since $(\dim_k N/\m N)^4/\delta(R)^e \to 0$. 
\end{proof}

\begin{corollary}\label{cor: determined by minimal primes}
Let $(R, \mf m)$ be a local F-finite ring of characteristic $p > 0$ and $M, M', N$ be finitely generated $R$-modules.
If $M_\mf p \cong M'_\mf p$ for all $\mf p \in \Minh (R)$, then $\gdsig{M}{N} = \gdsig{M'}{N}$.  
\end{corollary}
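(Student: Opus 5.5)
The plan is to split according to whether $\delta(R) > 1$ or $\delta(R) = 1$. If $N = 0$, both signatures are $\infty$ by convention, so we may assume $N \neq 0$.

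\emph{Case $\delta(R) > 1$.} By Theorem~\ref{thm: surjections through F-signature},
\[
\gdsig{M}{N} = \inf_V \frac{\Zsig{M}(V)}{\dim_k(N/V)},
\]
with the analogous formula for $M'$, and in both the infimum runs over $\m N \subseteq V \subsetneq N$. It therefore suffices to show that $\Zsig{M}(V) = \Zsig{M'}(V)$ for every such $V$. This is immediate from Lemma~\ref{lem: comparison generically equal} applied with $L = M'$. The lemma gives a constant $C$, independent of $V$ and $e$, with
\[
\left|\dim_k (F_*^e M/\Z_e^M(V)) - \dim_k(F_*^e M'/\Z_e^{M'}(V))\right| < C p^{e(\dim R -1)}[k^{1/p^e}:k].
\]
Dividing by $\delta(R)^e = [k^{1/p^e}:k]p^{e\dim R}$ bounds the difference by $Cp^{-e} \to 0$. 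Passing to the limit, which exists by Corollary~\ref{cor: existence of Z signature}, gives equality of the $\Zsig{}$ invariants, and hence of the infima.

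\emph{Case $\delta(R) = 1$.} Here $\dim R = 0$ and $k$ is perfect, so $R$ is Artinian. Then $\Minh(R) = \{\m\}$, and the hypothesis reads $M \cong M_\m \cong M'_\m \cong M'$. The conclusion is trivial, or it can be read off Lemma~\ref{lem: F-surjective signature for artinian}, since that formula depends only on $\length_R(M)$.

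The main point to check is the uniformity in $V$ in the first case. Lemma~\ref{lem: comparison generically equal} provides exactly this uniformity, so I expect no real obstacle. Strictly, uniformity is not even needed, because there are only pointwise limits inside an infimum over a fixed family, and pointwise equality of the $\Zsig{}$ values already suffices.
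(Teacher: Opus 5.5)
Your proof is correct and follows the paper's argument. The paper likewise disposes of $N=0$ and the Artinian case, where $\Minh(R)=\{\m\}$ forces $M\cong M'$, and otherwise combines Theorem~\ref{thm: surjections through F-signature}, Lemma~\ref{lem: comparison generically equal}, and Corollary~\ref{cor: existence of Z signature} exactly as you do. Splitting along $\delta(R)=1$ versus $\delta(R)>1$ instead of Artinian versus non-Artinian is an inessential variation.
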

\begin{proof}
The claim is trivial if $N = 0$ or if $R$ is Artinian. 
In the remaining cases, the conclusion follows from Theorem~\ref{thm: surjections through F-signature}, Lemma~\ref{lem: comparison generically equal}, and 
Corollary~\ref{cor: existence of Z signature}. 
\end{proof}

\begin{corollary}\label{cor: rank formula}
Let $(R, \mf m)$ be a local F-finite ring of characteristic $p > 0$ with $\delta(R) > 1$ and $M, N$ be finitely generated $R$-modules. 
Then for any positive integer $r$ we have 
\[
\gdsig{M^{\oplus r}}{N} = r \gdsig{M}{N}.
\]
Moreover, if $R$ is reduced, $N \neq 0$, and $M$ has generic rank $r$, then $\gdsig{M}{N} = r \gdsig{R}{N}$. 
\end{corollary}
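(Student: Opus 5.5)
We prove the additivity $\gdsig{M^{\oplus r}}{N} = r\,\gdsig{M}{N}$ through the $\Z$-description of Theorem~\ref{thm: surjections through F-signature}, which is available because $\delta(R) > 1$. We then deduce the rank formula from Corollary~\ref{cor: determined by minimal primes}.

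If $N = 0$, both sides are $\infty$ and there is nothing to prove, so assume $N \neq 0$. By Proposition~\ref{prop: basic property Z}(\ref{part: Z and direct sum}), applied inductively,
\[
\Z_e^{M^{\oplus r}}(V) = \Z_e^M(V)^{\oplus r}
\]
for every submodule $\m N \subseteq V \subsetneq N$. Since $F_*^e(M^{\oplus r}) = (F_*^e M)^{\oplus r}$, this gives
\[
\dim_k \bigl(F_*^e M^{\oplus r}/\Z_e^{M^{\oplus r}}(V)\bigr) = r \dim_k \bigl(F_*^e M/\Z_e^M(V)\bigr).
\]
Dividing by $\delta(R)^e$ and letting $e \to \infty$ yields $\Zsig{M^{\oplus r}}(V) = r\,\Zsig{M}(V)$, with both limits existing by Corollary~\ref{cor: existence of Z signature}. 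Now divide by $\dim_k(N/V)$ and take the infimum over $V$. Because $r$ is a positive constant, it factors out of the infimum, and the formula $\gdsig{M}{N} = \inf_V \Zsig{M}(V)/\dim_k(N/V)$ from Theorem~\ref{thm: surjections through F-signature} gives the first claim. This hypothesis $\delta(R) > 1$ is genuinely needed: over a perfect field the floor in Lemma~\ref{lem: F-surjective signature for artinian} breaks additivity.

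For the second claim, suppose $R$ is reduced and $M$ has generic rank $r$. For each $\p \in \Minh(R)$ the ring $R_\p$ is a field, namely $\kappa(\p)$, because $R$ is reduced. Hence $M_\p$ is a $\kappa(\p)$-vector space of dimension $r$, so $M_\p \cong R_\p^{\oplus r} = (R^{\oplus r})_\p$. Corollary~\ref{cor: determined by minimal primes} then gives $\gdsig{M}{N} = \gdsig{R^{\oplus r}}{N}$. If $r \geq 1$, the first part shows this equals $r\,\gdsig{R}{N}$.

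The only point needing care is the degenerate case $r = 0$, where the first part does not directly apply. In that case $M$ vanishes at every prime of $\Minh(R)$, so Corollary~\ref{cor: determined by minimal primes} compares $M$ with the zero module and gives $\gdsig{M}{N} = \gdsig{0}{N}$. Since $N \neq 0$, there are no surjections from $0$ onto $N^{\oplus n}$ for $n \geq 1$, so $\gdsig{0}{N} = 0 = 0 \cdot \gdsig{R}{N}$.

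Note that $\gdsig{R}{N}$ is finite: $b_e \le \dim_k(F_*^e R/\m F_*^e R)$ for $N \neq 0$, and this grows like $\delta(R)^e$. So $0 \cdot \gdsig{R}{N} = 0$ is meaningful, and the formula holds in this case as well.
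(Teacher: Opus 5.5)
Your proof is correct and follows essentially the same route as the paper. Both arguments use additivity of $\Z_e$ over direct sums together with the infimum formula of Theorem~\ref{thm: surjections through F-signature} for the first claim, and then Corollary~\ref{cor: determined by minimal primes} to replace $M$ by $R^{\oplus r}$. Your separate treatment of the degenerate case $r = 0$, including the finiteness of $\gdsig{R}{N}$, is a small point the paper leaves implicit.
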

\begin{proof}
    We may assume that $N \neq 0$, otherwise the assertion is trivial. 
    By Proposition~\ref{prop: basic property Z}(\ref{part: Z and direct sum}),  $\Zsig{M^{\oplus r}} (V) = r \Zsig{M} (V)$ for all $\m N \subseteq V \subset N$.
    Thus $\gdsig{M^{\oplus r}}{N} = r \gdsig{M}{N}$ 
    by Theorem~\ref{thm: surjections through F-signature}.

    The second claim now follows from Corollary~\ref{cor: determined by minimal primes}, since 
    $\gdsig{M}{N} = \gdsig{R^{\oplus r}}{N} = r \gdsig{R}{N}$. 
\end{proof}

\begin{remark}
    We note that the asymptotic surjective number is not source-independent. Namely, if $R = k[[x^3, x^2y, y^2x, y^3]]$, then, as observed in \cite[Example~3.17]{Sannai}, $\asn {(x^2, xy, y^2)}{(x,y)} = 0$ where $(x^2, xy, y^2)$ and $(x, y)$ are $R$-submodules of $k[[x,y]]$.
\end{remark}

\begin{corollary}\label{cor: superadditive}
    Let $(R, \mf m)$ be a local F-finite ring of characteristic $p > 0$ and 
    \[0 \to L \to M \to N \to 0\] 
    be an exact sequence of finitely generated $R$-modules.
    Then for any finitely generated $R$-module $X$ we have 
    $\gdsig{M}{X} \geq \gdsig{L}{X} + \gdsig{N}{X}$.

    Moreover, the equality holds if $X$ is cyclic.
\end{corollary}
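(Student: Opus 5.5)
We plan to prove superadditivity through the $\Z_e$-description of Theorem~\ref{thm: surjections through F-signature}. If $X = 0$ the statement is trivial, and if $\delta(R) = 1$ it reduces to Lemma~\ref{lem: F-surjective signature for artinian}. In that case $R$ is Artinian with $k$ perfect, and additivity of length gives $\length(M) = \length(L) + \length(N)$. Superadditivity then follows from $\lfloor a+b\rfloor \geq \lfloor a \rfloor + \lfloor b\rfloor$, and when $X$ is cyclic we have $\dim_k X = 1$, so equality holds. So assume $\delta(R) > 1$, where
\[
\gdsig{M}{X} = \inf_V \frac{\Zsig{M}(V)}{\dim_k(X/V)}
\]
over $\m X \subseteq V \subsetneq X$. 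It then suffices to show, for each fixed $V$, the inequality $\Zsig{M}(V) \geq \Zsig{L}(V) + \Zsig{N}(V)$. Granting this, dividing by $\dim_k(X/V)$ and taking infima gives the claim, since the infimum of a sum is at least the sum of the infima.

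For the fixed-$V$ inequality, write $\alpha\colon L \to M$ and $\beta\colon M \to N$. As in the proof of Lemma~\ref{lem: comparison generically equal}, Proposition~\ref{prop: basic property Z}(4) gives a right exact sequence
\[
F_*^e L/\Z_e^L(V) \to F_*^e M/\Z_e^M(V) \to F_*^e N/F_*^e\beta(\Z_e^M(V)) \to 0 .
\]
This yields $\dim_k(F_*^eM/\Z_e^M(V)) \geq \dim_k(F_*^eN/F_*^e\beta(\Z_e^M(V)))$, plus the length of the image of the first map. There are two issues. First, $\beta(\Z_e^M(V)) \subseteq \Z_e^N(V)$, so the cokernel term is at least $\dim_k F_*^eN/\Z_e^N(V)$. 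Second, the image of the first map is $F_*^eL/(F_*^eL \cap \Z_e^M(V))$. We need $\dim_k$ of this image to be at least $\dim_k F_*^eL/\Z_e^L(V)$ up to $o(\delta(R)^e)$. This is the \emph{main obstacle}: $F_*^eL \cap \Z_e^M(V)$ can be strictly larger than $\Z_e^L(V)$, because maps $F_*^eL \to X$ need not extend to $F_*^eM$.

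To get around this, we would instead argue directly with surjections. Given surjections $F_*^eL \twoheadrightarrow X^{a}$ and $F_*^eN \twoheadrightarrow X^{b}$, we want a surjection $F_*^eM \twoheadrightarrow X^{a+b-c}$ with $c$ small. The component $X^b$ is obtained by composing with $F_*^e\beta$. For the $L$-part, the obstruction to extending lies in $\mathrm{Ext}^1_R(F_*^eN, X)$. Alternatively, we can use the first approach with error control: the extension failure is governed by an $\mathrm{Ext}^1$ term. We would try to bound the defect $\dim_k\bigl(F_*^eL\cap\Z_e^M(V)\bigr)/\Z_e^L(V)$ by $o(\delta(R)^e)$ via a filtration or prime-cyclic reduction. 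Failing that, the cleaner route is to compare with the decomposition $M \cong L \oplus N$ generically. However, Corollary~\ref{cor: determined by minimal primes} requires $M_\p \cong L_\p \oplus N_\p$ at $\p \in \Minh(R)$, which holds only when $R_\p$ is a field, i.e., after reducing to $R$ reduced. To make that reduction we would replace the modules by $F_*^{e_0}$ of them, which are $\red{R}$-modules, and use $\gdsig{F_*^{e_0}M}{X} = \delta(R)^{e_0}\gdsig{M}{X}$. Exactness is preserved by $F_*^{e_0}$, and the formula $\gdsig{F^{e_0}_*M}{X} = \delta(R)^{e_0}\gdsig{M}{X}$ follows from the limit definition. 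Over the generic points, which are now fields, $M_\p \cong L_\p \oplus N_\p$, so Corollary~\ref{cor: determined by minimal primes} and Proposition~\ref{prop: basic property Z}(5) give
\[
\gdsig{M}{X} = \gdsig{L\oplus N}{X} = \inf_V \frac{\Zsig{L}(V)+\Zsig{N}(V)}{\dim_k (X/V)} \geq \gdsig{L}{X}+\gdsig{N}{X}.
\]
This resolves the obstacle.

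Equality for cyclic $X = R/I$: then $\dim_k X/\m X \leq 1$, so the only $V$ is $\m X$ and $\dim_k(X/V) = 1$. The infimum is a single term, so $\gdsig{M}{X} = \Zsig{L}(\m X) + \Zsig{N}(\m X) = \gdsig{L}{X} + \gdsig{N}{X}$.
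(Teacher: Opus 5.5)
Your final argument is correct and is essentially the paper's proof. You push forward by $F^{e_0}_*$ so that the sequence splits at the minimal primes, invoke Corollary~\ref{cor: determined by minimal primes} to get $\gdsig{M}{X}=\gdsig{L\oplus N}{X}$, and for cyclic $X$ use that $\m X$ is the only admissible $V$ together with Proposition~\ref{prop: basic property Z}(5). The only differences are cosmetic: you derive the direct-sum inequality from the infimum formula rather than by concatenating surjections, and you handle $\delta(R)=1$ via the explicit floor formula; there, for cyclic $X$ with $\m X\neq 0$, all three signatures simply vanish.
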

\begin{proof}
    For large $e$,  $F_*^e M$ and $F_*^e  (L\oplus N)$ are isomorphic at the minimal primes since the localized sequence splits. Thus, $\gdsig{M}{X} = \gdsig{L \oplus N}{X}$
    by Corollary~\ref{cor: determined by minimal primes}.
    The inequality $\gdsig{L \oplus N}{X} \geq \gdsig{L}{X} + \gdsig{N}{X}$ is obvious by working with surjections. 

    For the second claim,
     assume that $X \neq 0$ is cyclic. If $\delta(R) = 1$, then $R$ is Artinian with perfect residue field. By \autoref{lem: F-surjective signature for artinian}, the signature onto $X$ equals the length of the source when $X \cong k$, and vanishes otherwise, so that equality follows. If $\delta(R)>1$, then by  
    \autoref{thm: surjections through F-signature} and 
\autoref{prop: basic property Z}(\ref{part: Z and direct sum})
    \[
    \gdsig{L \oplus N}{X} = \Zsig{L \oplus N} (\m X)
    = \Zsig{L} (\m X) + \Zsig{N} (\m X)
    = \gdsig{L}{X} + \gdsig{N}{X}. \qedhere
    \]
\end{proof}

\begin{remark}
    The additivity does not always hold. 
    For example, if $R = \mathbb{F}_p[[x,y]]/(xy)$, $L = R/(x)$, and $N = R/(y)$, then $\Hom_R (N, L) = \Hom_R (L, N) = 0$ and it follows easily that $\gdsig{N}{N \oplus L} = \gdsig{L}{N \oplus L} = 0$, while $\gdsig{N\oplus L}{N \oplus L} = 1$. 
\end{remark}

The following result is inspired by \cite[Corollary~4.5]{HashimotoDual}. 

\begin{corollary}\label{cor: same support transfer}
    Let $(R, \mf m)$ be a local F-finite ring of characteristic $p > 0$ with $\delta(R) > 1$ and $M, N, X$ be finitely generated $R$-modules.
    If $\Supp (M) \subseteq \Supp (N)$ 
    and $\gdsig{M}{X} > 0$ then $\gdsig{N}{X} > 0$.  
\end{corollary}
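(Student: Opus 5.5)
The plan is to pass from $M$ to $N$ through a chain of elementary comparisons. Each comparison uses one of two kinds of tool. The first kind is monotonicity under quotients and direct sums. The second is the superadditivity and rank formulas already proved, namely Corollary~\ref{cor: superadditive} and Corollary~\ref{cor: rank formula}; the latter applies because $\delta(R) > 1$. We may assume $X \neq 0$. We use three elementary observations.

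\emph{(a) Quotients.} If $L \twoheadrightarrow L'$, then every surjection $F_*^e L' \twoheadrightarrow X^{\oplus n}$ lifts to one from $F_*^e L$, so $\gdsig{L}{X} \geq \gdsig{L'}{X}$.

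\emph{(b) Frobenius twists.} Since $\gdsig{L}{X}$ is a genuine limit (Theorem~\ref{thm: surjections through F-signature}), we have $\gdsig{F_*^{e_0}L}{X} = \delta(R)^{e_0}\,\gdsig{L}{X}$. This is a reindexing, because $F_*^{e}F_*^{e_0}L = F_*^{e+e_0}L$.

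\emph{(c) Direct sums.} By Corollary~\ref{cor: rank formula}, $\gdsig{L^{\oplus r}}{X} = r\,\gdsig{L}{X}$.

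Set $I = \Ann(N)$. The hypothesis $\Supp(M) \subseteq \Supp(N)$ gives $I \subseteq \sqrt{\Ann(M)}$. Hence $I^t M = 0$ for some $t$, and $M$ is a quotient of $(R/I^t)^{\oplus g}$ with $g = \mu(M)$. By (a) and (c), $g\,\gdsig{R/I^t}{X} \geq \gdsig{M}{X} > 0$.

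Next choose $q = p^{e_0}$ large enough that $I^{[q]} \subseteq I^t$; for instance $q \geq t\,\mu(I)$ works by pigeonhole. Then $R/I^t$ is a quotient of $R/I^{[q]}$, so by (a) we get $\gdsig{R/I^{[q]}}{X} > 0$. There is an isomorphism of $R$-modules
\[
F_*^{e_0}(R/I^{[q]}) \cong F_*^{e_0}R \otimes_R R/I.
\]
Writing $F_*^{e_0}R$ as a quotient of $R^{\oplus d}$, this module is a quotient of $(R/I)^{\oplus d}$. Combining (b), (a) and (c) gives
\[
d\,\gdsig{R/I}{X} \;\geq\; \gdsig{F_*^{e_0}(R/I^{[q]})}{X} \;=\; \delta(R)^{e_0}\,\gdsig{R/I^{[q]}}{X} \;>\; 0.
\]

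Finally, let $n_1, \dots, n_c$ generate $N$. The map $r \mapsto (rn_1, \dots, rn_c)$ gives an injection $R/I \hookrightarrow N^{\oplus c}$, which fits in a short exact sequence $0 \to R/I \to N^{\oplus c} \to C \to 0$. Superadditivity (Corollary~\ref{cor: superadditive}) then yields
\[
c\,\gdsig{N}{X} = \gdsig{N^{\oplus c}}{X} \;\geq\; \gdsig{R/I}{X} + \gdsig{C}{X} \;\geq\; \gdsig{R/I}{X} \;>\; 0,
\]
where the first equality is (c). This finishes the argument.

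The step that needs the most care is converting the nilpotent thickening $R/I^t$ into the reduced-type module $R/I$. That is exactly where Frobenius, through the bracket power $I^{[q]}$, and the scaling (b) enter; the rest is formal. In carrying out (b), the only thing to verify is that the normalization by $\delta(R)^{e}$ indeed gives the factor $\delta(R)^{e_0}$ and does not introduce any defect. Since the limit exists by Theorem~\ref{thm: surjections through F-signature}, this is just a shift of the index.
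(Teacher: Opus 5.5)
Your argument is correct, but it is organized differently from the paper's proof.

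The paper first uses Lemma~\ref{lem: trivial vanishing of signature} to see that $X$ is an $\red{R}$-module. It then replaces $M$ and $N$ by high Frobenius pushforwards to reduce to $R$ reduced, and works directly at the primes of $\Minh(R)$, where everything is a $\kappa(\p)$-vector space. The support hypothesis lets one choose $n$ with $n\dim_{\kappa(\p)} N\otimes_R\kappa(\p)\ge \dim_{\kappa(\p)} M\otimes_R\kappa(\p)$, and a module $M'$ with $(N^{\oplus n})_\p\cong (M\oplus M')_\p$ for all $\p\in\Minh(R)$. Corollary~\ref{cor: determined by minimal primes} and Corollary~\ref{cor: rank formula} then give $n\,\gdsig{N}{X}=\gdsig{M\oplus M'}{X}\ge \gdsig{M}{X}>0$ in one line.

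You never reduce to the reduced case and never count ranks. Instead you encode the hypothesis as $\Ann(N)\subseteq\sqrt{\Ann(M)}$ and walk along the explicit chain $(R/I^t)^{\oplus g}\twoheadrightarrow M$, $R/I^{[q]}\twoheadrightarrow R/I^t$, $(R/I)^{\oplus d}\twoheadrightarrow F^{e_0}_*(R/I^{[q]})$, $R/I\hookrightarrow N^{\oplus c}$. The bracket power, together with the scaling $\gdsig{F^{e_0}_*L}{X}=\delta(R)^{e_0}\gdsig{L}{X}$, absorbs the nilpotent thickening. This is the role the pushforward-to-$\red{R}$ step plays in the paper.

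The generic-rank input enters your proof only through Corollary~\ref{cor: superadditive}, which is itself a consequence of Corollary~\ref{cor: determined by minimal primes}. You use it to make the signature monotone along the injection $R/I\hookrightarrow N^{\oplus c}$. So the paper's route is shorter and goes straight to the minimal primes. Yours is more formal: it uses only quotients, one short exact sequence, and rescaling, and it isolates exactly where Frobenius is needed.

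One small slip: $I^{[q]}\subseteq I^t$ holds simply because $I^{[q]}\subseteq I^q\subseteq I^t$ once $q\ge t$. Pigeonhole gives inclusions in the other direction, such as $I^{\mu(I)(q-1)+1}\subseteq I^{[q]}$, and is not needed here.
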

\begin{proof}
    By \autoref{lem: trivial vanishing of signature} 
    $X$ is an $\red{R}$-module, so, after replacing $M$ and $N$ with high enough pushforwards, we may assume that $R$ is reduced. 
    Fix a positive integer $n$ such that for all $\p \in \Minh(R)$
    \[
    n \dim_{\kappa(\p)} N \otimes_R \kappa(\p) \geq \dim_{\kappa(\p)} M \otimes_R \kappa(\p).
    \]
    Then there exists a finitely generated $R$-module $M'$
    such that $(N^{\oplus n})_{\p} \cong (M \oplus M')_{\p}$ for all $\p \in \Minh(R)$.
    Hence, 
    \[
    n \gdsig{N}{X} = \gdsig{N^{\oplus n}}{X}
    = \gdsig{M\oplus M'}{X}
    \geq \gdsig{M}{X} + \gdsig{M'}{X} > 0.
    \]
\end{proof}

\begin{remark}\label{rem: signature via asn}
    If $\delta(R) > 1$, then it follows from Theorem~\ref{thm: surjections through F-signature} and Corollary~\ref{cor: asn convergence} 
    that 
    \[
    \gdsig{M}{N}  = \lim_{e \to \infty} \frac{\asn {F_*^e M} {N}}{\delta(R)^{e}}.
    \]
    On the other hand, if $R = k$ is a perfect field then
    \[
    \gdsig{k}{k^{\oplus 2}} = 0 \neq \frac 12 = \asn{k}{k^{\oplus 2}} = \inf \left \{ \frac{\Zsig{k} (V)}{\dim_k (k^{\oplus 2}/V)} \mid V \subsetneq k^{\oplus 2} \right \}.
    \]

    One could argue that $ \lim_{e \to \infty} \delta(R)^{-e}\asn {F_*^e M} {N}$ should be taken as the definition of $\gdsig{M}{N}$, since 
    it would have removed the restriction $\delta(R) > 1$
    in several preceding results. 
\end{remark}

\section{Applications}

\subsection{Module-finite subrings of a regular ring}

\begin{corollary}\label{cor: generalized for finite}
    Let $(R, \m,k)$ be an F-finite reduced equidimensional local ring of characteristic $p > 0$.
    Suppose that $R \subseteq S$ is a module-finite inclusion with $S$ regular and semi-local, and assume that $S$ is torsion-free of constant generic rank $n$ over $R$. 
    If $0 \neq M$ is a finitely generated $R$-module of constant generic rank $r$, 
    then
    $\gdsig{M}{M} = \frac{r}{n}\asn {S}{M} $. In particular, it is a rational number. 
\end{corollary}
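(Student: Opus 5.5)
We will compare $F_*^e M$ with direct sums of copies of $S$, using the regularity of $S$ to control $F_*^e S$ exactly. The proof runs in three steps, with the third expected to be the main obstacle.

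\textbf{Step 1: source independence.} By Corollary~\ref{cor: rank formula}, $\gdsig{M}{M} = r\,\gdsig{R}{M}$ and $\gdsig{S}{M} = n\,\gdsig{R}{M}$, since $S$ has generic rank $n$ over the reduced ring $R$. It therefore suffices to show
\[
\gdsig{S}{M} = \asn{S}{M}.
\]
This needs $\delta(R)>1$. If $\delta(R)=1$, then $R$ is a perfect field, $S$ is a finite product of fields finite over $R$, and the identity can be checked directly. (Strictly, in that degenerate case floors appear, as in Lemma~\ref{lem: F-surjective signature for artinian}, and it may have to be excluded or handled separately.)

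\textbf{Step 2: Frobenius pushforwards of $S$.} Since $S$ is regular and F-finite (being module-finite over $R$), Kunz's theorem makes $F_*^e S$ a projective $S$-module. Because $S$ is semi-local, I expect $F_*^e S$ to be free of some rank $a_e$, at least after localizing at each maximal ideal; a rank mismatch across maximal ideals would need care. Computing generic ranks over $R$ gives $a_e\, n = \rank_R F_*^e S = n\,\delta(R)^e$. Here I use that $[\kappa:\kappa^{p^e}]$ is the same for $R$ and $S$ at generic points, since $S$ is generically finite over $R$ with equidimensionality. Hence $a_e = \delta(R)^e$, and as $R$-modules $F_*^e S \cong S^{\oplus \delta(R)^e}$.

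\textbf{Step 3: comparing with the asymptotic surjective number.} Step 2 gives
\[
\max\{m \mid F_*^e S \twoheadrightarrow M^{\oplus m}\} = \max\{m \mid S^{\oplus \delta(R)^e} \twoheadrightarrow M^{\oplus m}\}.
\]
By Corollary~\ref{cor: asn convergence}, this quantity divided by $\delta(R)^e$ differs from $\asn{S}{M}$ by at most $(\dim_k M/\m M)^4/\delta(R)^e$, which tends to $0$. This yields $\gdsig{S}{M} = \asn{S}{M}$, and hence $\gdsig{M}{M} = \frac{r}{n}\asn{S}{M}$. Rationality then follows from Corollary~\ref{cor: asn convergence}, since $R$ is local.

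The main obstacle is Step 2: justifying that $F_*^e S$ is actually free over the semi-local ring $S$ of the right rank. I will first try to show that the rank of $F_*^e S$ at each maximal ideal $\n$ of $S$ equals $[\kappa(\n):\kappa(\n)^{p^e}]p^{e\dim S_\n}$, and that this is constant. This should follow from Kunz's formula and equidimensionality, using that each residue field $\kappa(\n)$ is finite over $k$ and $\dim S_\n = \dim R$. A projective module of constant rank over a semi-local ring is free. If constancy fails, I would fall back on using only the generic-rank source independence over $R$ (Corollary~\ref{cor: determined by minimal primes}): replace $F_*^e S$ by any module agreeing with $S^{\oplus\delta(R)^e}$ at minimal primes of $R$, which suffices for the limit but must be combined carefully with uniform estimates.
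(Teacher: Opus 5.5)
Your proposal is correct and follows essentially the same route as the paper: the rank formula reduces everything to $\gdsig{S}{M}=\asn{S}{M}$, freeness $F_*^e S\cong S^{\oplus\delta(R)^e}$ comes from Kunz's theorem plus semi-locality, and the estimate of Corollary~\ref{cor: asn convergence} with $m=\delta(R)^e$ finishes. For Step 2, the paper computes the rank of the projective module $F_*^e S$ at the minimal primes of $S$ instead of at the maximal ideals $\n$; these minimal primes lie over minimal primes of $R$ by torsion-freeness, and $p$-degree is invariant under finite field extensions, so this avoids checking $\dim S_\n=\dim R$. In the case $\delta(R)=1$ nothing has to be excluded, since $\gdsig{M}{M}=\lfloor r/r\rfloor=1$ and $\asn{S}{M}=n/r$.
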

\begin{proof}
    By \cite[Corollary~2.4]{PolstraTucker}, applied to
$R/P$ for $P\in\Min(R)$, and invariance of $p$-degree
under finite field extensions, $F_*^e S$ has constant
generic rank $\delta(R)^e$ over $S$.
    Since $S$ is regular and semi-local, it follows that $F^e_* S \cong S^{\oplus \delta(R)^e}$. If $\delta(R) > 1$, then
    \[
        \gdsig{S}{M} = \lim_{e \to \infty} \delta(R)^{-e}\max\{b\mid \mbox{ there exists  a surjection }S^{\oplus\delta(R)^e}
                         \twoheadrightarrow M^{\oplus b}\}= \asn{S}{M}.
    \]
    By Corollary~\ref{cor: rank formula}, 
    \[
    \gdsig{S}{M}=n\gdsig{R}{M}
    \qquad \mbox{ and } \qquad
    \gdsig{M}{M}=r\gdsig{R}{M},
    \]
    which proves the desired formula. If instead $\delta(R) = 1$, then $R$ is a perfect field, and the result follows after computing $\asn{S}{M}=n/r$ and $\gdsig{M}{M}=1$.
\end{proof}

\begin{corollary}\label{cor: dsignature for finite subring}
    Let $(R,\m)$ be a Cohen-Macaulay local ring of characteristic $p>0$ with canonical module $\omega_R$, and let $R\subseteq S$ be a module-finite inclusion. Suppose that either:
    \begin{enumerate}
        \item \label{firstpartcordsigfinitesubring} $R$ is $F$-finite and reduced, $S$ is regular and semilocal, and $S$ is torsion-free of constant generic rank $n$ over $R$; or,
        \item 
        \label{secondpartcordsigfinitesubring}
        $R$ is a domain and $S$ is regular and local, with $n = [S:R]$.
    \end{enumerate}
    Then
    \[
        \dsig(R) = \frac{\asn{S}{\omega_R}}{n}.
    \]
    In particular, $\dsig(R)$ is rational.
\end{corollary}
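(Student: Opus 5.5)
Part (1) follows almost immediately from Corollary~\ref{cor: generalized for finite} applied with $M = \omega_R$. Since $R$ is Cohen--Macaulay and reduced, $\omega_R$ has constant generic rank $r = 1$. This holds because $(\omega_R)_\p \cong \omega_{R_\p} \cong R_\p$ for every minimal prime $\p$: each $R_\p$ is a field, hence Gorenstein. The ring $R$ is equidimensional, being Cohen--Macaulay and local. The corollary then gives $\dsig(R) = \gdsig{\omega_R}{\omega_R} = \frac{1}{n}\asn{S}{\omega_R}$. Rationality follows from Corollary~\ref{cor: asn convergence}, since $R$ is local and $\omega_R \neq 0$. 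The only point to check is that $\dsig(R)$ as defined here agrees with Sannai's limit in the F-finite case, which is the convention in the preliminaries.

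For part (2) I would reduce to part (1) through completion and the $\Gamma$-construction, using Theorem~\ref{thm: Lyu's gamma}. The steps are as follows.

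\textbf{Complete.} The invariant $\dsig$ is unchanged under completion. By Proposition~\ref{prop: asn under flat extension} applied to $R \to \widehat R$, together with $S \otimes_R \widehat R \cong \widehat S$ (as $S$ is local and module-finite over $R$) and $\omega_R \otimes_R \widehat R \cong \omega_{\widehat R}$, we may replace $R$ and $S$ by their completions. Completion may destroy the domain property of $R$, but $\widehat S$ is a regular local ring, hence a domain. So $\widehat R \subseteq \widehat S$ is still an extension in which $\widehat R$ is reduced and $\widehat S$ is torsion-free of generic rank $n$ over every minimal prime, computed via $\widehat S \otimes_R \operatorname{Frac}(R)$. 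This torsion-freeness claim needs some care, and I would verify it using flatness of $R \to \widehat R$.

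\textbf{Choose $A$ and pass to $\Gamma$.} Pick a coefficient field and a Noether normalization $A \subseteq \widehat R$. Then $\widehat S$ is module-finite over $A$, and we form $\widehat{R^\Gamma} \subseteq \widehat{S^\Gamma} = \widehat S \cotimes{A} \widehat{A^\Gamma}$. The ring $\widehat{S^\Gamma}$ is regular and semilocal, since it is flat over $\widehat S$ with closed fibre $\widehat S/\m_S \otimes k^\Gamma$, a product of fields. It is F-finite and torsion-free of generic rank $n$ over $\widehat{R^\Gamma}$, which is reduced (Hochster--Huneke) and Cohen--Macaulay with canonical module $\omega_{\widehat R}\otimes \widehat{R^\Gamma}$.

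\textbf{Conclude.} Part (1) yields $\dsig(\widehat{R^\Gamma}) = \frac1n \asn{\widehat{S^\Gamma}}{\omega_{\widehat{R^\Gamma}}}$. Proposition~\ref{prop: asn under flat extension} for the faithfully flat local map $\widehat R \to \widehat{R^\Gamma}$ then shows that the right-hand side equals $\frac1n\asn{\widehat S}{\omega_{\widehat R}}$, independently of $\Gamma$. Finally, Theorem~\ref{thm: Lyu's gamma} gives $\dsig(\widehat R) = \sup_\Gamma \dsig(\widehat{R^\Gamma})$, so we obtain the formula.

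The main obstacle is the bookkeeping in the $\Gamma$ step. One must check that $\widehat{R^\Gamma}$ is local, reduced, and Cohen--Macaulay for small enough $\Gamma$. One must also check that $\widehat{S^\Gamma}$ is regular semilocal and torsion-free of the same constant rank $n$. I would handle reducedness by shrinking $\Gamma$ as in \cite[6.11]{HochsterHuneke2}. The sup in Theorem~\ref{thm: Lyu's gamma} can be restricted to such $\Gamma$, since it is achieved along any cofinal family of small $\Gamma$.
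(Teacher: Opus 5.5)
Your proposal follows essentially the same route as the paper. Part (1) is Corollary~\ref{cor: generalized for finite} with $M=\omega_R$ of generic rank one. Part (2) reduces to the complete case and applies a common $\Gamma$-construction over a Cohen subring, then combines the $F$-finite case with Theorem~\ref{thm: Lyu's gamma} and Proposition~\ref{prop: asn under flat extension}, exactly as in the paper's chain of equalities.

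One justification is wrong, though. The closed fibre of $\widehat S\to\widehat{S^\Gamma}$ is $\ell\otimes_k k^\Gamma$, where $\ell$ is the residue field of $S$. This ring is always local, since $k^\Gamma/k$ is purely inseparable, but it need not be a field. If $\lambda^{1/p}\in\ell$ for some $\lambda\in\Gamma$, it is non-reduced. For instance, for $k[[x]]\subseteq\ell[[x]]$ with $\ell=k(\lambda^{1/p})$ one gets $\widehat{S^\Gamma}=(\ell\otimes_k k^\Gamma)[[x]]$, which is not even reduced.

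So regularity of $\widehat{S^\Gamma}$ also requires shrinking $\Gamma$, to a cofinite set that stays $p$-independent in $\ell$. This is possible because only finitely many elements of $\Lambda$ become $p$-dependent in the finite extension $\ell$, and it is what the paper's ``sufficiently small cofinite $\Gamma$'' covers.

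Once $\widehat{S^\Gamma}$ is a regular local domain, several checks you anticipated come for free. Reducedness of $\widehat{R^\Gamma}\subseteq\widehat{S^\Gamma}$ and torsion-freeness are automatic, and rank $n$ follows by base change from $\widehat S\otimes_{\widehat R}\widehat{R^\Gamma}$. Likewise, $\widehat R\subseteq\widehat S$ is already a domain, contrary to your worry about completion. So no separate Hochster--Huneke reducedness argument is needed.
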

\begin{proof}
    If $R$ is $F$-finite, as in \autoref{firstpartcordsigfinitesubring} or as a special case of \autoref{secondpartcordsigfinitesubring}, 
    since $\omega_R$ has constant generic rank one, the formula follows from \autoref{cor: generalized for finite}. 

    So assume we are in case \autoref{secondpartcordsigfinitesubring} but $R$ is not $F$-finite. By Proposition~\ref{prop: asn under flat extension} we may assume that $R$ is complete. 
    Apply a common $\Gamma$-construction to $R\subseteq S$ over a complete regular Cohen subring of $R$. For sufficiently small cofinite $\Gamma$, the resulting completed rings are $F$-finite, $\widehat{S^\Gamma}$ is regular, and canonical modules and the extension rank $n$ are preserved. Thus, the $F$-finite case, Theorem~\ref{thm: Lyu's gamma}, and Proposition~\ref{prop: asn under flat extension} give
    \[
        \dsig(R)
        =\sup_\Gamma\dsig(\widehat{R^\Gamma})
        =\sup_\Gamma
          \frac{\asn{\widehat{S^\Gamma}}
                 {\omega_{\widehat{R^\Gamma}}}}{n}
        =\frac{\asn{S}{\omega_R}}{n}.
        \qedhere
    \]
\end{proof}

As an application we obtain a class of examples where dual F-signature is constructible and lower semicontinuous. 
While the dual F-signature is known 
to be lower semicontinuous under mild assumptions \cite{SmirnovTucker, Lyu}, 
it is not expected to be constructible even in hypersurfaces \cite{ASS}. 

\begin{corollary}
    Let $R \subseteq S$ be a module-finite inclusion of F-finite domains of characteristic $p > 0$. 
    If $S$ is regular, then 
    $\p \mapsto \dsig(R_\p)$ is constructible and lower semicontinuous.
\end{corollary}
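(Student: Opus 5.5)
The plan is to localize, turning $\p \mapsto \dsig(R_\p)$ into an asymptotic surjective number function on $\Spec(R)$, and then invoke Proposition~\ref{prop: asn semi}.

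\textbf{Step 1: localize the extension.} Fix $\p \in \Spec(R)$ and set $S_\p = S \otimes_R R_\p$. This is a module-finite extension of $R_\p$. It is regular and semi-local, it is a domain, and it is torsion-free over $R_\p$ of constant generic rank $n = [\operatorname{Frac}(S):\operatorname{Frac}(R)]$, independent of $\p$. The ring $R_\p$ is an F-finite reduced local domain.

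\textbf{Step 2: apply the finite-subring formula.} If $R_\p$ is Cohen--Macaulay, Corollary~\ref{cor: dsignature for finite subring}\eqref{firstpartcordsigfinitesubring} gives
\[
\dsig(R_\p) = \frac{\asn{S_\p}{\omega_{R_\p}}}{n}.
\]
The canonical modules need some care here. Since $R$ is F-finite, it is a quotient of a Gorenstein ring by Gabber, so it has a global canonical module $\omega_R$ with $(\omega_R)_\p = \omega_{R_\p}$.

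\textbf{Step 3: handle the non-Cohen--Macaulay locus.} This is the step I expect to be the main obstacle. At non-Cohen--Macaulay primes, $\dsig(R_\p) = 0$ by convention, but the asymptotic surjective number might not vanish there. I would first try to show that $R$ is in fact Cohen--Macaulay everywhere. In characteristic $p$, $R$ is a direct summand of $S$ whenever $R$ is a splinter, and F-regular rings are splinters, but we do not know a priori that $R$ is F-regular. So instead I would use the following dichotomy. If $\asn{S_\p}{\omega_{R_\p}} > 0$, then $\omega_{R_\p}$ is a quotient of a free $S_\p$-module, hence a Cohen--Macaulay $S_\p$-module, hence maximal Cohen--Macaulay over $S_\p$ (its dimension is full). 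It should then follow that $R_\p$ is Cohen--Macaulay by the usual trace/duality argument. If this fails in general, I would write
\[
f(\p) = \frac{\asn{S_\p}{\omega_\p}}{n} \cdot \mathbf{1}_{\mathrm{CM}}(\p),
\]
using that the Cohen--Macaulay locus of an excellent ring is open (hence constructible). Then I would check that $f(\p) = 0$ exactly where $f$ vanishes, so the product remains lower semicontinuous: the set $\{f > a\}$ for $a \ge 0$ is the intersection of the open set $\{\asn{} > na\}$ with the open Cohen--Macaulay locus, and for $a < 0$ it is everything.

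\textbf{Step 4: conclude.} Proposition~\ref{prop: asn semi}, applied to the $R$-modules $S$ and $\omega_R$, shows that $\p \mapsto \asn{S_\p}{(\omega_R)_\p}$ is constructible and lower semicontinuous on $\Supp(\omega_R) = \Spec(R)$. Indeed $\omega_R$ has full support because $R$ is a domain with rank-one canonical module. Dividing by the constant $n$ preserves both properties. Combined with the treatment of the Cohen--Macaulay locus in Step 3, this gives that $\p \mapsto \dsig(R_\p)$ is constructible and lower semicontinuous.
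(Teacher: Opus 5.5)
With Step~3 in its fallback form, i.e.\ $\dsig(R_\p)=\frac{1}{n}\asn{S_\p}{(\omega_R)_\p}\cdot\mathbf{1}_{\mathrm{CM}}(\p)$ for every $\p$, your argument is correct and is essentially the paper's proof. The paper likewise discards the closed non-Cohen--Macaulay locus, where $\dsig=0$, gets a canonical module from Gabber, and combines Corollary~\ref{cor: dsignature for finite subring} with Proposition~\ref{prop: asn semi}. Your observation that, for $a\ge 0$, the set $\{\dsig>a\}$ is the intersection of the open CM locus with an open set is exactly what makes that reduction work.

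You should, however, delete the two preliminary attempts in Step~3, because both are false. First, an $R_\p$-linear surjection $S_\p^{\oplus m}\twoheadrightarrow\omega_{R_\p}$ gives $\omega_{R_\p}$ no $S_\p$-module structure, and quotients of free modules need not be Cohen--Macaulay anyway. Second, and more seriously, $\asn{S_\p}{\omega_{R_\p}}>0$ does not imply that $R_\p$ is Cohen--Macaulay.

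For a counterexample, take $R=k[s^4,s^3t,st^3,t^4]\subseteq S=k[s,t]$ with $k$ perfect, localized at the homogeneous maximal ideal. Then $R$ has depth one. On the other hand, a computation in the $\ZZ^2$-grading shows that $H^2_\m(R)\to H^2_\m(S)$ is injective: $H^2_\m(S)$ is spanned by the monomials $s^at^b$ with $a,b<0$, and $H^2_\m(R)$ by those among them with $a+b\equiv 0 \bmod 4$. By local duality, the evaluation map $S\cong\omega_S\cong\Hom_R(S,\omega_R)\to\omega_R$ is therefore surjective, so $\asn{S}{\omega_R}\ge 1$ even though $\dsig(R)=0$. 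The same example shows that $R$ itself need not be Cohen--Macaulay, so the indicator of the CM locus cannot be avoided.
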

\begin{proof}
    Since $R$ is excellent, its non-Cohen-Macaulay locus 
    is closed and $\dsig(R_\p) = 0$ on it by \cite{SmirnovTucker}. Thus it suffices to assume that $R$ is 
    Cohen-Macaulay, so it has a dualizing module by \cite{Gabber}.
    Since $R$ is a domain, we may now apply Corollary~\ref{cor: dsignature for finite subring} and the claim follows from 
    Proposition~\ref{prop: asn semi}.
\end{proof}

\subsection{Transformation inequalities}

In an attempt to generalize Hashimoto's work in \cite{HashimotoDual}, \cite[Proposition~3.19]{SmirnovTucker} claimed that in a module-finite extension $(R,\m,k) \subseteq (S,\n,\ell)$ of $F$-rational rings
\begin{equation}
    \label{eq:finitetransfineq}
    \dsig (R) \geq \dsig(S) \frac{[\ell : k]}{[S : R]}.
\end{equation}
Unfortunately, the example below demonstrates that this inequality need not hold when the residue field extension $k \subseteq \ell$ is not separable. 

\begin{example}
    \label{ex:counterexample}
    Let $p = 2$, $k = \mathbb{F}_2(a,b)$, and consider
    $A = k[x,y,z]/(x^2 + ay^2 + bz^2)$.
    By \cite[Theorem 3.1]{QuinlanGallegoSimpsonSingh}, 
    its second Veronese subring $B = A^{(2)}$ is $F$-rational. Set $R = B_{(x,y,z) \cap B}$, $\ell = k(a^{1/2},b^{1/2})$, and $S = \ell[y,z]_{(y,z)}$. We have finite local inclusions $R \subseteq A_{(x,y,z)} \subseteq S$ induced by $x \mapsto a^{1/2}y + b^{1/2}z$. Now, $[\ell[y,z]:k[y,z]]=4$, hence $[S:A] = 2$ and thus $[S:R] = [\ell:k] = 4$. By \cite[Theorem 2.7]{WatanabeYoshida} and since $S$ is regular, we have $\ehk(I,R) = \ehk(IS,S) = \length_S(S/IS)$ for all finite colength ideals $I \subseteq R$. Consider the parameter ideal $Q = (y^2,z^2) \subseteq R$ and $\m = (y^2,z^2,yz,xy,xz) \subseteq R$, noting that $\length_R(\m/Q) = 3$. As $\m S = (y^2,z^2,yz) \subseteq S$, we compute
    \[
    \dsig(R) \leq \frac{\ehk(Q,R) - \ehk(\m,R)}{\length_R(\m/Q)} = \frac{4-3}{3} = \frac{1}{3} < 1 = \frac{[\ell:k]}{[S:R]} \dsig(S).
    \]
    
\end{example}

We will now provide a weaker version of (\ref{eq:finitetransfineq}).

\begin{proposition}
    \label{prop: sep transf ineq}
    Let $(R,\m,k) \subseteq (S,\n,\ell)$ be a module-finite local extension of excellent local domains of characteristic $p > 0$. If $R$ is $F$-rational, then
    \[
    \dsig(R)\geq \frac{[\ell:k]_{\mathrm{sep}}}{[S:R]}\dsig(S) \geq \frac{1}{[S:R]}\dsig(S). 
    \] 
\end{proposition}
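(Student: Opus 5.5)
We reduce to the F-finite situation and then compare surjections from Frobenius pushforwards onto canonical modules, using the trace map $\omega_S \to \omega_R$.

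\textbf{Reduction to the F-finite case.} Both rings are excellent, so completion preserves $\dsig$, the degree $[S:R]$ and the residue fields. We may therefore assume $R$ and $S$ are complete, with $\widehat S = S\otimes_R \widehat R$ a domain by excellence; this domain claim needs checking, since otherwise one works with the total ring of fractions and generic rank.

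\textbf{The $\Gamma$-construction and the separable degree.} Apply a common $\Gamma$-construction over a Cohen subring of $R$, as in the proof of Corollary~\ref{cor: dsignature for finite subring}. Theorem~\ref{thm: Lyu's gamma} lets us pass to $\widehat{R^\Gamma}\subseteq \widehat{S^\Gamma}$ at the cost of $\varepsilon$. The residue field $\ell\otimes_k k^\Gamma$ may fail to be a field, and the inseparable part of $\ell/k$ can be absorbed by adjoining roots. This is why only $[\ell:k]_{\mathrm{sep}}$ survives: $\ell_{\mathrm{sep}}\otimes_k k^\Gamma$ stays a field of degree $[\ell:k]_{\mathrm{sep}}$, while the purely inseparable part may collapse. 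Making this precise, by choosing $\Gamma$ compatibly with a $p$-basis of $\ell$ and taking a local component of the extension with the correct degree ratio, is the step I expect to be the main obstacle. The fallback is to prove the weaker bound $\frac{1}{[S:R]}\dsig(S)$ directly. Either way, we may assume both rings are F-finite and Cohen--Macaulay: $R$ is F-rational, and we may assume $\dsig(S)>0$, so $S$ is F-rational as well.

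\textbf{F-finite case.} Write $\delta(S) = \delta(R)\,[\ell:k]^{-1}\cdot[\ell^{1/p}:\ell]/[k^{1/p}:k]$. Since $\ell/k$ is finite, the $p$-degrees agree, so $\delta(S)=\delta(R)$. Consider $\omega_S=\Hom_R(S,\omega_R)$ and compose surjections $F^e_*\omega_S\twoheadrightarrow \omega_S^{\oplus b}$ with the trace/evaluation map $\omega_S\to\omega_R$. The image of $\omega_S$ in $\omega_R$ need not be all of $\omega_R$, but modulo $\m\omega_R$ we get a surjection $\omega_S\to\omega_R/\m\omega_R$ on a summand once $S$ is F-rational. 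Counting dimensions via Corollary~\ref{cor: Z formula for surjections} then gives
\[
\gdsig{F_*^e\omega_S}{\omega_R}\ \ge\ \gdsig{\omega_S}{\omega_S}\cdot \dim_k(\omega_S/\n\omega_S)\cdot [\ell:k]_{\mathrm{sep}}
\]
up to lower order terms. Alternatively, and more cleanly, use the formula
\[
\dsig(R)=\min_{J\subsetneq I}\frac{\ehk(J)-\ehk(I)}{\length(I/J)}.
\]
For $J\subsetneq I$ in $R$ with $J$ a parameter ideal,
\[
\ehk(J,R)-\ehk(I,R)=\frac{1}{[S:R]}\bigl(\ehk(JS,S)-\ehk(IS,S)\bigr)[\ell:k]
\]
by the projection formula for Hilbert--Kunz multiplicity in finite extensions. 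The right side is at least
\[
\frac{[\ell:k]}{[S:R]}\,\dsig(S)\,\length_S(IS/JS).
\]
It then suffices to show $\length_S(IS/JS)\ge \length_R(I/J)\cdot[\ell:k]_{\mathrm{sep}}/[\ell:k]$.

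This length inequality is exactly where Example~\ref{ex:counterexample} fails for the full degree, so the core of the proof is the following bound. Filter $I/J$ by a composition series, so that each step is $R/\m$. After tensoring with $S$, each step contributes $\length_S(S/\n' )$ for $\n'$ generated by $\m S$ plus lower terms, and tightness of the inclusion is preserved because $R$ is F-rational, so $JS\cap R$ is contained in $J^*=J$. It remains to show each step contributes at least $[\ell:k]_{\mathrm{sep}}/[\ell:k]$ in normalized length, using $\length_S(\ell\otimes_k k)$ and the fact that for a purely inseparable extension the length of $S/\m S$ can drop only by the inseparable degree. I expect proving this contribution bound to be the hardest step.
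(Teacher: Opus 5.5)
Your Hilbert--Kunz route (the paragraph beginning ``Alternatively, and more cleanly'') has the same skeleton as the paper's argument, and your reduction is correct. By Watanabe--Yoshida and the definition of $\dsig(S)$, the proposition follows from the inequality
\[
\length_S(IS/JS)\ \ge\ \frac{[\ell:k]_{\mathrm{sep}}}{[\ell:k]}\,\length_R(I/J)
\]
for $J \subsetneq I$ with $J$ a parameter ideal of $R$. However, this inequality is the whole content of the proposition, and you do not prove it.

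The embedding $I/J\hookrightarrow IS/JS$ coming from $JS\cap R=J$ is only an embedding of $R$-modules. It yields $\length_R(I/J)\le \length_R(IS/JS)=[\ell:k]\,\length_S(IS/JS)$, which gives only the weak bound $\dsig(S)/[S:R]$. Your composition-series sketch does not improve on this. $S$ is not flat over $R$, so tensoring a composition series of $I/J$ with $S$ gives no filtration of $IS/JS$ with controlled factors. Nothing in the sketch distinguishes the separable part of $\ell/k$ from the inseparable part, and Example~\ref{ex:counterexample} shows that this distinction is essential.

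The missing idea is to realize the separable part of the residue field extension inside $S$.
\begin{itemize}
\item First complete both rings, so $S$ is Henselian. This is harmless: assuming $\dsig(S)>0$, both rings are excellent and $F$-rational, so their completions are $F$-rational domains.
\item Let $k'$ be the separable closure of $k$ in $\ell$ and write $k'=k(\overline{\theta})$. Hensel's lemma, applied to a monic lift of the minimal polynomial of $\overline{\theta}$, gives $\theta\in S$.
\item Then $R'=R[\theta]\subseteq S$ is finite \'etale and local over $R$, with residue field $k'$, and free of rank $[\ell:k]_{\mathrm{sep}}$. It is again $F$-rational.
\item $JR'$ is a parameter ideal of $R'$, so $JS\cap R'\subseteq (JR')^*=JR'$. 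Hence $IR'/JR'\hookrightarrow IS/JS$ as $R'$-modules.
\item Since $R\to R'$ is flat with closed fibre $R'/\m R'=k'$, we have $\length_{R'}(IR'/JR')=\length_R(I/J)$.
\end{itemize}
Combining these,
\[
\length_R(I/J)\le \length_{R'}(IS/JS)=[\ell:k']\,\length_S(IS/JS).
\]
This is exactly your inequality, because $[\ell:k']=[\ell:k]/[\ell:k]_{\mathrm{sep}}$. The paper packages the same argument as $\dsig(R)\ge\dsig(R')$, followed by the weak bound applied to $R'\subseteq S$.

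The $\Gamma$-construction step and the trace-map paragraph can be discarded entirely. The Hilbert--Kunz formula defines $\dsig$ without any $F$-finiteness assumption. Also, the separable degree does not arise from how $\ell\otimes_k k^\Gamma$ behaves; it arises from this \'etale lift.
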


\begin{proof}
    Without loss of generality assume $\dsig(S) > 0$, so that $S$ is also $F$-rational. By \cite[Proposition 2.1, Corollary 3.7]{SmirnovTucker}, we may further assume that $R$ and $S$ are complete.

    Let $k' \subseteq \ell$ be the separable closure of $k$ in $\ell$, so that $[k':k] = [\ell:k]_{\mathrm{sep}}$. Write $k' = k(\overline{\theta})$ with minimal polynomial $\overline{g}(T) \in k[T]$. Applying Hensel's lemma to a monic lift $g(T) \in R[T] \subseteq S[T]$ of $\overline{g}(T)$ gives a root $\theta \in S$ lifting $\overline{\theta} \in \ell$. Then $R' = R[\theta] \subseteq S$ is a finite \'etale local extension of $R$ with residue field $k'$ and $[R':R] = [\ell:k]_{\mathrm{sep}}$. In particular, $R'$ is again $F$-rational, and
    \[
    \frac{1}{[S:R']} = \frac{[R':R]}{[S:R]} = \frac{[\ell:k]_{\mathrm{sep}}}{[S:R]}.
    \]

    Suppose $\ul{x}$ is a system of parameters of $R'$, hence also of $S$. Since $(\ul{x}) S \cap R' \subseteq (\ul{x})^* = (\ul{x})$ by \cite[(5.23)]{HochsterHunekeFinite}, for any $(\ul{x}) \subsetneq I$ in $R'$ we have $I / (\ul{x}) \subseteq IS / (\ul{x})S$. Thus, by \cite[Proposition~3.12]{SmirnovTucker} and \cite[Theorem~2.7]{WatanabeYoshida}, we have
    \[
    \begin{array}{rcl}
         \dsig(R) \geq \dsig(R') &=& \displaystyle \inf_{(\ul{x}) \subsetneq I} \frac{\ehk((\ul{x})) - \ehk(I)}{\length_{R'}(I / (\ul{x}))} \\ & \geq & \displaystyle
         \inf_{(\ul{x}) \subsetneq I} \frac{[\ell:k']}{[S:R']} \cdot \frac{\ehk((\ul{x})S) - \ehk(IS)}{\length_{R'}(IS / (\ul{x})S)} \\
         & = & \displaystyle
         \inf_{(\ul{x}) \subsetneq I} \frac{1}{[S:R']} \cdot \frac{\ehk((\ul{x})S) - \ehk(IS)}{\length_{S}(IS / (\ul{x})S)} \\
         & \geq & \displaystyle \frac{1}{[S:R']} \dsig(S) = \frac{[\ell:k]_{\mathrm{sep}}}{[S:R]} \dsig(S)
    \end{array}
    \]
    where the final inequality follows from the infimum in the definition of $\dsig(S)$ as there can be ideals in $S$ which are not extended from an ideal in $R'$. The last assertion follows as well from $[\ell:k]_{\mathrm{sep}} \geq 1$.
\end{proof}

\begin{remark}
We can give an alternate proof of \autoref{prop: sep transf ineq}
using the asymptotic surjective number.
Assume that $R$ is F-finite and $\dsig(S)>0$.
The case $\delta(R)=1$ is immediate, so assume $\delta(R)>1$. Since $\omega_S$ has $R$-rank $[S:R]$ and $\omega_R$
has rank one, \autoref{cor: rank formula} gives
the equality below
\[
\dsig(R)
= \frac{\basegdsig{\omega_S}{\omega_R}{R}}{[S:R]}
\geq \frac{\asn{\omega_S}{\omega_R}}{[S:R]}\dsig(S),
\]
while composing surjections and applying
\autoref{rem: signature via asn} gives the inequality.
Here we also use $\delta(R)=\delta(S)$, and the asymptotic surjective number is taken over $R$. The resulting lower bound on $\dsig(R)$ extends to the
non-$F$-finite case, after completion or assuming the existence of canonical modules, by a common $\Gamma$-construction
and \cite[Theorem~7.1]{Lyu}.

To conclude, it suffices to observe that $\asn{\omega_S}{\omega_R}\geq[\ell:k]_{\mathrm{sep}}$. We may
pass to completions and take $R'$ as in the proof of \autoref{prop: sep transf ineq} above. 
Since $R'$ is $F$-rational, $QS\cap R'=Q$ for every
parameter ideal $Q$ of $R'$. Thus the natural map on top
local cohomology from $R'$ to $S$ is injective, and local
duality gives a surjection
$\omega_S\twoheadrightarrow\omega_{R'}$. As $R'$ is finite \'etale over $R$ of rank
$[\ell:k]_{\mathrm{sep}}$, we also have
$\omega_{R'} \cong R'\otimes_R\omega_R
\cong \omega_R^{\oplus[\ell:k]_{\mathrm{sep}}}$
as $R$-modules.
Thus $\omega_S$ surjects onto $[\ell:k]_{\mathrm{sep}}$
copies of $\omega_R$ as an $R$-module, giving
$\asn{\omega_S}{\omega_R}\geq[\ell:k]_{\mathrm{sep}}$. 
\end{remark}

\begin{corollary}\label{cor: Hashimoto's inequality}
    Let $S$ be a regular semi-local domain of characteristic $p > 0$. Suppose that $(R, \m)$ is a Cohen--Macaulay subring of $S$ such that $S$ is a finite $R$-module.
    Then the following are equivalent:
    \begin{enumerate}
        \item $R$ is F-rational,
        \item $\dsig(R) \geq 1/[S : R]$.
    \end{enumerate}    
\end{corollary}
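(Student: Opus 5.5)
The implication (2) $\Rightarrow$ (1) is the easier direction. If $\dsig(R) \geq 1/[S:R] > 0$, then $R$ is F-rational by the characterization of positivity of the dual F-signature (Sannai in the F-finite case; in general via the F-rational signature of \cite{HochsterYao, SmirnovTucker}). With our conventions $\dsig(R)=0$ for non-Cohen--Macaulay rings, and here $R$ is Cohen--Macaulay by assumption, so positivity applies directly.

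For (1) $\Rightarrow$ (2) I would like to invoke Proposition~\ref{prop: sep transf ineq} with $\dsig(S) = 1$, since $S$ is regular. That proposition, however, requires a local extension of excellent local domains, whereas $S$ here is only semi-local. So I would instead argue directly, following the proof of Proposition~\ref{prop: sep transf ineq} or Corollary~\ref{cor: dsignature for finite subring}.

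\emph{Route 1.} First complete $R$; this preserves $\dsig(R)$ by \cite{SmirnovTucker}. Then $\widehat S = S\otimes_R \widehat R$ is a finite product of complete regular local rings $S_i$, and $\widehat R$ is F-rational, hence a normal domain. Each $S_i$ is a quotient of $\widehat S$ by a minimal prime, and the composite $\widehat R \to S_i$ is injective because $\widehat R$ is a domain and $\widehat S$ is torsion-free over it. The $\widehat R$-ranks satisfy $\sum_i [S_i:\widehat R] = [S:R]$, so some factor has $[S_i:\widehat R] \le [S:R]$. Applying Proposition~\ref{prop: sep transf ineq} to the local extension $\widehat R \subseteq S_i$ of excellent local domains gives
\[
\dsig(R)=\dsig(\widehat R) \geq \frac{1}{[S_i:\widehat R]} \geq \frac{1}{[S:R]}.
\]

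\emph{Route 2.} In the F-finite case, Corollary~\ref{cor: dsignature for finite subring} gives $\dsig(R)=\asn{S}{\omega_R}/n$, so it suffices to show $\asn{S}{\omega_R}\ge 1$, i.e.\ a surjection $S\twoheadrightarrow\omega_R$. F-rationality gives $QS\cap R = Q$ for parameter ideals $Q$, hence injectivity of $H^d_\m(R)\to H^d_\m(S)$. Matlis duality then yields $\omega_S\twoheadrightarrow\omega_R$, and since $S$ is regular (Gorenstein) with $\omega_S\cong S$ on each component, we get $S\twoheadrightarrow\omega_R$.

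The main obstacle is the bookkeeping for the semi-local $S$ and for non-F-finite $R$. In Route 1 this means verifying that completion splits $\widehat S$ as claimed and that the rank inequality on the factors holds; in Route 2 it means handling $\omega_S$ as an $R$-module across maximal ideals and passing to the non-F-finite case via the $\Gamma$-construction as in Corollary~\ref{cor: dsignature for finite subring}. I would try Route 1 first, since it reduces everything to the already proved Proposition~\ref{prop: sep transf ineq}.
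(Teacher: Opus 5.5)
Your argument for (2)$\Rightarrow$(1) matches the paper's. For (1)$\Rightarrow$(2), Route 1 has a genuine gap at ``$\widehat R$ is F-rational, hence a normal domain.'' The corollary assumes neither excellence nor F-finiteness, and you give no reason why F-rationality of $R$ should pass to $\widehat R$. That ascent is an excellence-type statement, which is presumably why Proposition~\ref{prop: sep transf ineq} is stated for excellent domains.

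Without it you do not know that $\widehat R$ is a domain, which you use both for $\widehat R\hookrightarrow S_i$ and for $\sum_i [S_i:\widehat R]=[S:R]$. Nor do you know that $\widehat R$ is F-rational, which Proposition~\ref{prop: sep transf ineq} requires; its proof needs $(\ul x)S_i\cap \widehat R=(\ul x)\widehat R$ in the single factor $S_i$. After completion, flatness of $R\to\widehat R$ only gives you $(\ul x)\widehat S\cap\widehat R=(\ul x)\widehat R$ for the whole product $\widehat S=\prod_i S_i$, and this does not control an individual factor. (If $R$ is excellent, Route 1 is fine and even yields the sharper bound $1/\min_i[S_i:\widehat R]$.)

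Route 2 is sound for F-finite $R$; it is the argument in the remark following Proposition~\ref{prop: sep transf ineq}. But, as you acknowledge, the non-F-finite case is not done, and part (2) of Corollary~\ref{cor: dsignature for finite subring}, which you would invoke, requires $S$ to be local.

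The missing idea is that you never need to split $S$ or complete: do the Hilbert--Kunz computation on the semi-local ring $S$ as a whole.
\begin{itemize}
\item Since $S$ is a domain module-finite over $R$, there is $\phi\in\Hom_R(S,R)$ with $\phi(1)\neq 0$. Hence $(\ul x)S\cap R\subseteq (\ul x)^*=(\ul x)$ with no excellence needed, and so $I/(\ul x)\hookrightarrow IS/(\ul x)S$ for every $(\ul x)\subsetneq I$.
\item As $R$ is normal, going-down gives $\dim S_{\n_i}=\dim R$ for each maximal ideal $\n_i$ of $S$.
\item Flatness of Frobenius on the regular rings $S_{\n_i}$ then gives $\length_R(S/I^{[p^e]}S)=p^{e\dim R}\length_R(S/IS)$, so $[S:R]\ehk(I)=\ehk(I;S)=\length_R(S/IS)$.
\end{itemize}
Substituting into the formula for $\dsig(R)$, each ratio equals $\length_R(IS/(\ul x)S)/\bigl([S:R]\,\length_R(I/(\ul x))\bigr)\geq 1/[S:R]$. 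This is the paper's proof, and it works in the full generality of the statement.
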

\begin{proof}
By \cite{SmirnovTucker}, $R$ is F-rational if and only if $\dsig(R) > 0$. Hence one direction is immediate. 

For the opposite direction, we assume that $R$ is F-rational and apply the argument of 
Proposition~\ref{prop: sep transf ineq}
to obtain that $I/(\ul{x}) \subseteq IS / (\ul{x})S$. 
Let $\n_1, \ldots, \n_t$ be maximal ideals of $S$.
Since every $S_{\n_i}$ is regular,  
for an $\m$-primary ideal $I$ of $R$ we have 
\[
\length_R (S/I^{[p^e]}S)
= \sum_{i = 1}^t p^{e \dim (R)}[\kappa(\n_i):k] \length_{S_{\n_i}} (S_{\n_i}/IS_{\n_i}) 
= p^{e \dim (R)} \length (S/IS).
\]
Thus $[S: R]\ehk (I) = \ehk (I; S) = \length_R (S/IS)$
and we may compute
\[
 \dsig(R) = \displaystyle \inf_{(\ul{x}) \subsetneq I} \frac{\ehk((\ul{x})) - \ehk(I)}{\length_{R}(I / (\ul{x}))} = 
 \displaystyle \inf_{(\ul{x}) \subsetneq I} \frac{\length_{R}(IS/ (\ul{x})S)}{[S:R]\length_{R}(I / (\ul{x}))} \geq \frac{1}{[S:R]}. \qedhere
\]
\end{proof}

\begin{remark}
The inequality in Corollary~\ref{cor: Hashimoto's inequality} is often strict. For example, 
the dual F-signature of the $n$th Veronese subrings of $k[[x,y]]$
is $1/2$ for all $n \geq 2$ by \cite{Sannai} (see also \cite{Matsushita, Veronese}), while $[S : R] = n$. 
\end{remark}

\begin{remark}
Suppose that $R = S^G$ is an invariant subring, where $S = k[[ x_1,\ldots,x_d ]]$ and $G$ is a finite group acting faithfully and linearly without pseudo-reflections. In this case, 
Corollary~\ref{cor: Hashimoto's inequality} 
recovers the Cohen--Macaulay case of \cite[Theorem~5.4]{HashimotoDual}, where it was shown in the modular case (i.e., $p$ divides $|G|$) that  if $\dsig (R) > 0$ then $\dsig (R) \geq 1/|G|$. In the non-modular case, the quotient singularity is strongly F-regular, so the conclusion follows  from the transformation rule for F-signature \cite{CRST, CarvajalRojas}:
\[
\dsig(R) \geq \fsig (R) = 1/[S : R] = 1/|G|.
\]
\end{remark}

\subsection{Finite Frobenius representation type}

Let $(R, \m, k)$ be a complete F-finite local ring. 
We recall that an $R$-module $M$ satisfies the FFRT property 
if the set of indecomposable $R$-modules $X$
such that $X$ is a direct summand of $F_*^e M$ for some $e$ is finite.
We will call this set the \emph{FFRT system} of $M$. 
Yao (\cite[Theorem~C]{Yao}) showed 
that, for any $X$ in the FFRT system, the \emph{F-splitting signature}
\[
\lim_{e \to \infty} \frac{\max \{n \mid X^{\oplus n} \text{ is a direct summand of } F_*^e M\}}{[k:k^{p^e}]p^{e \dim (M)}} 
\]
exists and is a rational number. 

\begin{proposition}\label{prop: FFRT formula}
    Suppose that $(R, \m, k)$ is a complete F-finite local ring
    such that $\delta(R) > 1$ and $M$ 
    is a finitely generated faithful $R$-module that has finite Frobenius representation type.
    Let $M_1, \ldots, M_n$ be the FFRT system of $M$ 
    with F-splitting signatures $s_1, \ldots, s_n$ respectively. 
    If $D$ is the common denominator of $s_i$, then
    for any nonzero finitely generated $R$-module $N$
    \[
    \gdsig{M}{N} = D^{-1} \asn {\bigoplus_{i = 1}^n M_i^{\oplus Ds_i}}{N}. 
    \]
    In particular, if $R$ is a Cohen-Macaulay reduced ring and $M$ has generic rank then $\dsig(R)$ is a rational number. 
\end{proposition}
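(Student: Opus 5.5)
Since $M$ has FFRT, for each $e$ we write $F_*^e M \cong \bigoplus_{i=1}^n M_i^{\oplus a_{i,e}}$. Krull--Schmidt holds because $R$ is complete. By Yao's theorem, $a_{i,e}/\delta(R)^e \to s_i$; here we use that $M$ is faithful, so $\dim M = \dim R$ and the normalizations agree. Set $P = \bigoplus_i M_i^{\oplus Ds_i}$, which is a module because each $Ds_i$ is a nonnegative integer. The proof will compare $F_*^e M$ with $P^{\oplus m}$ for suitable $m$, and then combine \autoref{rem: signature via asn} with the characterization of $\asn{\blank}{N}$ in \autoref{cor: asn convergence}.

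\textbf{Two-sided comparison.} Fix $\varepsilon>0$.
\begin{itemize}
\item \emph{Upper bound.} For $e\gg0$ we have $a_{i,e} \le (s_i+\varepsilon)\delta(R)^e$. Hence $F_*^eM$ is a direct summand of $P^{\oplus m_e}\oplus \bigoplus_i M_i^{\oplus \lceil \varepsilon \delta(R)^e\rceil}$, where $m_e = \lceil \delta(R)^e/D\rceil$. Indeed, $m_e Ds_i \ge s_i\delta(R)^e$, and we absorb the excess. If $s_i=0$, the term $M_i^{\oplus a_{i,e}}$ is handled by the $\varepsilon$-part, with $a_{i,e}\le\varepsilon\delta(R)^e$. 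Any surjection from a direct summand extends to one from the whole module, and the number of copies of $N$ that a direct sum surjects onto is subadditive against the sums of maxima. It follows that
\[\max\{n : F_*^eM\twoheadrightarrow N^{\oplus n}\}\le \max\{n: P^{\oplus m_e}\twoheadrightarrow N^{\oplus n}\} + \sum_i \max\{n: M_i^{\oplus\lceil\varepsilon\delta(R)^e\rceil}\twoheadrightarrow N^{\oplus n}\}.\]
The first term is $\le m_e\asn{P}{N}$, because the asn is a supremum. Each summand in the second term is $\le \lceil\varepsilon\delta(R)^e\rceil\,\mu(M_i)$. Dividing by $\delta(R)^e$ and letting $e\to\infty$ gives $\gdsig{M}{N}\le D^{-1}\asn{P}{N} + \varepsilon C$.
\item \emph{Lower bound.} For $e\gg0$ we have $a_{i,e}\ge (s_i-\varepsilon)\delta(R)^e$. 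So whenever $s_i>0$, $F_*^eM$ contains $P^{\oplus m'_e}$ as a direct summand, where $m'_e=\lfloor (1-\varepsilon')\delta(R)^e/D\rfloor$. Projecting onto this summand gives
\[\max\{n: F_*^eM\twoheadrightarrow N^{\oplus n}\}\ge \max\{n: P^{\oplus m'_e}\twoheadrightarrow N^{\oplus n}\}.\]
By \autoref{cor: asn convergence}, the right side is $\ge m'_e\asn{P}{N} - (\dim_k N/\m N)^4$. Normalizing and letting $\varepsilon\to0$ yields the reverse inequality.
\end{itemize}
Together with existence of the limit (\autoref{thm: surjections through F-signature}, using $\delta(R)>1$), this proves $\gdsig{M}{N}=D^{-1}\asn{P}{N}$. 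The case $N$ not killed by $\sqrt{\Ann M}$ needs no separate treatment: the formula already covers it, because both sides vanish.

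\textbf{Rationality of $\dsig(R)$.} Take $M$ to be a module of generic rank $r>0$ with FFRT; as the statement intends, this should be $M=R$ (or any such $M$). Then \autoref{cor: rank formula} gives $\gdsig{\omega_R}{\omega_R} = \gdsig{R}{\omega_R} = r^{-1}\gdsig{M}{\omega_R}$, since $\omega_R$ has generic rank one over a reduced CM ring. The right side equals $r^{-1}D^{-1}\asn{P}{\omega_R}$, which is rational by \autoref{cor: asn convergence}; we use that $\omega_R\neq0$. The residual case $\delta(R)=1$ is excluded by hypothesis.

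\textbf{Main obstacle.} The delicate step is the bookkeeping in the upper bound. Indecomposables with $s_i=0$ may still appear with sublinear multiplicity, and so may the slack in the multiplicities $a_{i,e}$. We must control these uniformly by the trivial bound (number of generators) so that they contribute only $O(\varepsilon)$ after normalization. We also need to make sure Yao's normalization by $p^{e\dim M}$ matches $\delta(R)^e$, which is where faithfulness of $M$ enters.
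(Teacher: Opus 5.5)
\paragraph{A false step and its repair.} One step is wrong as stated, though it is easily repaired. In the upper bound you use that the number of copies of $N$ onto which a direct sum surjects is subadditive. Write $b(X)=\max\{n\mid X\twoheadrightarrow N^{\oplus n}\}$. In general only superadditivity $b(A\oplus B)\ge b(A)+b(B)$ holds. For example, over $R=k$ with $N=k^{\oplus 2}$ one has $b(k)=0$ but $b(k\oplus k)=1$; see also the example in the remark after Corollary~\ref{cor: superadditive}. So your displayed inequality $b(F_*^eM)\le b(P^{\oplus m_e})+\sum_i b(M_i^{\oplus\lceil\varepsilon\delta(R)^e\rceil})$ is unjustified.

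What does hold, and is all you actually use afterwards, is $b(A\oplus B)\le b(A)+\mu(B)$. Take a surjection $(\phi_A,\phi_B)\colon A\oplus B\twoheadrightarrow N^{\oplus n}$, put $Y=N/\m N$, and let $W\subseteq Y^{\oplus n}$ be the image of $\phi_A\otimes k$. Since $\phi_B\otimes k$ maps onto $Y^{\oplus n}/W$, this quotient has dimension $c\le\mu(B)$. The images of the $n$ coordinate copies of $Y$ span $Y^{\oplus n}/W$, so at most $c$ of them already span it. Projecting those coordinates away maps $W$ onto $Y^{\oplus n'}$ with $n'\ge n-c$. By Nakayama the composite $A\to N^{\oplus n}\to N^{\oplus n'}$ is then surjective. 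Now take $A=P^{\oplus m_e}$ and $B=\bigoplus_i M_i^{\oplus\lceil\varepsilon\delta(R)^e\rceil}$, and use that $F_*^eM$ is a summand of $A\oplus B$. This gives $b(F_*^eM)\le m_e\asn{P}{N}+\lceil\varepsilon\delta(R)^e\rceil\sum_i\mu(M_i)$, which is exactly the bound you then divide by $\delta(R)^e$.

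\paragraph{Comparison with the paper's route.} With that lemma in place, your argument is correct, and it takes a different route from the paper's. The paper never compares surjection counts across direct sums. Instead it passes to the quantity $\dim_k(X/\Z^X(V))$, which \emph{is} additive in $X$ by Proposition~\ref{prop: basic property Z}. Thus $\Z_e^M(V)\cong\bigoplus_i\Z^{M_i}(V)^{\oplus a_{i,e}}$ gives $\Zsig{M}(V)=D^{-1}\dim_k(P/\Z^P(V))$ exactly for each $V$. The infimum formula of Theorem~\ref{thm: surjections through F-signature} then matches, term by term, the minimum formula for $\asn{P}{N}$ in Corollary~\ref{cor: asn convergence}. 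That is precisely how the paper sidesteps the non-additivity you ran into.

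Your sandwich argument bypasses the $\Z$-machinery and both inf/min formulas. It needs only Yao's theorem, Krull--Schmidt, the fact that $\asn{P}{N}$ is both a limit and a supremum, and the lemma above. You do not even need the $(\dim_k N/\m N)^4$ rate in the lower bound, since $m'_e\to\infty$. As a bonus, the two-sided bounds prove existence of the limit directly in this setting, without citing Theorem~\ref{thm: surjections through F-signature}. The paper's proof is shorter given its machinery and exhibits the identity at the level of each submodule $V$.
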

\begin{proof}
    Write 
    $F_*^e M = \bigoplus_{i = 1}^n M_i^{\oplus a_{i,e}}$ where $a_{i, e}$ are maximal possible and let $L_i = M_i^{\oplus Ds_i}$. 
    It follows by definition that 
    \(
    \Z_e^M (V) 
    = \Z^{F_*^e M} (V) \cong 
    \bigoplus_{i = 1}^n \Z^{M_i}(V)^{\oplus a_{i,e}}.
    \)
    Thus
    \begin{align*}
    \Zsig{M} (V) &= \sum_{i = 1}^n \dim_k (M_i/\Z^{M_i}(V))
    \lim_{e \to \infty} \frac{a_{i, e}}{[k:k^{p^e}]p^{e \dim (R)}}
    = \sum_{i = 1}^n s_i \dim_k (M_i/\Z^{M_i}(V))
    \\ &= D^{-1} \sum_{i = 1}^n \dim_k (L_i/\Z^{L_i}(V))
    = D^{-1} \dim_k (\oplus L_i/\Z^{\oplus L_i}(V)). 
    \end{align*}
    By Corollary~\ref{cor: asn convergence} and Theorem~\ref{thm: surjections through F-signature} we obtain the first conclusion:
    \[
    \gdsig{M}{N} = 
    \inf \left \{ \frac{\Zsig{M} (V)}{\dim_k (N/V)} \mid\m N \subseteq V \subsetneq N \right \}
    = D^{-1}\asn{\oplus L_i}{N}.
    \]    
    The second conclusion follows from the first and Corollary~\ref{cor: rank formula}.
\end{proof}

\begin{remark}
    Related results were obtained using the \emph{Frobenius limits} of \cite{HashimotoSymonds}, which generalize the FFRT property. In the notation of the proof of Proposition~\ref{prop: FFRT formula}
    we have $\FL ([M]) = \sum^n_{i = 1} s_i [M_i] = \frac 1 D [\oplus L_i]
    $
    in the additive Grothendieck group.
    Then a slight generalization of \cite[Theorem~4.4]{HashimotoDual}
    will show that $\gdsig{M}{\omega_R} = \frac 1 D \asn {\oplus L_i}{\omega_R}$.
\end{remark}

\subsection{Reduction mod $p$}
The next consequence of the rank inequality 
is a result on the behavior of reductions mod $p$. We will start by describing our setup. 

Let $(R, \m, k)$ be a local ring of essentially finite type over $\CC$. Choose a transcendence basis
$\overline{t}_1,\ldots,\overline{t}_r$ of $k$ over $\CC$ together with lifts $t_1,\ldots,t_r\in R$. This identifies $F = \CC(t_1,\ldots,t_r) \subseteq R$ with $F \cong \CC(\overline{t}_1,\ldots,\overline{t}_r)\subseteq k$, so that $k$ is finite separable over $F$. Set $R' = (R \otimes_F k)_{\m'}$ for $\m' = \ker \{R \otimes_F k \xrightarrow{r \otimes \alpha \mapsto (r + \m)\alpha} k \}$.
The map $R \to R'$ is faithfully flat local with $\m R' = \m'R'$ and unchanged residue field $k$. Moreover, 
$R'$ is essentially of finite type over $k$, so we may write $R' = k[x_1,\ldots,x_n]_{(x_1,\ldots,x_n)} / I$. By adjoining the coefficients of a finite set of generators of $I$, we obtain a finitely generated $\ZZ$-subalgebra $C \subseteq k$ 
and a finite type $C$-algebra $A_R = C[x_1,\ldots,x_n] / J$ such that $R' = (A_R \otimes_C k)_{(x_1,\ldots,x_n)}$. We call $(C,A_R)$ a \emph{model} of $R'$. For a point $\q \in \Spec(C)$, let $R(\q) = (A_R \otimes_C \kappa(\q))_{(x_1,\ldots,x_n)}$. When $\q$ is closed, these rings have positive characteristic and will be called the \emph{geometric reductions modulo $p$} associated to the chosen model.

For a module-finite local inclusion $(R,\m) \subseteq (S,\n)$, put $S' = S \otimes_R R'$. Then $S'$ is finite and local over $R'$, since its closed fiber over $R'$ is $S / \m S$. Enlarging $C$ and replacing $A_R$ by its localization at some $s \in A_R$ whose image in $A_R / (x_1,\ldots,x_n) \cong C$ is a unit if necessary, we obtain a finite inclusion $A_R\subseteq A_S$ with
$S'\cong A_S\otimes_{A_R}R'$.
We call $(C,A_R\subseteq A_S)$ a \emph{simultaneous model}
of $R'\subseteq S'$. Set $S(\q) = A_S \otimes_{A_R} R(\q)$.
Thus, $S(\q)$ is a finite semilocal $R(\q)$-algebra.

If $R$ is Cohen-Macaulay (resp., regular), then so is $R'$, of the same dimension $d = \dim R$. Enlarging $C$ and localizing $A_R$ as above, we may assume that $A_R \otimes_C k$ is Cohen-Macaulay (resp., regular) of pure dimension $d$. By \cite[(2.3.14)(a), (2.3.5)(c), (2.3.6)(a)]
{HochsterHunekeChar0}, after further localizing $C$,
the fibers $A_R\otimes_C\kappa(\q)$ are
Cohen-Macaulay (resp., regular) of pure dimension $d$
for every $\q\in\Spec(C)$. We may also assume that $C$ is regular and that $A_R$ is flat over $C$ by generic freeness. In particular, the local rings $R(\q)$ are Cohen-Macaulay (resp., regular) of dimension $d$.
Likewise, if $S$ is regular, we may arrange that
$S(\q)$ is regular for every closed point $\q$
in a dense open subset of $\Spec(C)$.

\begin{corollary}\label{cor: reduction mod p with asn}
Let $(R, \m) \subseteq (S, \mf n)$ be a module-finite  
local $\CC$-algebra extension between rings of essentially finite type over $\CC$.
Suppose that $S$ is regular and $R$ is Cohen--Macaulay. Choose simultaneous models $A_R \subseteq A_S$ over
a finitely generated $\ZZ$-algebra $C$, as above,
and write $R(\q)$ and $S(\q)$ for the corresponding
reductions. Then there is a dense open subset $U \subseteq \Spec(C)$ such that 
\[
\dsig(R(\q)) = \frac{\asn{S}{\omega_R}}{[S:R]} 
\]
for every closed point $\q \in U$. Additionally, $R$ is normal if and only if $\asn{S}{\omega_R} > 0$, in which case
\[
\dsig(R(\q)) \geq \frac{1}{[S:R]} > 0.
\]

\end{corollary}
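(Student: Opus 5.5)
The plan is to reduce to \autoref{cor: dsignature for finite subring} on each reduction $R(\q)\subseteq S(\q)$. We then transport the asymptotic surjective number from characteristic $0$ to the closed fibers using the constructibility of \autoref{cor: semicontinuity in families} together with the achievability of \autoref{thm: asn is achieved}.

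First, replace $R\subseteq S$ by $R'\subseteq S'$. By \autoref{prop: asn under flat extension} we have $\asn{S}{\omega_R}=\asn{S'}{\omega_{R'}}$, using $\omega_{R'}=\omega_R\otimes_R R'$, and also $[S':R']=[S:R]$. Note that $S'$ is regular, since $S\to S'$ is a localization of $S\otimes_F k$ with $k/F$ finite separable. In the simultaneous model we choose a finitely generated $A_R$-module $\Omega$ with $\Omega\otimes_{A_R}R'\cong\omega_{R'}$. After shrinking $\Spec(C)$ we arrange the following:
\begin{itemize}
\item $A_R\subseteq A_S$ is finite and $A_S$ is torsion-free of generic rank $n=[S:R]$, with this rank unchanged on fibers;
\item $R(\q)$ is a Cohen--Macaulay domain, or at least reduced and equidimensional, and $S(\q)$ is regular semilocal;
\item $\Omega\otimes \kappa(\q)$ localizes to $\omega_{R(\q)}$, as in \cite{HochsterHunekeChar0}.
\end{itemize}
Closed points have finite residue fields, so $R(\q)$ is F-finite. \autoref{cor: dsignature for finite subring}\eqref{firstpartcordsigfinitesubring} then gives $\dsig(R(\q))=\asn{S(\q)}{\omega_{R(\q)}}/n$.

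Next we compare $\asn{S(\q)}{\omega_{R(\q)}}$ with the generic value. We apply \autoref{cor: semicontinuity in families} with $B=C$, the ring $A_R$ (localized as in the setup, so that $A_R/(x_1,\dots,x_n)\cong C$), and the modules $A_S$ and $\Omega$. This function is constructible, hence constant on a dense open $U$ containing the generic point. The fiber at the generic point is $(A_R\otimes_C \mathrm{Frac}(C))_{(x)}$, and $R'$ is obtained from it by the faithfully flat base change $\mathrm{Frac}(C)\to k$. By \autoref{prop: asn under flat extension} again, the generic value equals $\asn{S'}{\omega_{R'}}$. To be safe, the proof of \autoref{cor: semicontinuity in families} can be repeated directly. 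An optimal surjection $S'^{\oplus m}\twoheadrightarrow\omega_{R'}^{\oplus a}$, which exists by \autoref{thm: asn is achieved}, spreads out and specializes by Nakayama, giving $\ge$. Conversely, optimal surjections at the points $\q$ lift to the generic fiber after shrinking. The main obstacle is this converse direction: we need the optimum at closed points to be bounded uniformly. For this I would use \autoref{cor: asn convergence}, which expresses $\asn{}{}$ via $\min_K \dim(M/\Z^M(K))/\dim(N/K)$ over the finite-dimensional fiber $\Hom$-space. On $U$ the base change of $\Hom$ holds by \cite[(9.4.3)]{EGA43}, so this is a rank condition on a linear matrix. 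It is then constant on an open set by the noncommutative-rank criterion in the proof of \autoref{thm: matrix blowups give injections}.

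Finally, the normality claim. If $\asn{S}{\omega_R}>0$, then $\omega_R$ is a quotient of $S^{\oplus m}$, so $\omega_R$, and hence $R$, satisfies $S_2$. In addition, $R$ is a direct summand-like subring: $R_\p\to S_\p$ is finite with $S$ normal, and a surjection $S\twoheadrightarrow \omega_R$ forces $R$ to be $R_1$ in codimension one. Here I would argue locally at height-one primes that $\omega_{R_\p}$ being a quotient of the normal ring $S_\p$ gives an $S_\p$-module structure, making $R_\p$ integrally closed. Conversely, if $R$ is normal, then it is a direct summand of $S$ via the trace divided by $[S:R]$ in characteristic $0$. So $R$ is a quotient of $S$, and $\omega_R$ is a quotient of $\omega_S\cong S$ by dualizing the split inclusion, which gives $\asn{S}{\omega_R}\ge1$. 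The bound $\dsig(R(\q))\ge 1/[S:R]$ then follows from the displayed formula.
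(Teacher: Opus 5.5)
Your derivation of the formula follows the paper's argument: spread out, apply Corollary~\ref{cor: dsignature for finite subring} on the closed fibers, make $\q\mapsto\asn{S(\q)}{\omega_{R(\q)}}$ constant near the generic point via Corollary~\ref{cor: semicontinuity in families}, and identify the generic value via Proposition~\ref{prop: asn under flat extension}. So does your implication that normality of $R$ gives $\asn{S}{\omega_R}\geq 1$ (trace splitting, then $S\cong\omega_S\cong\Hom_R(S,\omega_R)\twoheadrightarrow\omega_R$).

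The ``main obstacle'' in your optional re-proof is not real. The inequality ``generic value $\geq$ value at $\q$'' is proved pointwise in that corollary. One lifts an optimal surjection at $\q$ using base change of $\Hom$ on $U$, applies Nakayama over the localization of $A_R$ at $\q A_R+(x_1,\ldots,x_n)$, and then localizes to the generic point. So no uniform bound and no noncommutative-rank constancy argument are needed.

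The genuine gap is the implication $\asn{S}{\omega_R}>0\Rightarrow R$ normal. Your $S_2$ remark is vacuous, since $R$ is Cohen--Macaulay. Everything therefore rests on the claim that an $R_\p$-linear surjection $S_\p^{\oplus m}\twoheadrightarrow\omega_{R_\p}$ gives $\omega_{R_\p}$ an $S_\p$-module structure. That claim is false: the kernel is only an $R_\p$-submodule. Let $K$ denote the fraction field of $R$. A compatible $S_\p$-action on the rank-one module $\omega_{R_\p}$ would make $\omega_{R_\p}\otimes_R K$ a vector space over the fraction field of $S$, forcing $[S:R]=1$. Already $R=\CC[x]_{(x)}\subseteq S=\CC[y]_{(y)}$ with $x=y^2$, together with the projection $S=R\oplus Ry\twoheadrightarrow R=\omega_R$, illustrates this.

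The missing idea is that only the elements of $S$ lying in $K$ commute with $\phi$. Extend $\phi\colon S^{\oplus m}\twoheadrightarrow\omega_R$ $K$-linearly. Take $x\in\overline{R}=K\cap S$, which is the normalization of $R$ because $S$ is normal. Then $x\phi(s)=\phi(xs)\in\phi(S^{\oplus m})$, so $x\omega_R\subseteq\omega_R$ inside $\omega_R\otimes_R K$, and therefore $x\in\Hom_R(\omega_R,\omega_R)=R$. This proves normality directly, with no localization at height-one primes and no appeal to Serre's criterion.
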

\begin{proof}
    By preceding spreading arguments,
    after first shrinking $\Spec(C)$ by inverting elements to get a dense open subset $U$, the chosen models give module-finite inclusions $R(\q) \subseteq S(\q)$, with $R(\q)$ reduced and Cohen-Macaulay, and $S(\q)$ regular semi-local and torsion-free of constant generic rank $[S:R]$ over 
     $R(\q)$ for all $\q \in U$. 
     As in our setup for reduction mod $p$ above, we may assume that $C$ is regular and $A_R$ is flat over $C$ with Cohen-Macaulay fibers of pure dimension $\dim(R)$. The relative canonical module $\omega_{A_R/C}$ therefore specializes, after localization, to $\omega_{R(\q)}$, e.g., by \cite[Theorem~3.6.1]{ConradBook}.

    By Corollary~\ref{cor: semicontinuity in families}, after shrinking $U$ further, we may assume that $\q \mapsto \asn{S(\q)}{\omega_{R(\q)}}$ is constant and equal to its value at the generic fiber. Since Proposition~\ref{prop: asn under flat extension} identifies this generic value with $\asn{S}{\omega_R}$, Corollary~\ref{cor: dsignature for finite subring} gives that
    \[
        \dsig(R(\q)) = \frac{\asn{S(\q)}{\omega_{R(\q)}}}{[S:R]} = \frac{\asn{S}{\omega_R}}{[S:R]}
    \]
    for every closed point $\q \in U$.

    If $R$ is normal, then the inclusion $R \subseteq S$ splits. Thus,  
    applying $\Hom_R(\blank,\omega_R)$ gives a surjection $S \cong \omega_S \cong \Hom_R(S,\omega_R) \twoheadrightarrow \omega_R$. It follows that $\asn{S}{\omega_R} \geq 1$, giving the lower bound.

    Conversely, suppose that $\asn{S}{\omega_R} > 0$, so that there is a surjection $\phi \colon S^{\oplus m} \to \omega_R$ for some $m \in \ZZ_{> 0}$. Let $K=\operatorname{Frac}(R)$ and $\overline{R}=K\cap S$, the normalization of $R$, and identify $\phi$ with its extension after applying $\blank \otimes_R K$. For any $x \in \overline{R}$, we have
    \[
    x \omega_R = x \phi(S^{\oplus m}) = \phi(xS^{\oplus m}) \subseteq \phi(S^{\oplus m}) = \omega_R,
    \]
    so that $x \in \Hom_R(\omega_R,\omega_R) = R$, showing normality.
\end{proof}

{

\begin{corollary}
Let $(R,\m) \subseteq (S,\n)$ be a module-finite local extension of normal essentially of finite type $\CC$-algebras. Choose simultaneous models $A_R \subseteq A_S$ over a finitely generated $\ZZ$-algebra $C$, as above.

Suppose that there exists $\varepsilon > 0$ and a dense open subset $U \subseteq \Spec(C)$ such that
\[
    \dsig\bigl(S(\q)_{\n_{\q,i}}\bigr)\geq\varepsilon
\]
for every closed point $\q \in U$ and every maximal ideal $\n_{\q,i}$ of $S(\q)$ (e.g. $S$ is regular with $\varepsilon=1$). Then, after further shrinking $U$, we have
\[
    \dsig(R(\q))\geq\frac{\varepsilon}{[S:R]}>0
\]
for every closed point $\q\in U$. In particular, if $R / \m = \CC$ and we may take our models with $C = \ZZ[1/N]$, then $S(p)$ is local for all sufficiently large primes $p$ and
\[
\liminf_{p \to \infty} \dsig \bigl(S(p)\bigr) > 0 \qquad \mbox{ implies } \qquad \liminf_{p \to \infty} \dsig \bigl(R(p)\bigr) > 0.
\]
\end{corollary}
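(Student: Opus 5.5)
The approach is to run the asymptotic-surjective-number version of the transformation inequality (the Remark after Proposition~\ref{prop: sep transf ineq}), not the Hilbert--Kunz argument. The Hilbert--Kunz argument needs local domains, whereas $S(\q)$ is only semi-local here. The key input is a surjection $\omega_{S(\q)}\twoheadrightarrow\omega_{R(\q)}$ of $R(\q)$-modules. Composing it with Frobenius surjections on the $S(\q)$ side, and then using the rank formula of Corollary~\ref{cor: rank formula}, should give the bound.

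\textbf{Step 1: spreading out.} Shrink $U$ so that for every closed $\q\in U$ the following hold:
\begin{itemize}
\item $R(\q)\subseteq S(\q)$ is a module-finite inclusion.
\item $R(\q)$ is a normal Cohen--Macaulay local domain, and $S(\q)$ is torsion-free of constant generic rank $n=[S:R]$ over it.
\item The relative canonical modules specialize to $\omega_{R(\q)}$ and $\omega_{S(\q)}$.
\end{itemize}
Since $R$ is normal in characteristic $0$, the inclusion $R\subseteq S$ splits via $\frac1n\mathrm{Tr}$. Applying $\Hom_R(\blank,\omega_R)$ gives a surjection $\omega_S\cong\Hom_R(S,\omega_R)\twoheadrightarrow\omega_R$. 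This surjection, together with the isomorphism $\Hom_{R}(S,\omega_R)\cong\omega_S$, spreads to the model. After shrinking $U$ we get surjections $\omega_{S(\q)}\twoheadrightarrow\omega_{R(\q)}$ of $R(\q)$-modules.

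\textbf{Step 2: Frobenius surjections on the $S(\q)$ side.} Closed points $\q$ have finite residue fields, so $R(\q)$ and all the $S(\q)_{\n_{\q,i}}$ are F-finite with perfect residue fields. They all have the same dimension $d$, so $\delta(R(\q))=\delta(S(\q)_{\n_{\q,i}})=p^d$. The case $d=0$ is trivial, so assume $\delta>1$. Write $S_i=S(\q)_{\n_{\q,i}}$. By hypothesis and the existence of the limit, for $e\gg0$ there are surjections $F^e_*\omega_{S_i}\twoheadrightarrow\omega_{S_i}^{\oplus b_{i,e}}$ with $b_{i,e}\geq(\varepsilon-o(1))p^{ed}$. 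Since $\omega_{S(\q)}=\bigoplus_i\omega_{S_i}$ and $F^e_*$ commutes with this decomposition, taking $b_e=\min_i b_{i,e}$ gives an $R(\q)$-linear surjection
\[
F^e_*\omega_{S(\q)}\twoheadrightarrow \omega_{S(\q)}^{\oplus b_e}\twoheadrightarrow\omega_{R(\q)}^{\oplus b_e}.
\]
Here $F^e_*$ over $S$ and over $R$ agree as $R$-modules. Consequently $\basegdsig{\omega_{S(\q)}}{\omega_{R(\q)}}{R(\q)}\geq\varepsilon$.

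\textbf{Step 3: rank formula.} As an $R(\q)$-module, $\omega_{S(\q)}$ has generic rank $n$ and $\omega_{R(\q)}$ has rank one. Corollary~\ref{cor: rank formula} then gives
\[
n\,\dsig(R(\q))=n\,\gdsig{\omega_{R(\q)}}{\omega_{R(\q)}}=\gdsig{\omega_{S(\q)}}{\omega_{R(\q)}}\geq\varepsilon,
\]
which is the main inequality. The step I expect to need the most care is Step 1: ensuring that the splitting-induced surjection onto the canonical module, and the identification of canonical modules of the fibers, survive specialization uniformly on a dense open set. I would handle this with \cite[Theorem~3.6.1]{ConradBook} and generic freeness of the cokernel, as in Corollary~\ref{cor: reduction mod p with asn}.

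\textbf{Final assertion.} Suppose $R/\m=\CC$ and $C=\ZZ[1/N]$. Then $S/\n$ is finite over $\CC$, hence equal to $\CC$. So $\n$ is spread out by a section, and for large $p$ the fiber $S(p)/\m S(p)$ has a unique maximal ideal, i.e.\ $S(p)$ is local. If $\liminf_p\dsig(S(p))>0$, choose $\varepsilon>0$ below this liminf. Then $\dsig(S(p))\geq\varepsilon$ for all $p\gg0$, and these primes form a dense open subset of $\Spec\ZZ[1/N]$. The first part now yields $\dsig(R(p))\geq\varepsilon/[S:R]$ for $p\gg0$.
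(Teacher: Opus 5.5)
Your route differs from the paper's. The paper first shows $R(\q)$ is F-rational by a detour through characteristic zero:
\begin{itemize}
\item Smith: $S'$ has rational singularities.
\item Boutot: so does its direct summand $R'$.
\item Hara: $R(\q)$ is F-rational on a dense open set.
\end{itemize}
It then completes, writes $\widehat{S(\q)}$ as the product of the $\widehat{S(\q)_{\n_{\q,i}}}$, each a module-finite domain extension of $\widehat{R(\q)}$ of degree $n_i\le n$, and applies the Hilbert--Kunz inequality of Proposition~\ref{prop: sep transf ineq} to a single factor. You instead spread out the trace splitting. This directly supplies the surjection $\omega_{S(\q)}\twoheadrightarrow\omega_{R(\q)}$ that the Remark after Proposition~\ref{prop: sep transf ineq} extracts from F-rationality, and you finish with Corollary~\ref{cor: rank formula}. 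This avoids Smith, Boutot and Hara altogether, and F-rationality of $R(\q)$ becomes a consequence rather than an input. Two steps, however, need repair.

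\textbf{The decomposition in Step~2 is false.} The identity $\omega_{S(\q)}=\bigoplus_i\omega_{S_i}$ does not hold. $S(\q)$ is semilocal but in general not a product of local rings; it can be a domain with several maximal ideals. The localizations $\omega_{S_i}$ are not even finitely generated over $S(\q)$. You can fix this in either of two ways.
\begin{itemize}
\item Complete first, as the paper does. Then $S(\q)\otimes_{R(\q)}\widehat{R(\q)}\cong\prod_i\widehat{S_i}$, dual F-signature is unchanged by completion, and your surjection onto $\omega_{R(\q)}$ completes.
\item Glue the local surjections. Pick $\Phi_i\in\Hom_{S(\q)}(F^e_*\omega_{S(\q)},\omega_{S(\q)}^{\oplus b_e})$ whose localization at $\n_{\q,i}$ is surjective. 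Choose $u_i\in S(\q)$ with $u_i\equiv 1$ modulo $\n_{\q,i}$ and $u_i\in\n_{\q,j}$ for $j\neq i$. Then $\Phi=\sum_i u_i\Phi_i$ satisfies $\Phi\otimes\kappa(\n_{\q,j})=\Phi_j\otimes\kappa(\n_{\q,j})$ for every $j$, so $\Phi$ is surjective by Nakayama.
\end{itemize}

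\textbf{The Cohen--Macaulay claim in Step~1 is unjustified.} You assert that $R(\q)$ is Cohen--Macaulay, but $R$ is only assumed normal, so this does not follow from spreading out. Without it, $\dsig(R(\q))=0$ by the paper's convention, the identification $\dsig(R(\q))=\gdsig{\omega_{R(\q)}}{\omega_{R(\q)}}$ is unavailable, and Step~3 proves nothing. In the paper this is absorbed into the rational singularities of $R'$.

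Within your approach there is an elementary fix. Since $\dsig(S_i)\geq\varepsilon>0$, each $S_i$ is Cohen--Macaulay of dimension $d$. Hence the local cohomology of $S(\q)$ at the maximal ideal of $R(\q)$ vanishes below degree $d$. The spread-out splitting makes $R(\q)$ an $R(\q)$-module direct summand of $S(\q)$, so the same vanishing holds for $R(\q)$.

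For the same reason, spread out only the $R$-linear splitting $S\to R$. Then take $\omega_{S(\q)}\cong\Hom_{R(\q)}(S(\q),\omega_{R(\q)})$ directly on each fiber, rather than specializing relative canonical modules, which in the paper's setup presupposes Cohen--Macaulay fibers.

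With these two changes your argument goes through. The final assertion is handled as in the paper.
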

\begin{proof}
    Put $n = [S:R] = [S':R']$ as in the setup for reduction mod $p$ above. We have that $S'$ has $F$-rational type, hence rational singularities by \cite{Smith2}. Since $R'$ is normal, $R' \subseteq S'$ is split by the normalized trace, so $R'$ also has rational singularities by \cite{Boutot}. Thus, after possibly shrinking $U$, the rings $R(\q)$ are $F$-rational for all closed $\q \in U$ by \cite{Hara}. Shrinking $U$ further, we may also assume that $R(\q)\subseteq S(\q)$ is a module-finite inclusion with $S(\q)$ torsion-free of rank $n$ over $R(\q)$.

    Fix a closed point $\q \in U$, and suppose $\n_{\q,1},\ldots,\n_{\q,t}$ are the maximal ideals of $S(\q)$. Then
    \[
    \widehat{S(\q)} \cong S(\q) \otimes_{R(\q)} \widehat{R(\q)} \cong \prod_{i=1}^t \widehat{S(\q)_{\n_{\q,i}}}.
    \]
    As the rings $R(\q)$ and $S(\q)_{\n_{\q,i}}$ are excellent and $F$-rational, so too are their completions. Hence each factor $\widehat{S(\q)_{\n_{\q,i}}}$ in the product above is a module-finite extension domain of $\widehat{R(\q)}$, say of degree $n_i$. Since $n = \sum_{i=1}^t n_i$, we have $n_i \leq n$ for any fixed $1 \leq i \leq t$.  Using
    \autoref{prop: sep transf ineq} and invariance under completion \cite[Corollary~3.7]{SmirnovTucker} gives
    \[
    \dsig(R(\q))
    =\dsig(\widehat{R(\q)}) \geq\frac{\dsig(\widehat{S(\q)_{\n_{\q,i}}})}{n_i}
    = \frac{\dsig(S(\q)_{\n_{\q,i}})}{n_i}
    \geq\frac{\varepsilon}{n}.
\]
For the final assertion, $S/\m S$ is an Artinian
local $\CC$-algebra with residue field $\CC$.
Spreading out its nilpotent maximal ideal shows
that $S(p)$ is local for all sufficiently large $p$.
Choosing
$0<\varepsilon<\liminf_{p\to\infty}\dsig(S(p))$,
the above bound gives the desired conclusion,
since shrinking $\Spec(\ZZ[1/N])$ excludes only
finitely many primes.
\end{proof}
}

The corresponding result for F-signature follows from \cite[Corollary~4.13]{Tucker}. This was recently improved by 
Takagi and Yamaguchi \cite[Theorem~1.3]{TakagiYamaguchi} who assume that $R \to S$ is pure and $S$ is of klt type, but not necessarily module-finite. We do not know if the corresponding statement is true for dual F-signature.

\subsection{Global properties}
We will now generalize \cite[Theorem~5.10]{SmirnovTucker} starting with a necessary uniform convergence lemma. 

\begin{lemma}\label{lem: global uniform convergence}
	Let $R$ be a Noetherian locally equidimensional F-finite ring and $M, N$ be finitely generated $R$-modules. 
	Then there exists a constant $C \geq 0$ such that for every $\p \in \Supp (N)$ 
	\[
	\left |[\kappa(\p)^{1/p^e}: \kappa(\p)]^{-1} p^{-e\dim (R_\p)} \max \{n \mid F_*^e M_\p \twoheadrightarrow  N_\p^{\oplus n}\}  -  \gdsig{M_\p}{N_\p} \right | \leq Cp^{-e}. 
	\]
\end{lemma}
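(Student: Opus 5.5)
We reduce the global uniform bound to the local one by making the constant in Corollary~\ref{cor: existence of Z signature} (and hence in Theorem~\ref{thm: surjections through F-signature}) uniform over $\Spec(R)$. The local estimate has two sources of error. The first is the term $(\dim_{\kappa(\p)} N_\p/\p N_\p)^4$ from Corollary~\ref{cor: Z formula for surjections}. Since $N$ is finitely generated, $\mu(N_\p)\le \mu(N)$, so this term is bounded by $\mu(N)^4$, and after dividing by $\delta(R_\p)^e \ge p^{e\dim R_\p}$ it contributes at most $\mu(N)^4 p^{-e}$ whenever $\dim R_\p\ge 1$. The second source is the constant $C$ coming from Lemma~\ref{lem: comparison generically equal}, which ultimately depends on the cokernel $T$ of a comparison map $L\to F_*M$ (or $F_*^{e_0+1}M \leftrightarrow F_*^{e_0}M^{\oplus \delta}$) that is an isomorphism at minimal primes. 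The rest of the plan is to make this second source uniform.

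First, choose $e_0$ globally so that $F_*^{e_0}M$ is an $\red{R}$-module; this is possible because $R$ is Noetherian. Next, use local equidimensionality together with Kunz's formula: $\delta(R_\p)$ is locally constant on the components of $\Spec R$ passing through $\p$. On each connected piece, pick a global map $\alpha\colon (F^{e_0}_*M)^{\oplus \delta} \to F^{e_0+1}_*M$, up to twisting by rank-balancing, that is an isomorphism at all minimal primes; equidimensionality ensures that localizing at $\p$ gives an isomorphism at $\Minh(R_\p)=\Min(R_\p)$. Do the same with the roles reversed. This yields finitely generated global cokernels $T$ with $\dim T_\p<\dim R_\p$ for every $\p$. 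To handle the possibility that $\delta$ is not a single value, first restrict to the finitely many connected components, or use a filtration by open sets on which the generic ranks of $F_*M$ are constant.

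The \textbf{main obstacle} is a uniform Hilbert--Kunz bound
\[
\length_{R_\p}\bigl(T_\p/\p^{[p^e]}T_\p\bigr)\le C_T\, p^{e(\dim R_\p-1)}
\]
holding for all $\p$ simultaneously. For this we invoke the uniform Hilbert--Kunz estimates of Polstra--Tucker \cite{PolstraTucker} (their Lemma~2.2/3.x style bounds), which give such a $C_T$ via a prime filtration of $T$ together with the uniform bound $\length(R_\p/\p^{[p^e]}) \le C p^{e\dim R_\p}$ on F-finite rings. Normalizing by $[\kappa(\p)^{1/p^e}:\kappa(\p)]$, we then rerun the telescoping argument of Corollary~\ref{cor: existence of Z signature} with this uniform constant. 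Finitely many small $e$ are handled by the trivial bound $\dim (F^e_*M_\p/\p F^e_*M_\p)\le \mu(F^e_*M)$, after normalization.

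Finally, the cases $\dim R_\p=0$ remain, where $\delta(R_\p)$ may equal $1$. At such $\p$ we have $\kappa(\p)=R_\p/\p R_\p$, and we use Lemma~\ref{lem: F-surjective signature for artinian} directly. In that situation the sequence is exactly floor$/[\kappa:\kappa^{p^e}]$ of a quantity bounded by $\length(M_\p)/\dim N_\p$, and the error is at most $1/[\kappa(\p):\kappa(\p)^{p^e}]$. This error is either $0$, in the perfect eventually-constant case once $e\ge e_0$, which we can take uniform, or at most $p^{-e}$. Combining all cases gives a single constant $C$.
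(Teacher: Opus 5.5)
Your proposal is correct and follows essentially the same route as the paper: restrict to connected components, where Kunz's result makes $\delta$ constant; dispose of $\delta=1$ via Lemma~\ref{lem: F-surjective signature for artinian}; bound $\mu(N_\p)$ uniformly; and make the constant of Lemma~\ref{lem: comparison generically equal} uniform using a global sequence $F_*^{e_0+1}M \to (F_*^{e_0}M)^{\oplus\delta} \to T \to 0$ (and its reverse) before rerunning the telescoping argument. The only real difference is where the uniform Hilbert--Kunz bound for $T_\p$ comes from: you derive it from a bound for rings via a prime filtration and attribute it to Polstra--Tucker, whereas the paper applies \cite[Proposition~3.3]{Polstra} directly to $T$, so your prime-filtration reduction is not needed.
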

\begin{proof}
	It suffices to prove the statement on each connected component of $\Spec (R)$.
	By \cite[Corollary~2.7]{Kunz2} the number $\delta \coloneqq [\kappa(\p)^{1/p}: \kappa(\p)]p^{\dim (R_\p)}$ is independent of $\p$ within one connected component. 
	If $\delta = 1$, the assertion follows from Lemma~\ref{lem: F-surjective signature for artinian}. Hence we assume that $\delta > 1$. 
	
 	Since $N$ is finitely generated, the minimal local number of generators function 
    \[
    \p \mapsto \dim_{\kappa(\p)} (N \otimes_R \kappa(\p)),
    \] 
    where $\kappa(\p)$ is the residue field of $\p$, 
    is bounded.
    It follows from Corollary~\ref{cor: Z formula for surjections}, that the convergence rate of $\delta^{-e} b_e(\p)$ is as good as the convergence rate of the function
    \[
    \p \mapsto \delta^{-e}\inf \left \{ \frac{\dim_{\kappa(\p)} ((F_*^e M_\p)/\Z_e^{M_\p}(V))}{\dim_{\kappa(\p)} N_\p/V} \mid \p N_\p \subseteq V \subsetneq N_\p \right \}. 
    \]
    It remains to explain that convergence in Corollary~\ref{cor: change limits in Z signature} can be made uniform on $\Spec (R)$. 

    First, we note that the constant $C$ given by Lemma~\ref{lem: comparison generically equal} can be independent of  $\p \in \Spec (R)$
    provided that we start with a global exact sequence 
    $L \to M \to T \to 0$ and apply
    \cite[Proposition~3.3]{Polstra}
    to bound $C_{T_\p}$ uniformly in the proof of Lemma~\ref{lem: comparison generically equal}. We obtain a suitable sequence by 
    taking $e_0$ such that $F_*^{e_0} M$ is an $\red{R}$-module, then 
    there is an exact sequence 
    \[
    F_*^{e_0 + 1} M \to F_*^{e_0} M^{\oplus \delta} \to T \to 0
    \]
    suitable for Lemma~\ref{lem: comparison generically equal}.
    The uniform constant $C$ then can be plugged 
    in (\ref{eq: bound in the proof of uniform convergence}) 
    to deduce a uniform version of Corollary~\ref{cor: existence of Z signature}: there exists a constant $C'$ such that 
    for any $\p \in \Spec (R)$, any submodule $\p N_\p \subseteq V \subsetneq N_\p$, 
    and all $e \geq e_0 + 1$
    we have 
    \[
    \left |
    \delta^{-e}\dim_{\kappa(\p)} ((F_*^{e} M_\p)/\Z_{e}^{M_\p}(V)) - \Zsig{M_\p} (V)
     \right | 
    < C' p^{-e}.
    \]
    The conclusion follows.
\end{proof}

\begin{theorem}\label{thm: semicontinuity of signature}
    Let $R$ be a Noetherian F-finite ring 
    and $M, N$ be finitely generated $R$-modules.
    If $\Supp (M)$ is locally equidimensional, 
    then the function
    $
    \p \mapsto \gdsig{M_\p}{N_\p}
    $
    is lower semicontinuous on $\Spec(R)$.    
\end{theorem}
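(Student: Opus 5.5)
Write $f(\p) = \gdsig{M_\p}{N_\p}$ and $b_e(\p) = \max\{n \mid F_*^e M_\p \twoheadrightarrow N_\p^{\oplus n}\}$. The plan is the standard one from \cite{PolstraTucker, SmirnovTucker}: a uniform limit of lower semicontinuous functions is lower semicontinuous. We first reduce to a locally equidimensional ring so that Lemma~\ref{lem: global uniform convergence} applies.

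\emph{Reduction to the equidimensional case.} Off $\Supp(N)$ we have $f = \infty$. Off $\Supp(M)$ but inside $\Supp(N)$ we have $f = 0$. The complement of $\Supp(M)$ is open, and where $f$ is infinite it is infinite on an open set, so those points cause no trouble. Replace $M$ by $F_*^{e_0}M$ for $e_0$ large enough that $\sqrt{\Ann M}$ kills it; this changes $f$ only by the locally constant factor $\delta^{e_0}$. Also use Lemma~\ref{lem: trivial vanishing of signature}: on the set where $\sqrt{\Ann(M)}N \ne 0$ locally, $f = 0$. We may then view $M$ and $N$ as modules over $R/\sqrt{\Ann M}$, whose spectrum is $\Supp(M)$ and is locally equidimensional by hypothesis. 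A point to check here is that the normalizations are compatible. For $\p \supseteq \Ann M$, the quantities $\dim R_\p$ and $\dim (R/\Ann M)_\p$ may differ, but then $F_*^eM_\p$ has growth $p^{e\dim M_\p}$. Whenever $\dim M_\p<\dim R_\p$, we get $f(\p)=0$, and this holds on a closed set (up to the easy bookkeeping). So we may assume $R$ is locally equidimensional and reduced with $\Supp M=\Spec R$.

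\emph{Semicontinuity of the finite-level functions.} Fix $e$. On each connected component $\delta$ is constant by \cite[Corollary~2.7]{Kunz2}, so the normalizing factor $[\kappa(\p)^{1/p^e}:\kappa(\p)]p^{e\dim R_\p} = \delta^e$ is locally constant. The function $\p \mapsto b_e(\p)$ is the surjective capacity $\gamma_1(\p)$ for the pair $(F_*^eM, N)$. As in the proof of Proposition~\ref{prop: asn semi}, via Nagata's criterion and the argument of \cite[Lemma~2.2]{PolstraTucker}, it is lower semicontinuous on $\Supp(N)$: a surjection at $\p$ spreads to an open neighborhood, and it localizes to every generization. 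Hence $g_e(\p)=\delta^{-e}b_e(\p)$ is lower semicontinuous on $\Supp(N)$.

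\emph{Passage to the limit.} By Lemma~\ref{lem: global uniform convergence}, $|g_e - f| \le Cp^{-e}$ uniformly on $\Supp(N)$. Given $a$ and $\p$ with $f(\p) > a$, choose $e$ with $2Cp^{-e} < f(\p)-a$. Then $g_e(\p) > a + Cp^{-e}$, and this strict inequality persists on an open neighborhood $U$ of $\p$. On $U\cap\Supp(N)$ we get $f > a$, and outside $\Supp(N)$ we have $f=\infty$. Hence $f^{-1}((a,\infty])$ is open.

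The main obstacle is the reduction step. Lemma~\ref{lem: global uniform convergence} requires $R$ itself to be locally equidimensional, whereas only $\Supp(M)$ is assumed to be. The dimension normalization $p^{e\dim R_\p}$ versus $p^{e\dim M_\p}$ has to be handled carefully: where $M_\p$ is not of full dimension the signature vanishes, and that vanishing set must be shown to be closed. I would handle this by passing to $R/\sqrt{\Ann M}$ and separately checking that $\{\p : \dim M_\p = \dim R_\p\}$ is open in $\Supp(M)$.
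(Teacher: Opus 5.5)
Your argument is sound, but it is organized differently from the paper's.

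\textbf{Your route.} You pass to $\overline{R}=R/J$ with $J=\sqrt{\Ann M}$. There you get lower semicontinuity of the $\overline{R}$-normalized signature from Lemma~\ref{lem: global uniform convergence}, and then you patch back to $R$. The patching requires the mismatch locus
\[
Z=\{\p\in\Supp M \mid \dim M_\p<\dim R_\p\}
\]
to be closed. You assert this but do not prove it.

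\textbf{The missing justification.} The closedness of $Z$ is true, but it needs an argument.
\begin{itemize}
\item $\Supp M$ has finitely many connected components, and on each of them $\delta(\overline{R}_\p)$ is a constant $c$ by \cite[Corollary~2.7]{Kunz2}.
\item Since $\delta(R_\p)/\delta(\overline{R}_\p)=p^{\dim R_\p-\dim M_\p}$, the set $Z$ meets that component in $\{\p \mid \delta(R_\p)>c\}$.
\item This set is closed, because $\p\mapsto\delta(R_\p)=\max\{d_P\mid P\in\Min(R),\ P\subseteq\p\}$ is upper semicontinuous \cite[Corollary~2.5]{PolstraTucker}.
\end{itemize}

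\textbf{A correction.} The same fact fixes your claim that the factor $\delta^{e_0}$ is locally constant. Before the reduction, $R$ need not be equidimensional, and $\delta(R_\p)$ is only upper semicontinuous. This still suffices: $\delta(R_\p)^{-e_0}$ is positive and lower semicontinuous, so lower semicontinuity transfers through the pushforward.

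\textbf{Gluing near $\Supp N\setminus\Supp M$.} The relevant point is that this set lies in the closed set $\Supp(JN)$, where $f=0$. Once you note this, the remaining patching is routine.

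\textbf{The paper's route.} The paper never introduces $Z$.
\begin{itemize}
\item It uses the upper semicontinuity of $\delta(R_\p)$ to see that the $R$-normalized functions $\delta(R_\p)^{-e}b_e(\p)$ are already lower semicontinuous on $\Spec R$.
\item It then shows these functions converge uniformly on $\Supp N\setminus\Supp(JN)$. Where $\dim\overline{R}_\p=\dim R_\p$, this follows from Lemma~\ref{lem: global uniform convergence} over $\overline{R}$. At the remaining points it uses a uniform bound $\delta(R_\p)^{-e}b_e(\p)\le C_Mp^{-e}$.
\end{itemize}

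\textbf{Comparison.} The paper gets a single uniform-limit argument with no gluing, at the cost of a \emph{uniform} vanishing estimate on $Z$. Your version only needs $f=0$ \emph{pointwise} on $Z$, but you must then control the topology of $Z$ and glue the pieces by hand.
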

\begin{proof}
    For brevity, let us define
    $b_e (\p) \coloneqq 
    \max \{n \mid F_*^e M_\p \twoheadrightarrow  N_\p^{\oplus n}\}$. 
    As in the proof of \cite[Theorem~5.10]{SmirnovTucker}, we will use that, for any fixed $e$, $b_e (\p)$ defines a constructible lower semicontinuous function on $\Spec (R)$.
    For $P \in \Min(R)$ put $d_P = [\kappa(P)^{1/p}:\kappa(P)]$.
    By \cite[Corollary~2.5]{PolstraTucker}, applied to $R/P$,
    \[
        \delta(R_\p)=
        \max\{d_P \mid P\in\Min(R),\ P\subseteq\p\}.
    \]
    It follows that $\p \mapsto \delta(R_\p)$ is upper
semicontinuous. Consequently,
$\p \mapsto \delta(R_\p)^{-e}b_e(\p)$ is lower
semicontinuous on $\Supp(N)$, and hence on $\Spec(R)$
(with value $\infty$ outside $\Supp(N)$).

    The case $M=0$ is immediate. For every fixed $e \geq 0$,
    \[
    \gdsig{F^e_*M_\p}{N_\p} = \delta(R_\p)^{e} \gdsig{M_\p}{N_\p}.
    \]
    Since $\delta(R_\p)^{-e}$ is positive and lower
semicontinuous, it suffices to prove the assertion
after a fixed Frobenius pushforward of $M$.

    Let $J = \sqrt{\Ann(M)}$, $\overline{R} = R/J$, and $\overline{N} = N/JN$. By replacing $M$ with $F^{e_0}_*M$ for some fixed $e_0 \gg 0$, we may assume that $M$ is an $\overline{R}$-module.
    By the argument of Lemma~\ref{lem: trivial vanishing of signature}, $b_e(\p) = 0$ for $\p \in \Supp (JN)$ and 
    outside of this set we have 
    \[
    b_e (\p) 
    = \max \{n \mid F_*^e M_\p \twoheadrightarrow  \overline{N}_\p^{\oplus n}\}.
    \]
    The assertion is trivial when $N = 0$, so, at the end, 
    it suffices to show that the convergence 
    \[
    \delta(R_\p)^{-e} b_e(\p) \to \gdsig{M_\p}{N_\p}
    \]
    is uniform for $\p \in V \coloneqq \Supp (N) \setminus \Supp (JN)$.

    Take $\p \in V$.
    We may apply Lemma~\ref{lem: global uniform convergence}
    to find a constant $C \geq 0$ such that 
	\[
	\left |\delta(\overline{R}_\p)^{-e} b_e(\p)  -  \basegdsig{M_\p}{\overline{N}_\p}{\overline{R}} \right | \leq Cp^{-e}. 
	\]    
    The constant $C$ gives uniform convergence 
    whenever $\dim (\overline{R}_\p) = \dim (R_\p)$. 
    Otherwise, by definition, $\gdsig{M_\p}{\overline{N}_\p} = 0$, while, as in the proof of Lemma~\ref{lem: global uniform convergence}, we may bound uniformly
    \[
    \frac{b_e(\p)}{\delta(R_\p)^e}
    \leq \frac{b_e(\p)}{p^{e} \delta(\overline{R}_\p)^{e}}
    \leq \frac{\dim_{\kappa(\p)} (F_*^e M \otimes_{\overline{R}} \kappa(\p))}{p^{e} \delta(\overline{R}_\p)^{e}}
    \leq \frac {C_{M}} {p^e},
    \]
    thus providing a uniform convergence estimate on the entire $V$.    
\end{proof}

As a consequence of uniform convergence and \cite[Theorem~1.1]{Baidya} 
we obtain existence of a ``global'' surjective F-signature, 
generalizing \cite{SmirnovTucker, DSPY}. 
The definition that we give uses, as in the proof of Lemma~\ref{lem: global uniform convergence} that 
$\delta(R) \coloneqq [\kappa(\p)^{1/p}: \kappa(\p)]p^{\dim (R_\p)}$ is independent of the choice of $\p$. 
If $R$ is a domain, then $\delta(R)^e = \rank F_*^e R$.

\begin{corollary}
    Let $R$ be a Noetherian locally equidimensional F-finite ring such that $\Spec(R)$ is connected. 
    Let $M, N$ be finitely generated $R$-modules with $N$ nonzero.
    Then
    the limit
    \[
    \gdsig{M}{N} = \lim_{e \to \infty} \frac{\max \{n \mid \text{there exists a surjection } F_*^e M \twoheadrightarrow N^{\oplus n}\} }{\delta(R)^e}
    \]
    exists and is equal to $\min \{\gdsig{M_\p}{N_\p} \mid \p \in \Spec (R)\}$.
    
    In particular, if $M'$ is a finitely generated $R$-module such that $M_\p \cong M'_\p$ for every $\p \in \Min(R)$, then $\gdsig{M}{N} = \gdsig{M'}{N}$. 
\end{corollary}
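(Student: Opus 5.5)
The plan is to mirror the proof of Proposition~\ref{prop: asn semi}, with the uniform convergence from Lemma~\ref{lem: global uniform convergence} playing the role of Corollary~\ref{cor: asn convergence} and \cite[Theorem~1.1]{Baidya} playing the role of \cite[Theorem~3.9]{Baidya}. Write $b_e = \max\{n \mid F_*^e M \twoheadrightarrow N^{\oplus n}\}$ and let $b_e(\p)$ be its local analogue.

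\textbf{Step 1: reduce to a local minimum.} Since $\Spec(R)$ is connected and $R$ is locally equidimensional, $\delta(R_\p) = \delta(R)$ is independent of $\p$. Lemma~\ref{lem: global uniform convergence} then gives a constant $C$ with
\[
\bigl|\delta(R)^{-e} b_e(\p) - \gdsig{M_\p}{N_\p}\bigr| \le Cp^{-e} \quad \text{for all } \p \in \Supp(N).
\]
Outside $\Supp(N)$ both quantities are $\infty$, so those primes are irrelevant. Uniform convergence lets us exchange the limit with the infimum:
\[
\inf_\p \gdsig{M_\p}{N_\p} = \lim_{e\to\infty} \delta(R)^{-e}\inf_\p b_e(\p).
\]
By Theorem~\ref{thm: semicontinuity of signature} the function $\p\mapsto \gdsig{M_\p}{N_\p}$ is lower semicontinuous, since $\Supp(M)$ is locally equidimensional because $R$ is. A lower semicontinuous function on the quasi-compact set $\Supp(N)$ attains its minimum \cite[Theorem~2.3]{SmirnovTucker}, so the infimum is a minimum.

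\textbf{Step 2: compare global and local surjections.} This is the main obstacle. Local surjections trivially give $b_e \le \inf_\p b_e(\p)$. For the reverse direction I would invoke \cite[Theorem~1.1]{Baidya}, the global version of \cite[Theorem~3.9]{Baidya} used for asymptotic surjective numbers. It says that if $F_*^eM$ surjects locally onto $N_\p^{\oplus n}$ everywhere with enough room, then there is a global surjection; the loss is at most something like $\dim R$ or $\dim \Supp(N)$, or a constant depending only on the local dimension data.

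The concern is that $\dim R$ might be infinite, and a bounded loss is what is needed. If the cited theorem only gives a loss depending on $\p$ through $\dim R_\p$, I would handle this as follows. Take a minimizing prime $\p_0$ and use generic constancy to shrink to finitely many constructible pieces where $b_e$ is controlled. Alternatively, note that Baidya's bound involves the local "excess" $\mu$-type functions, which are bounded because $N$ is finitely generated. Either way, a loss $D$ that is uniform in $e$ suffices, since $D/\delta(R)^e \to 0$ once $\delta(R) > 1$.

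\textbf{Step 3: handle $\delta(R) = 1$ and conclude.} When $\delta(R) = 1$, $R$ is a connected Artinian F-finite ring with perfect residue fields, hence local, and Lemma~\ref{lem: F-surjective signature for artinian} applies directly. Otherwise, dividing the inequalities $\inf_\p b_e(\p) - D \le b_e \le \inf_\p b_e(\p)$ by $\delta(R)^e$ and using Step 1 shows that the limit exists and equals the minimum.

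For the final claim, $M_\p\cong M'_\p$ at minimal primes localizes: for each $\q$, every minimal prime of $R_\q$ lies in $\Minh(R_\q)$ by equidimensionality. Corollary~\ref{cor: determined by minimal primes} then gives $\gdsig{M_\q}{N_\q} = \gdsig{M'_\q}{N_\q}$ for every $\q$, and taking minima over $\q$ finishes the proof.
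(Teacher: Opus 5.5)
Your architecture matches the paper's. The paper's proof is an adaptation of \cite[Theorem~5.11]{SmirnovTucker} built from the uniform convergence in Lemma~\ref{lem: global uniform convergence} and \cite[Theorem~1.1]{Baidya}. The final assertion is handled exactly as you do, via Corollary~\ref{cor: determined by minimal primes} on each $R_\q$. Two steps need repair, though both are easy.

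\textbf{Lower semicontinuity.} Your claim that $\Supp(M)$ is locally equidimensional because $R$ is, is false. For example, take $R=k[x,y,z]$ with $k$ perfect and $M=R/(xy,xz)$: the support is a plane union a line, which is not equidimensional at the origin. So Theorem~\ref{thm: semicontinuity of signature} cannot be cited as you do.

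You do not need it. For each fixed $e$, the function $\p\mapsto\delta(R)^{-e}b_e(\p)$ is lower semicontinuous on $\Supp(N)$; this is the fact used in the proofs of Proposition~\ref{prop: asn semi} and Theorem~\ref{thm: semicontinuity of signature}. Since $\delta(R)$ is constant, your uniform bound $Cp^{-e}$ passes lower semicontinuity to the limit. Concretely, if $\gdsig{M_\p}{N_\p}>a$, pick $e$ with $2Cp^{-e}<\gdsig{M_\p}{N_\p}-a$. Then $\{\q \mid \delta(R)^{-e}b_e(\q)>a+Cp^{-e}\}$ is an open neighborhood of $\p$ on which the signature exceeds $a$. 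Quasi-compactness of $\Supp(N)$ then gives the minimum.

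\textbf{Finite dimension.} Your worry that $\dim R$ might be infinite is unfounded, so the vague workarounds you sketch are unnecessary. Constancy of $\delta(R)=[\kappa(\p)^{1/p}:\kappa(\p)]\,p^{\dim R_\p}$ gives $p^{\dim R_\p}\le\delta(R)$ for every $\p$. Hence $\dim R\le\log_p\delta(R)<\infty$. The Baidya comparison, in the form already used in Proposition~\ref{prop: asn semi} and applied to the source $F^e_*M$, therefore gives a loss of at most $\dim R$, independent of $e$. With these two fixes your argument is complete.
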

\begin{proof}
    An easy adaptation of \cite[Theorem~5.11]{SmirnovTucker}. The last assertion follows by applying Corollary~\ref{cor: determined by minimal primes}
over each $R_\p$ and then taking the minimum.
\end{proof}

\bibliographystyle{alpha}
\bibliography{over}

\end{document}